\newcommand\Exp{\mathbb{E}}
\newcommand\R{\mathbb{R}}
\newcommand\bound{b}
\newtheoremstyle{theoremdd}
  {\topsep}
  {\topsep}
  {\itshape}
  {0pt}
  {\bfseries}
  {. }
  { }
  {\thmname{#1}\thmnumber{ #2}\textnormal{\thmnote{ (#3)}}}
\theoremstyle{theoremdd}
\theoremstyle{remark}
\newtheorem{remark}{Remark}
\theoremstyle{definition}
\newtheorem{definition}{Definition}
\newcommand{\cmark}{\ding{51}}%
\newcommand{\xmark}{\ding{55}}%
\newcommand\SC{\mathcal{S}}
\title{Dynamics of SGD with Stochastic Polyak Stepsizes: Truly Adaptive Variants and Convergence \\ to Exact Solution}
\author{%
  Antonio Orvieto\thanks{Corresponding author: \texttt{antonio.orvieto@inf.ethz.ch}. Part of this work was done while interning at Mila, Universit\'e de Montr\'eal under the supervision of Nicolas Loizou and Simon Lacoste-Julien.} \\
  Department of Computer Science,\\
  ETH Z\"urich
  \And
  Simon Lacoste-Julien\thanks{Canada CIFAR AI Chair} \\
  Mila and DIRO, \\ Universit\'e de Montr\'eal
  \And
  Nicolas Loizou\\
 AMS and MINDS,\\
  Johns Hopkins University
}
\begin{document}

\maketitle

\begin{abstract}
  Recently~\citet{loizou2021stochastic}, proposed and analyzed stochastic gradient descent (SGD) with stochastic Polyak stepsize (SPS). The proposed SPS comes with strong convergence guarantees and competitive performance; however, it has two main drawbacks when it is used in non-over-parameterized regimes: (i) It requires a priori knowledge of the optimal mini-batch losses, which are not available when the interpolation condition is not satisfied (e.g., regularized objectives), and (ii) it guarantees convergence only to a neighborhood of the solution. In this work, we study the dynamics and the convergence properties of SGD equipped with new variants of the stochastic Polyak stepsize and provide solutions to both drawbacks of the original SPS. We first show that a simple modification of the original SPS that uses lower bounds instead of the optimal function values can directly solve issue (i). On the other hand, solving issue (ii) turns out to be more challenging and leads us to valuable insights into the method's behavior. We show that if interpolation is not satisfied, the correlation between SPS and stochastic gradients introduces a bias, which effectively distorts the expectation of the gradient signal near minimizers, leading to non-convergence - even if the stepsize is scaled down during training. To fix this issue, we propose DecSPS, a novel modification of SPS, which guarantees convergence to the exact minimizer - without a priori knowledge of the problem parameters. For strongly-convex optimization problems, DecSPS is the first stochastic adaptive optimization method that converges to the exact solution without restrictive assumptions like bounded iterates/gradients.
\end{abstract}
\section{Introduction}
We consider the stochastic optimization problem:
\begin{equation}
\label{MainProb}
\min_{x\in\mathbb{R}^d} \left[ f(x) = \frac{1}{n} \sum_{i=1}^n f_i(x) \right],
\end{equation}
where each $f_i$ is convex and lower bounded. We denote by $\mathcal{X}^*$ the non-empty set of optimal points $x^*$ of equation~\eqref{MainProb}. We set $f^*:=\min_{x\in\mathbb{R}^d} f(x)$, and $f_i^* := \inf_{x\in\mathbb{R}^d} f_i(x)$. 

In this setting, the algorithm of choice is often Stochastic Gradient Descent~(SGD), i.e. $x^{k+1} =  x^k - \gamma_k \nabla f_{\SC_k}(x^k)$, where $\gamma_k>0$ is the stepsize at iteration $k$, $\SC_k\subseteq[n]$ a random subset of datapoints~(minibatch) with cardinality $B$ sampled independently at each iteration $k$, and $\nabla f_{\SC_k}(x):=\frac{1}{B}\sum_{i \in \SC_k} \nabla f_i(x)$ is the minibatch gradient. \looseness=-1

A careful choice of $\gamma_k$ is crucial for most applications~\citep{bottou2018optimization,goodfellow2016deep}. The simplest option is to pick $\gamma_k$ to be constant over training, with its value inversely proportional to the  Lipschitz constant of the gradient. While this choice yields fast convergence to the neighborhood of a minimizer, two main problems arise: (a) the optimal $\gamma$ depends on (often unknown) problem parameters --- hence often requires heavy tuning ; and (b) it cannot be guaranteed that $\mathcal{X}^*$ is reached in the limit~\citep{ghadimi2013stochastic,gower2019sgd, gower2021sgd}. A simple fix for the last problem is to allow polynomially decreasing stepsizes~(second option)~\citep{nemirovski2009robust}: this choice for $\gamma_k$ often leads to convergence to $\mathcal{X}^*$, but hurts the overall algorithm speed. The third option, which became very popular with the rise of deep learning, is to implement an \textit{adaptive} stepsize. These methods do not commit to a fixed schedule, but instead use the optimization statistics~(e.g. gradient history, cost history) to tune the value of $\gamma_k$ at each iteration. These stepsizes are known to work very well in deep learning~\citep{zhang2019adaptive}, and include Adam~\citep{kingma2014adam}, Adagrad~\citep{duchi2011adaptive}, and RMSprop~\citep{tieleman2012lecture}. 

Ideally, a theoretically grounded adaptive method should yield fast convergence to $\mathcal{X}^*$ without knowledge of problem dependent parameters, such as the gradient Lipshitz constant or the strong convexity constant. As a result, an ideal adaptive method should require very little tuning by the user, while matching the performance of a fine-tuned $\gamma_k$. However, while in practice this is the case for common adaptive methods such as Adam and AdaGrad, the associated convergence rates often rely on strong assumptions --- e.g. that the iterates live on a bounded domain, or that gradients are uniformly bounded in norm~\citep{duchi2011adaptive,ward2019adagrad, vaswani2020adaptive}. While the above assumptions are valid in the constrained setting, they are problematic for problems defined in the whole~$\R^d$. 

A promising new direction in the adaptive stepsizes literature is based on the idea of Polyak stepsizes, introduced by~\citep{polyak1987introduction} in the context of deterministic convex optimization. Recently~\citep{loizou2021stochastic} successfully adapted Polyak stepsizes to the stochastic setting, and provided convergence rates matching fine-tuned SGD --- while the algorithm does not require knowledge of the unknown quantities such as the gradient Lipschitz constant. The results especially shines in the overparameterized strongly convex setting, where linear convergence to $x^*$ is shown. This result is especially important since, under the same assumption, no such rate exists for AdaGrad~(see e.g.~\citep{vaswani2020adaptive} for the latest results) or other adaptive stepsizes. Moreover, the method was shown to work surprisingly well on deep learning problems, without requiring heavy tuning~\citep{loizou2021stochastic}.

Even if the stochastic Polyak stepsize~(SPS)~\citep{loizou2021stochastic} comes with strong convergence guarantees, it has two main drawbacks when it is used in non-over-parameterized regimes: (i)  It requires \textit{a priori} knowledge of the optimal mini-batch losses, which are not often available for big batch sizes or regularized objectives~(see discussion in~\S\ref{sec:background}) and (ii) it guarantees convergence only to a neighborhood of the solution. 
In this work, we study the dynamics and the convergence properties of SGD equipped with new variants of SPS for solving general convex optimization problems. Our new proposed variants provide solutions to both drawbacks of the original SPS. 
 
 \subsection{Background and Technical Preliminaries}
\label{sec:background}
The stepsize proposed by~\citep{loizou2021stochastic} is
\vspace{-3mm}
\begin{equation}
\label{SPSmax}
\tag{SPS$_{\max}$}
\gamma_k = \min \left\{ \frac{f_{\SC_k}(x^k)-f_{\SC_k}^*}{c\|\nabla f_{\SC_k}(x^k)\|^2}, \gamma_b \right\},
\end{equation}
where $\gamma_b, c>0$ are problem-independent constants, $f_{\SC_k} :=\frac{1}{|\SC_k|}\sum_{i\in\SC_k} f_i$, $f_{\SC_k}^* = \inf_{x\in\mathbb{R}^d} f_{\SC_k}(x)$.

\vspace{-2mm}
\paragraph{Dependency on $\boldsymbol{f_{\SC_k}^*}$.} Crucially the algorithm requires knowledge of $f_{\SC_k}^*$ for every realization of the mini-batch $\SC_k$. In the non-regularized overparametrized setting~(e.g. neural networks), $f_{\SC_k}$ is often zero for every subset $\SC$~\citep{zhang2021understanding}. However, this is not the only setting where $f_{\SC}^*$ is computable: e.g., in the regularized logistic loss with batch size 1, it is possible to recover a cheap closed form expression for each $f_{i}^*$~\citep{loizou2021stochastic}. Unfortunately, if the batch-size is bigger than $1$ or the loss becomes more demanding~(e.g. cross-entropy), then \textit{no such closed-form computation is possible}.

\vspace{-3mm}
\paragraph{Rates and comparison with AdaGrad.} In the convex overparametrized setting~(more precisely, under the interpolation condition, i.e. $\exists~x^* \in \mathcal{X}^*$ : 
$\inf_{x\in\mathbb{R}^d} f_\SC(x) = f_\SC(x^*)$ for all $\SC$, see also~\S\ref{sec:SPS}), SPS$_{\max}$ enjoys a convergence speed of $\mathcal{O}(1/k)$, without requiring knowledge of the gradient Lipschitz constant or other problem parameters. Recently,~\citep{vaswani2020adaptive} showed that the same rate can be achieved for AdaGrad in the same setting. However, there is an important difference: the rate of~\citep{vaswani2020adaptive} is technically $\mathcal{O}(dD^2/k)$, where $d$ is the problem dimension and $D^2$ is a global bound on the squared distance to the minimizer, which is assumed to be finite. Not only does SPS$_{\max}$ not have this dimension dependency, which dates back to crucial arguments in the AdaGrad literature~\citep{duchi2011adaptive,levy2018online}, but also does not require bounded iterates. While this assumption is satisfied in the constrained setting, it has no reason to hold in the unconstrained scenario. Unfortunately, this is a common problem of all AdaGrad variants: with the exception of~\citep{xie2020linear}~(which works in a slightly different scenario), no rate can be provided in the stochastic setting without the bounded iterates/gradients~\citep{ene2020adaptive} assumption --- even after assuming strong convexity. However, in the non-interpolated setting, AdaGrad enjoys a convergence guarantee of $\mathcal{O}(1/\sqrt{k})$~(with the bounded iterates assumption). A similar rate does not yet exist for SPS, and our work aims at filling this gap.

\vspace{-1mm}
\subsection{Main Contributions}

As we already mentioned, in the non-interpolated setting SPS$_{\max}$ has the following issues:
\vspace{-2mm}
\begin{enumerate}[leftmargin=44pt]
    \item[\textbf{Issue (1)}:] For $B>1$ (minibatch setting), SPS$_{\max}$ requires the exact knowledge of $f_{\SC}^*$. This is not practical. 
    \item[\textbf{Issue (2)}:] SPS$_{\max}$ guarantees convergence to a neighborhood of the solution. It is not clear how to modify it to yield convergence to the exact minimizer.
\end{enumerate}
\vspace{-2mm}
Having the above two issues in mind, the main contributions of our work (see also Table~\ref{tab:results} for a summary of the main complexity results obtained in this paper) are summarized as follows:
\vspace{-2mm}
\begin{itemize}[leftmargin=*]
    \item In \S\ref{sec:estimation}, we provide a direct solution for Issue (1). We explain how only a lower bound on $f_{\SC}^*$~(trivial if all $f_i$s are non-negative) is required for convergence to a neighborhood of the solution. While this neighborhood is bigger that the one for SPS$_{\max}$, our modified version provides a practical baseline for the solution to the second issue.
    \item We explain why Issue (2) is highly non-trivial and requires an in-depth study of the bias induced by the interaction between gradients and Polyak stepsizes. Namely, we show that simply multiplying the stepsize of SPS$_{\max}$ by $1/\sqrt{k}$ --- which would work for vanilla SGD~\citep{nemirovski2009robust} --- yields a bias in the solution found by SPS~(\S\ref{sec:bias}), regardless of the estimation of $f_{\SC}^*$.
    \item In \S\ref{sec:convergence_decr}, we provide a solution to the problem (Issue (2)) by introducing additional structure --- as well as the fix to Issue (1) ---  into the stepsize. We call the new algorithm \emph{Decreasing SPS}~(DecSPS), and provide a convergence guarantee under the bounded domain assumption --- matching the standard AdaGrad results.
    \item In \S\ref{sec:no_bound} we go one step further and show that, if strong convexity is assumed, iterates are bounded with probability 1 and hence we can remove the bounded iterates assumption. To the best of our knowledge, DecSPS, is the first stochastic adaptive optimization method that converges to the exact solution without assuming strong assumptions like bounded iterates/gradients.
    \item In \S\ref{sec:non-smooth} we provide extensions of our approach to the non-smooth setting.
    \item In \S\ref{sec:exp}, we corroborate our theoretical results with experimental testing.
\end{itemize}
\vspace{-1mm}

\begin{table}[ht]
    \centering
    \scalebox{0.71}{
    \begin{tabular}{l|c|l|c|c|l}
    \toprule
        \textbf{Stepsize} &  \textbf{Citation}& \textbf{Assumptions} & \begin{tabular}{cc} \textbf{No Knowledge} \\ \textbf{of} $\boldsymbol{f_{\SC}^*}$ \end{tabular} & \begin{tabular}{cc} \textbf{Exact} \\ \textbf{Convergence} \end{tabular} &  \textbf{Theorem}\\ 
        \midrule
        SPS$_{\max}$ &\citep{loizou2021stochastic} &convex, smooth &\xmark& \xmark&  Thm. \ref{thm:loizou}\\ 
        \midrule
        SPS$_{\max}^\ell$ & This paper & convex, smooth & \cmark&\xmark  &  Thm. \ref{thm:loizou_bs}, Cor. \ref{cor:loizou_bs_cor}\\ 
        \midrule
        DecSPS & This paper &  convex, smooth, bounded iterates&\cmark &\cmark & Cor.~\ref{cor:sqrt_smooth}, $\mathcal{O}(1/\sqrt{K})$ \\ 
         \cline{2-6}
         & This paper &  strongly-convex, smooth &\cmark & \cmark & Thm.~\ref{thm:no_bound}, $\mathcal{O}(1/\sqrt{K})$ \\ 
        \midrule
        DecSPS-NS & This paper &  convex, bounded iterates/grads&\cmark &\cmark &  Cor.~\ref{cor:sqrt_NS}, $\mathcal{O}(1/\sqrt{K})$ \\ 
        \bottomrule
    \end{tabular}}
    \vspace{4mm}
    \caption{\small Summary of the considered stepsizes and the corresponding theoretical results in the non-interpolated setting. The studied quantity in all Theorems, with respect to which all rates are expressed is $\Exp \left[f(\bar{x}^K)-f(x^*)\right]$, where $\bar{x}^K=\frac{1}{K}\sum_{k=0}^{K-1} x^k$. In addition, for all converging methods, we consider the stepsize scaling factor $c_k = \mathcal{O}(\sqrt{k})$, formally defined in the corresponding sections. For the methods without exact convergence, we show in \S\ref{sec:bias} that any different scaling factor cannot make the algorithm convergent.}
    \label{tab:results}
    \vspace{-3mm}
\end{table}

\vspace{-3.5mm}
\section{Background on Stochastic Polyak Stepsize}
\label{sec:SPS}
\vspace{-2mm}
In this section, we provide a concise overview of the results in~\citep{loizou2021stochastic}, and highlight the main assumptions and open questions.

To start, we remind the reader that problem~\eqref{MainProb} is said to be interpolated if there exists a problem solution $x^*\in\mathcal{X}^*$ such that $\inf_{x\in\mathbb{R}^d} f_i(x) = f_i(x^*)$ for all $i\in[n]$. The degree of interpolation at batch size $B$ can be quantified by the following quantity, introduced by~\citep{loizou2021stochastic} and studied also in~\citep{vaswani2020adaptive,d2021stochastic}: fix a batch size $B$, and let $\SC\subseteq [n]$ with $|\SC| = B$.
\begin{equation}
    \sigma^2_B := \Exp_{\SC}[ f_\SC(x^*)-f_\SC^*] = f(x^*)-\Exp_\SC [ f_\SC^*]
\end{equation}
It is easy to realize that as soon as problem~\eqref{MainProb} is interpolated, then $\sigma^2_B=0$ for each $B\le n$. In addition, note that $\sigma^2_B$ is non-increasing as a function of $B$.

We now comment on the main result from~\citep{loizou2021stochastic}.

\begin{restatable}[Main result of~\citep{loizou2021stochastic}]{thm}{loizou}
\label{thm:loizou}
Let each $f_i$ be $L_i$-smooth convex functions. Then SGD with SPS$_{\max}$, mini-batch size $B$, and $c=1$, converges as:  
$\Exp \left[f(\bar{x}^K)-f(x^*)\right] 
\leq \frac{\|x^0-x^*\|^2}{\alpha \, K} + \frac{2\gamma_{b}\sigma^2_B}{\alpha},$
where $\alpha=\min \left\{\frac{1}{2cL_{\max}},\gamma_{b}\right\}$ and $\bar{x}^K=\frac{1}{K}\sum_{k=0}^{K-1} x^k$. If in addition $f$ is $\mu$-strongly convex, then, for any $c\geq1/2$, SGD with SPS$_{\max}$ converges as:
$\Exp \|x^{k}-x^*\|^2 
\leq \left(1-\mu \alpha \right)^k  \|x^0-x^*\|^2 + \frac{2\gamma_{b} \sigma^2_B }{\mu \alpha},$
where again $\alpha=\min \{\frac{1}{2cL_{\max}},\gamma_{b}\}$ and $L_{\max}=\max \{L_i\}_{i=1}^n$ is the maximum smoothness constant.
\end{restatable}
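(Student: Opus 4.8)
The plan is to control the squared distance to an optimum $\|x^{k+1}-x^*\|^2$ and to exploit three structural facts about $\gamma_k$ that hold for every realisation of $\SC_k$. First, by construction $\gamma_k \le \gamma_b$. Second, since $f_{\SC_k}$ is $L_{\max}$-smooth, the smoothness bound $f_{\SC_k}(x^k)-f_{\SC_k}^* \ge \frac{1}{2L_{\max}}\|\nabla f_{\SC_k}(x^k)\|^2$ forces $\gamma_k \ge \min\{\frac{1}{2cL_{\max}},\gamma_b\} = \alpha$. Third, directly from the definition of the step, $\gamma_k\|\nabla f_{\SC_k}(x^k)\|^2 \le \frac1c(f_{\SC_k}(x^k)-f_{\SC_k}^*)$, and hence the self-bounding inequality $\gamma_k^2\|\nabla f_{\SC_k}(x^k)\|^2 \le \frac{\gamma_k}{c}(f_{\SC_k}(x^k)-f_{\SC_k}^*)$. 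I would establish these three bounds first, since everything else is assembled from them.

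For the convex case I would expand $\|x^{k+1}-x^*\|^2 = \|x^k-x^*\|^2 - 2\gamma_k\langle\nabla f_{\SC_k}(x^k),x^k-x^*\rangle + \gamma_k^2\|\nabla f_{\SC_k}(x^k)\|^2$, bound the last term by the self-bounding inequality, and lower-bound the inner product by convexity, $\langle\nabla f_{\SC_k}(x^k),x^k-x^*\rangle \ge f_{\SC_k}(x^k)-f_{\SC_k}(x^*)$. Writing $D_k := f_{\SC_k}(x^k)-f_{\SC_k}^*\ge 0$ and $E_k := f_{\SC_k}(x^*)-f_{\SC_k}^*\ge 0$ and setting $c=1$, the two $\gamma_k$-terms collapse to $-\gamma_k D_k + 2\gamma_k E_k$. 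Now I would use the sign of each factor: apply $\gamma_k\ge\alpha$ to the favourable term $-\gamma_k D_k \le -\alpha D_k$ and $\gamma_k\le\gamma_b$ to $2\gamma_k E_k \le 2\gamma_b E_k$. Taking the conditional expectation over $\SC_k$, and using $\Exp_{\SC}[E_k]=\sigma_B^2$ together with $\Exp_{\SC}[D_k] = f(x^k)-\Exp_{\SC}[f_{\SC}^*] \ge f(x^k)-f(x^*)$, yields the one-step inequality $\Exp[\|x^{k+1}-x^*\|^2\mid x^k] \le \|x^k-x^*\|^2 - \alpha(f(x^k)-f(x^*)) + 2\gamma_b\sigma_B^2$. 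Telescoping over $k=0,\dots,K-1$, dividing by $\alpha K$, and finishing with Jensen's inequality $f(\bar x^K)\le\frac1K\sum_{k} f(x^k)$ gives the claimed $\mathcal{O}(1/K)$ rate plus the $\frac{2\gamma_b\sigma_B^2}{\alpha}$ neighbourhood term.

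For the strongly convex case I would reuse the same expansion and self-bounding step, but replace plain convexity in the inner-product bound by strong convexity, $\langle\nabla f_{\SC_k}(x^k),x^k-x^*\rangle \ge f_{\SC_k}(x^k)-f_{\SC_k}(x^*) + \frac\mu2\|x^k-x^*\|^2$. The new term contributes $-\mu\gamma_k\|x^k-x^*\|^2 \le -\mu\alpha\|x^k-x^*\|^2$, which is exactly the source of the contraction factor $(1-\mu\alpha)$. The remaining suboptimality term now carries coefficient $2-\tfrac1c\ge0$ whenever $c\ge\tfrac12$, so it is nonpositive and can simply be discarded — this is precisely why the statement holds for every $c\ge\tfrac12$. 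After taking the conditional expectation and using $\Exp_{\SC}[E_k]=\sigma_B^2$, I obtain $\Exp[\|x^{k+1}-x^*\|^2\mid x^k]\le(1-\mu\alpha)\|x^k-x^*\|^2 + 2\gamma_b\sigma_B^2$; unrolling this recursion and summing the geometric series $\sum_{j\ge0}(1-\mu\alpha)^j = \frac{1}{\mu\alpha}$ produces the stated bound.

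The main obstacle, and the reason this is not a routine SGD argument, is the statistical correlation between the adaptive stepsize $\gamma_k$ and the stochastic gradient $\nabla f_{\SC_k}(x^k)$: both depend on the same minibatch $\SC_k$, so one cannot move the expectation through products such as $\gamma_k\langle\nabla f_{\SC_k}(x^k),x^k-x^*\rangle$. The device that circumvents this is to first reduce every $\gamma_k$-weighted quantity to one of definite sign ($D_k,E_k\ge0$, or $\|x^k-x^*\|^2\ge0$) and only then replace the random $\gamma_k$ by the deterministic constants $\alpha$ or $\gamma_b$, deferring the expectation to the very end. Getting this sign bookkeeping right — which pins down the choice $c=1$ in the convex case and the threshold $c\ge\tfrac12$ in the strongly convex case — is the delicate part of the argument.
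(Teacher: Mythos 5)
Your convex-case argument is correct and is essentially the paper's own route: the same expansion, the same self-bounding inequality $\gamma_k^2\|\nabla f_{\SC_k}(x^k)\|^2\le\frac{\gamma_k}{c}(f_{\SC_k}(x^k)-f_{\SC_k}^*)$, the same sign-based replacement of the random $\gamma_k$ by $\alpha$ on the nonnegative term $D_k$ and by $\gamma_b$ on the nonnegative term $E_k$, followed by expectation, telescoping, and Jensen. No issues there.

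The strongly convex case has a genuine gap. You invoke
\begin{equation*}
\langle\nabla f_{\SC_k}(x^k),x^k-x^*\rangle \;\ge\; f_{\SC_k}(x^k)-f_{\SC_k}(x^*) + \tfrac{\mu}{2}\|x^k-x^*\|^2,
\end{equation*}
which is $\mu$-strong convexity of the \emph{minibatch} function $f_{\SC_k}$. The theorem only assumes that the average $f$ is $\mu$-strongly convex while each $f_i$ is merely convex --- the standard situation in, e.g., least squares or regularized problems where individual per-sample losses are degenerate along most directions but the sum is not. Under the stated hypotheses your pointwise inequality is simply false, so the contraction term $-\mu\gamma_k\|x^k-x^*\|^2$ never appears. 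The repair is a reordering of your own "sign first, then expectation" device: keep plain convexity of $f_{\SC_k}$ to conclude that the bracket $\bigl[-\langle\nabla f_{\SC_k}(x^k),x^k-x^*\rangle+f_{\SC_k}(x^k)-f_{\SC_k}(x^*)\bigr]$ is nonpositive, use that sign to replace $\gamma_k$ by its deterministic lower bound $\alpha$ in front of it, \emph{then} take the conditional expectation over $\SC_k$ so the bracket becomes $-\langle\nabla f(x^k),x^k-x^*\rangle+f(x^k)-f(x^*)$, and only at that point apply strong convexity of $f$ to bound it by $-\frac{\mu}{2}\|x^k-x^*\|^2$. This yields the same one-step recursion $\Exp[\|x^{k+1}-x^*\|^2\mid x^k]\le(1-\mu\alpha)\|x^k-x^*\|^2+2\gamma_b\sigma_B^2$ under the correct hypotheses; your unrolling and geometric-series step are then fine. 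As written, your argument proves a weaker theorem (each $f_i$ $\mu$-strongly convex), and even then the constant you obtain would be governed by $\min_i\mu_i$ rather than the strong convexity modulus of $f$.
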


In the overparametrized setting, the result guarantees convergence to the exact minimizer, without knowledge of the gradient Lipschitz constant~(as vanilla SGD would instead require) and without assuming bounded iterates~(in contrast to~\citep{vaswani2020adaptive}). 

As soon as (1) a \textit{regularizer} is applied to the loss~(e.g. $L2$ penalty), or (2) the number of datapoints gets comparable to the dimension, then the problem is not interpolated and SPS$_{\max}$ only converges to a neighborhood and it gets impractical to compute $f_\SC^*$ --- \textit{this is the setting we study in this paper.}

\begin{remark}[What if $\|\nabla f_{\SC_k}\|=0$?]
In the rare case that $\|\nabla f_{S_k}(x^k)\|^2 =0$, there is no need to evaluate the stepsize. In this scenario, the update direction $\nabla f_{S_k}(x^k)=0$ and thus the iterate is not updated irrespective of the choice of step-size. If this happens, the user should simply sample a different minibatch. We note that in our experiments~(see \S\ref{sec:exp}), such event never occurred.
\end{remark}
\vspace{-4mm}
\paragraph{Related work on Polyak stepsize:} The classical Polyak stepsize \citep{polyak1987introduction} has been successfully used in the analysis of deterministic subgradient methods in different settings \citep{boyd2003subgradient, davis2018subgradient, hazan2019revisiting}. First attempts on providing an efficient variant of the stepsize that works well in the stochastic setting were made in \citep{berrada2019training, oberman2019stochastic}. However, as explained in \citep{loizou2021stochastic}, none of these approaches provide a natural stochastic extension with strong theoretical convergence guarantees, and thus \citet{loizou2021stochastic} proposed the stochastic Polyak stepsize SPS$_{\max}$ as a better alternative.\footnote{A variant of SGD with SPS$_{\max}$ was also proposed by~\citet{asi2019stochastic} as a special case of a model-based method called the lower-truncated model. \citet{asi2019stochastic} also proposed a decreasing step-size variant of SPS$_{\max}$ which is closely related but different than the DecSPS that we propose in \S\ref{sec:convergence_decr}. Among some differences, they assume interpolation for their convergence results whereas we do not in~\S\ref{sec:convergence_decr}. We describe the differences between our work and~\citet{asi2019stochastic} in more detail in Appendix~\ref{sec:app_related_work}.} Despite its recent appearance, SPS$_{\max}$ has already been used and analyzed as a stepsize for SGD for solving structured non-convex problems \citep{gower2021sgd}, in combination with other adaptive methods \citep{vaswani2020adaptive}, with a moving target~\citep{gower2021stochastic} and in the update rule of stochastic mirror descent~\citep{d2021stochastic}. These extensions are orthogonal to our approach, and we speculate that our proposed variants can also be used in the above settings. We leave such extensions for future work. 

\vspace{-2mm}
\section[]{Removing $\boldsymbol{f_\SC^*}$ from SPS}
\label{sec:estimation}
\vspace{-3mm}

As motivated in the last sections, computing $f_\SC^*$ in the non-interpolated setting is not practical. In this section, we explore the effect of using a lower bound $\ell_\SC^*\le f_{\SC}^*$ instead in the SPS$_{\max}$ definition.
\begin{tcolorbox}
\vspace{-1mm}
\begin{equation}
\tag{SPS$_{\max}^\ell$}
\gamma_k = \min \left\{ \frac{f_{\SC_k}(x^k)-\ell_{\SC_k}^*}{c\|\nabla f_{\SC_k}(x^k)\|^2}, \gamma_b \right\},
\end{equation}
\end{tcolorbox}
Such a lower bound is easy to get for many problems of interest: indeed, for standard regularized regression and classification tasks, the loss is non-negative hence one can pick $\ell_{\SC}^* = 0$, for any $\SC\subseteq[n]$. 

The obvious question is: what is the effect of estimating $\ell_{\SC}^*$ on the convergence rates in Thm.~\ref{thm:loizou}? We found that the proof of~\citep{loizou2021stochastic} is easy to adapt to this case, by using the following fundamental bound~(see also Lemma~\ref{lemma:basic_loizou}): $\frac{1}{2c L_{\SC_k}}\le \frac{f_{\SC_k}(x^k)-f_{\SC_k}^*}{c\|\nabla f_{\SC_k}(x^k)\|^2} \le\frac{f_{\SC_k}(x^k)-\ell_{\SC_k}^*}{c\|\nabla f_{\SC_k}(x^k)\|^2}$.

The following results can be seen as an easy extension of the main result of~\citep{loizou2021stochastic}, under a newly defined suboptimality measure:
\begin{equation}
\label{sigmahat}
    \hat\sigma_B^2 := \Exp_{\SC_k}[f_{\SC_k}(x^*)-\ell_{\SC_k}^*] = f(x^*)-\Exp_{\SC_k}[\ell_{\SC_k}^*].
\end{equation}

\begin{restatable}[]{thm}{loizoubs}
\label{thm:loizou_bs}
Under SPS$_{\max}^\ell$, the same exact rates in Thm.~\ref{thm:loizou} hold~(under the corresponding assumptions), after replacing $\sigma_B^2$ with $\hat\sigma_B^2$.
\end{restatable}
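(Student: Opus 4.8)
The plan is to re-run the proof of Theorem~\ref{thm:loizou} essentially line by line, isolating the two---and only two---places where the exact optimal batch value $f_{\SC_k}^*$ enters, and checking that replacing it by the lower bound $\ell_{\SC_k}^*$ leaves the argument intact while converting every occurrence of $\sigma_B^2$ into $\hat\sigma_B^2$. First I would start from the one-step SGD identity
\[
\|x^{k+1}-x^*\|^2 = \|x^k-x^*\|^2 - 2\gamma_k\langle \nabla f_{\SC_k}(x^k),\, x^k-x^*\rangle + \gamma_k^2\|\nabla f_{\SC_k}(x^k)\|^2,
\]
and lower bound the inner product by $f_{\SC_k}(x^k)-f_{\SC_k}(x^*)$ through convexity. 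The stepsize then enters only through two inequalities, which I would treat in turn.

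The first is the upper bound $\gamma_k \le \frac{f_{\SC_k}(x^k)-\ell_{\SC_k}^*}{c\|\nabla f_{\SC_k}(x^k)\|^2}$, immediate from the definition of the minimum in SPS$_{\max}^\ell$, which lets me replace $\gamma_k^2\|\nabla f_{\SC_k}(x^k)\|^2$ by $\tfrac{\gamma_k}{c}(f_{\SC_k}(x^k)-\ell_{\SC_k}^*)$; this is the \emph{only} place $\ell_{\SC_k}^*$ appears. Writing $P:=f_{\SC_k}(x^k)-\ell_{\SC_k}^*\ge 0$ and $Q:=f_{\SC_k}(x^*)-\ell_{\SC_k}^*\ge 0$ --- both nonnegative precisely because $\ell_{\SC_k}^*\le f_{\SC_k}^*$ --- and using the identity $f_{\SC_k}(x^k)-f_{\SC_k}(x^*)=P-Q$, the last two terms collapse to $-(2-\tfrac1c)\gamma_k P + 2\gamma_k Q$. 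The second inequality is the lower bound $\gamma_k \ge \alpha = \min\{\tfrac{1}{2cL_{\max}},\gamma_b\}$, and its survival is the only genuinely new step: it follows directly from the fundamental bound recalled before the statement, since
\[
\frac{f_{\SC_k}(x^k)-\ell_{\SC_k}^*}{c\|\nabla f_{\SC_k}(x^k)\|^2}\ \ge\ \frac{f_{\SC_k}(x^k)-f_{\SC_k}^*}{c\|\nabla f_{\SC_k}(x^k)\|^2}\ \ge\ \frac{1}{2cL_{\SC_k}}\ \ge\ \frac{1}{2cL_{\max}}.
\]
Since $2-\tfrac1c\ge 0$ exactly when $c\ge\tfrac12$ (which is $c=1$ in the convex case and the standing hypothesis in the strongly convex case), I would apply $\gamma_k\ge\alpha$ to the first term, whose remaining factor $(2-\tfrac1c)P$ is nonnegative, and $\gamma_k\le\gamma_b$ to the second term, whose factor $2Q$ is nonnegative.

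Taking conditional expectation over $\SC_k$ and then total expectation, and using $\Exp_{\SC_k}[f_{\SC_k}(x^k)]=f(x^k)$ together with $\Exp_{\SC_k}[\ell_{\SC_k}^*]=f(x^*)-\hat\sigma_B^2$ from~\eqref{sigmahat}, the convex case ($c=1$) collapses to the per-step recursion
\[
\Exp\|x^{k+1}-x^*\|^2 \ \le\ \Exp\|x^k-x^*\|^2 - \alpha\,\Exp[f(x^k)-f(x^*)] + 2\gamma_b\hat\sigma_B^2,
\]
after discarding the harmless slack $-\alpha\hat\sigma_B^2$. This is verbatim the recursion underlying Theorem~\ref{thm:loizou} with $\sigma_B^2$ replaced by $\hat\sigma_B^2$, so the convex rate follows by telescoping over $k=0,\dots,K-1$, dividing by $\alpha K$, and applying Jensen's inequality to $\bar{x}^K$; the strongly convex case carries the identical two substitutions through and then invokes $\mu$-strong convexity of $f$ and unrolls the resulting contraction exactly as in~\citep{loizou2021stochastic}. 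The main obstacle is thus conceptual rather than computational: one must confirm that the exactness of $f_{\SC_k}^*$ is never exploited beyond the descent step and the additive noise term, so that the weaker numerator preserves $\gamma_k\ge\alpha$ and merely inflates the additive constant from $\sigma_B^2$ to $\hat\sigma_B^2$, leaving everything downstream untouched.
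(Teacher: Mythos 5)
Your proposal is correct and follows essentially the same route as the paper's proof: the same one-step expansion, the same two uses of the stepsize (the upper bound $\gamma_k^2\|\nabla f_{\SC_k}(x^k)\|^2\le \tfrac{\gamma_k}{c}(f_{\SC_k}(x^k)-\ell_{\SC_k}^*)$ and the lower bound $\gamma_k\ge\alpha$ via $\ell_{\SC_k}^*\le f_{\SC_k}^*$ and smoothness), followed by expectation, telescoping, and Jensen. The only cosmetic difference is bookkeeping: the paper re-introduces $f_{\SC_k}^*$ by adding and subtracting it so the descent term carries $f_{\SC_k}(x^k)-f_{\SC_k}^*$, whereas you keep everything in terms of $\ell_{\SC_k}^*$ and discard the slack $-(2-\tfrac1c)\alpha\,\hat\sigma_B^2$ at the end --- both yield the identical final bound.
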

\vspace{-2mm}
And we also have an easy practical corollary.
\begin{restatable}[]{cor}{loizoubscor}
\label{cor:loizou_bs_cor}
In the context of Thm.~\ref{thm:loizou_bs}, assume all $f_i$s are non-negative and estimate $\ell_{\SC}^* = 0$ for all $\SC\subseteq[n]$. Then the same exact rates in Thm.~\ref{thm:loizou} hold for SPS$_{\max}^\ell$, after replacing $\sigma^2_B$ with $f^*=f(x^*)$.
\end{restatable}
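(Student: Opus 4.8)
The plan is to obtain this corollary as a direct specialization of Theorem~\ref{thm:loizou_bs}, with the only real content being the verification that $\ell_\SC^* = 0$ is an admissible lower bound and the simplification of the constant $\hat\sigma_B^2$ under this choice. First I would check admissibility: since every $f_i$ is non-negative, the minibatch loss $f_\SC = \frac{1}{|\SC|}\sum_{i\in\SC} f_i$ is itself non-negative, so $f_\SC^* = \inf_{x\in\R^d} f_\SC(x) \ge 0 = \ell_\SC^*$ for every $\SC\subseteq[n]$. Hence the constant choice $\ell_\SC^* = 0$ satisfies the defining requirement $\ell_\SC^* \le f_\SC^*$ of SPS$_{\max}^\ell$, and Theorem~\ref{thm:loizou_bs} applies without modification.

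The second step is purely a substitution into the suboptimality measure~\eqref{sigmahat}. Setting $\ell_{\SC_k}^* = 0$ gives $\hat\sigma_B^2 = f(x^*) - \Exp_{\SC_k}[\ell_{\SC_k}^*] = f(x^*) - 0 = f(x^*) = f^*$. Injecting this value of $\hat\sigma_B^2$ into the two rate expressions that Theorem~\ref{thm:loizou_bs} inherits from Theorem~\ref{thm:loizou} — namely the $\mathcal{O}(1/K)$ convex bound and the linear strongly-convex bound — reproduces exactly the claimed statements with $\sigma_B^2$ replaced by $f^*$. No new analysis of the SGD dynamics is needed, since the entire argument is already packaged inside Theorem~\ref{thm:loizou_bs}.

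Consequently there is no substantive obstacle here: the result is a corollary in the literal sense, reducing to a legality check plus a one-line evaluation of a constant. The only point that merits care is ensuring that replacing $f_\SC^*$ by the looser bound $\ell_\SC^* = 0$ does not break any inequality used in the proof of Theorem~\ref{thm:loizou_bs}; in particular the chain $\frac{1}{2cL_{\SC_k}} \le \frac{f_{\SC_k}(x^k)-f_{\SC_k}^*}{c\|\nabla f_{\SC_k}(x^k)\|^2} \le \frac{f_{\SC_k}(x^k)-\ell_{\SC_k}^*}{c\|\nabla f_{\SC_k}(x^k)\|^2}$ only requires $\ell_\SC^* \le f_\SC^*$, which we established above, so the bound remains valid and the stated rates follow.
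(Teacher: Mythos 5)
Your proposal is correct and matches the paper's (implicit) argument exactly: non-negativity of the $f_i$'s makes $\ell_\SC^*=0$ a valid lower bound, and substituting it into the definition of $\hat\sigma_B^2$ in~\eqref{sigmahat} gives $\hat\sigma_B^2 = f(x^*) - 0 = f^*$, so Theorem~\ref{thm:loizou_bs} yields the claim directly. No gaps.
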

\vspace{-2mm}
A numerical illustration of this result can be found in Fig.~\ref{fig:ell_i_plot}. In essence, both theory and experiments confirm that, if interpolation is not satisfied, then we have a linear rate until a convergence ball, where the size is optimal under exact knowledge of $f^*_{\SC}$. Instead, under interpolation, if all the $f_i$s are non-negative and $f^*=0$, then SPS$_{\max}$=SPS$^{\ell}_{\max}$. Finally, in the less common case in practice where $f^*>0$ but we still have interpolation, then SPS$_{\max}$ converges to the exact solution while SPS$^{\ell}_{\max}$ does not. To conclude SPS$^{\ell}_{\max}$ does not (of course) work better than SPS$_{\max}$, but it is a practical variant which we can use as a baseline in \S\ref{sec:convergence_decr} for an adaptive stochastic Polyak stepsize with convergence to the true $x^*$ in the non-interpolated setting.
\begin{figure*}[t]
    \centering
    \textbf{\scriptsize No Interpolation and $\boldsymbol{f^*>0}$  \quad\quad\quad\quad\quad Interpolation and $\boldsymbol{f^*>0}$\quad\quad\quad\quad\quad\quad Interpolation and $\boldsymbol{f^*=0}$}
    \includegraphics[height = 0.21\textwidth]{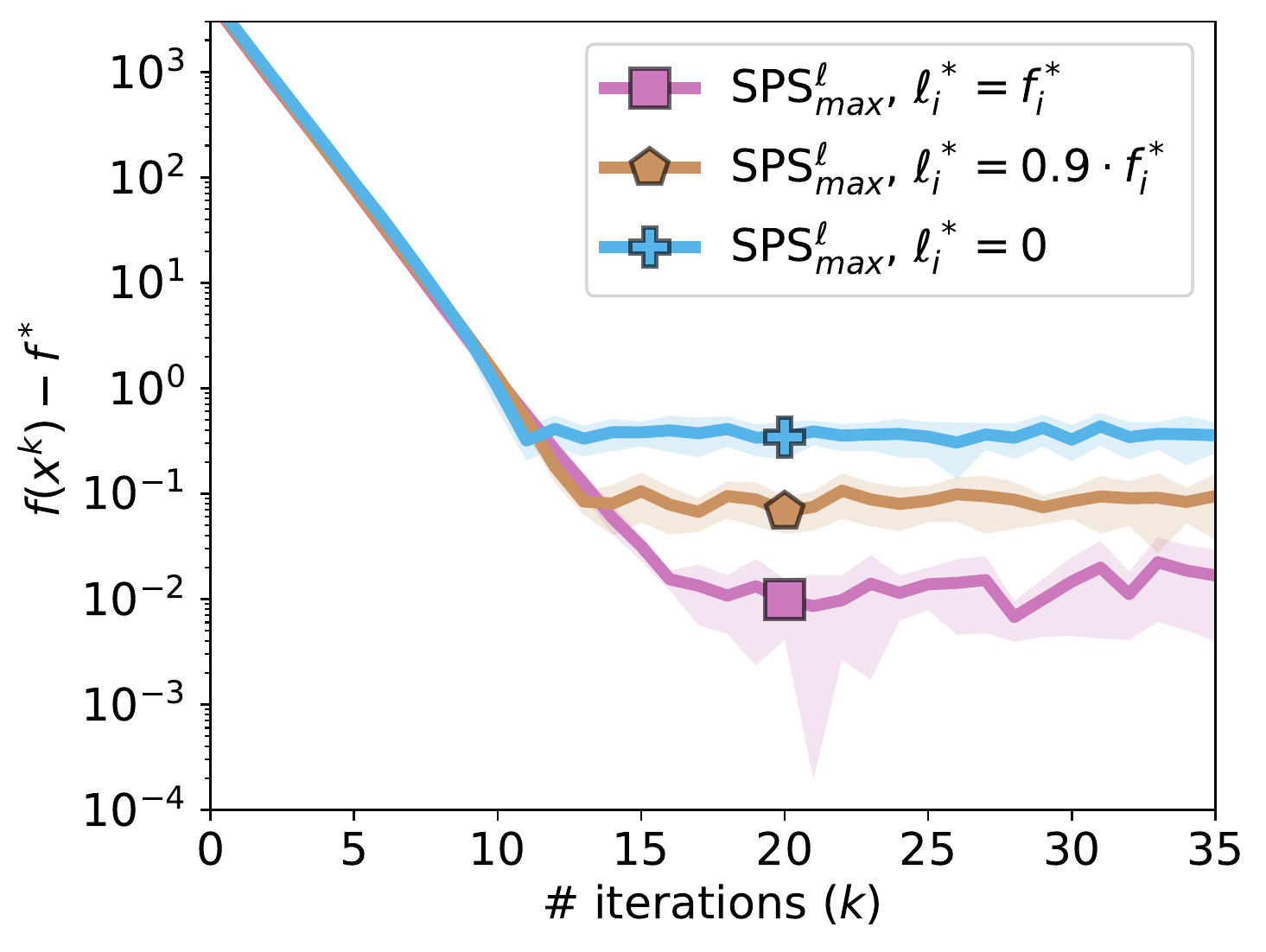}
    \includegraphics[height = 0.21\textwidth]{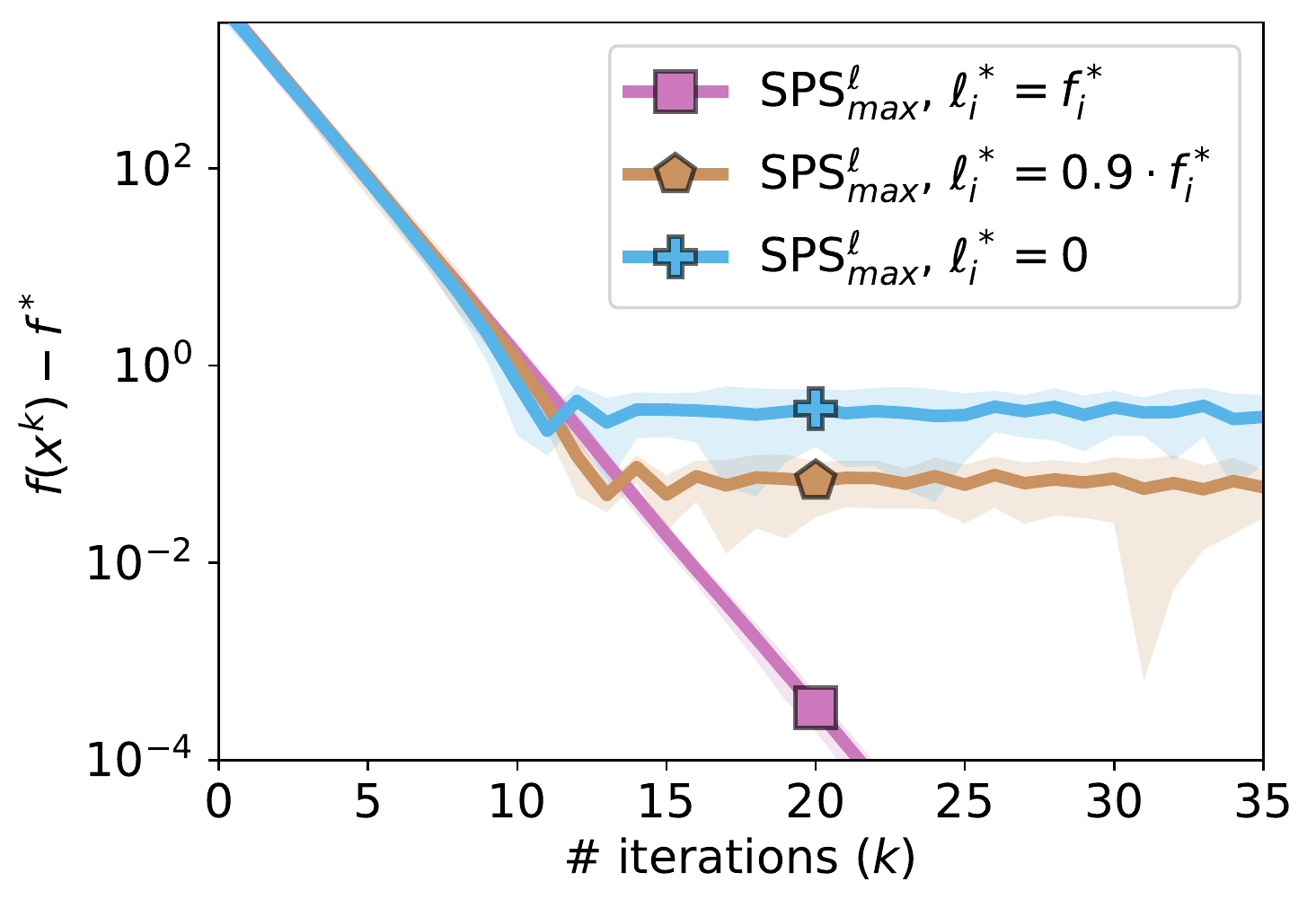}
    \includegraphics[height = 0.21\textwidth]{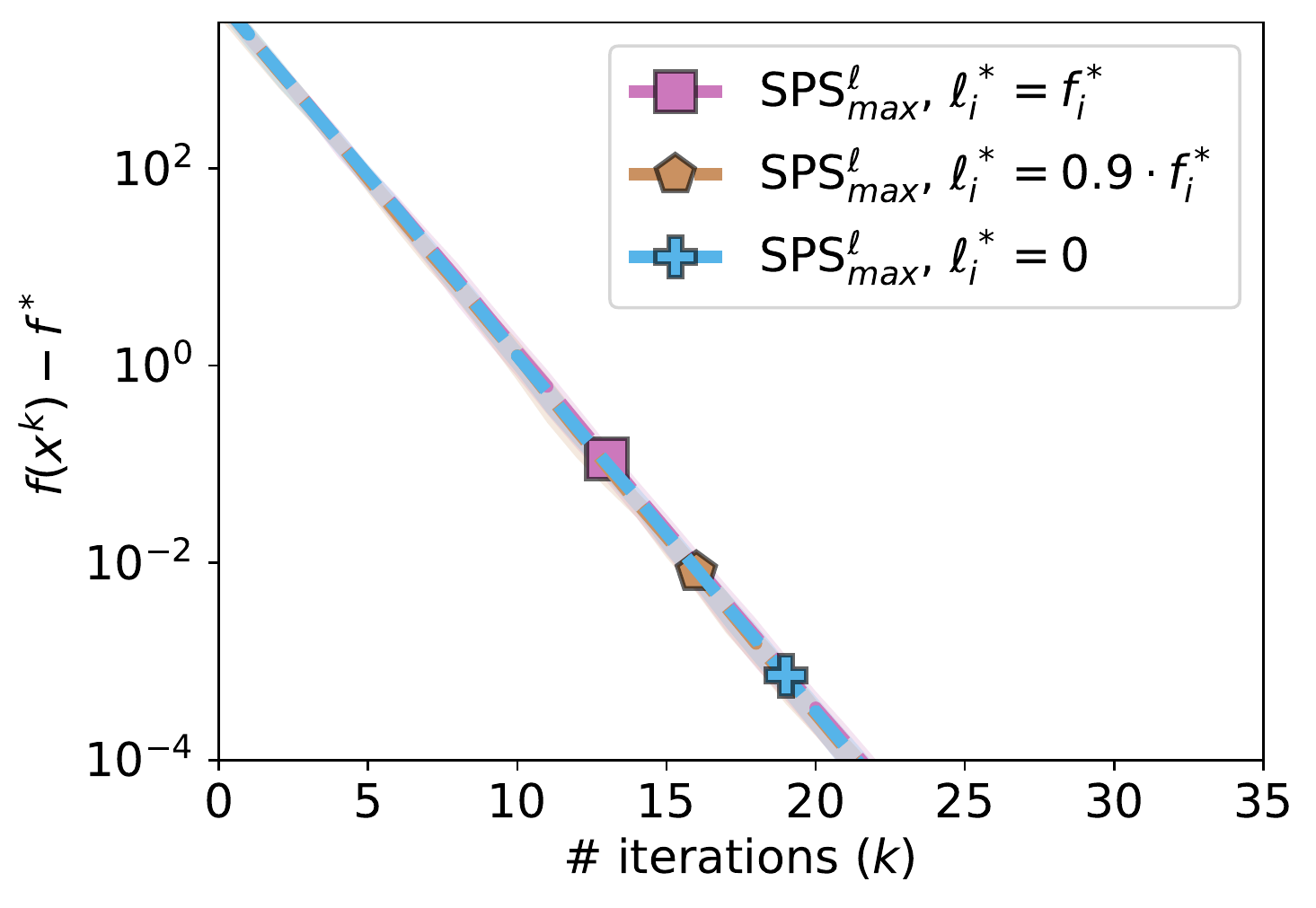}
    \vspace{-3mm}
    \caption{ \small We consider a $100$ dim problem with $n=100$ datapoints where each $f_i = \frac{1}{2}(x-x_i^*)^\top H_i(x-x_i^*) + f_i^*$, with $f_i^*=1$ for all $i\in[n]$ and $H_i$ a random SPD matrix generated using the standard Gaussian matrix $A_i\in\mathbb{R}^{d\times 3d}$ as $H_i = A_iA_i^\top/(3d)$. If $x_i^*\ne x_j^*$ for $i\ne j$, then the problem does \textbf{not satisfy interpolation (left plot)}. Instead, if all $x_i^*$s are equal, \textbf{then the problem is interpolated (central plot)}. The plot shows the behaviour of SPS$^{\ell}_{\max}$~($\gamma_b=2)$ for different choices of the approximated suboptimality $\ell_i^*$. We plot~(mean and std deviation over 10 runs) the function suboptimality level $f(x)-f(x^*)$ for different values of $\ell_i^*$. Note that, if instead $f_i^*=0$ for all $i$ then all the shown \textbf{algorithms coincide~(right plot)} and converge to the solution.}
    \label{fig:ell_i_plot}
    \vspace{-3.5mm}
\end{figure*}

\vspace{-2mm}
\section{Bias in the SPS dynamics.}
\label{sec:bias}
\vspace{-3mm}

In this section, we study convergence of the standard SPS$_{\max}$ in the non-interpolated regime, under an additional~(decreasing) multiplicative factor, in the most ideal setting: batch size 1, and we have knowledge of each $f_i^*$. That is, we consider $\gamma_k =\min\{\frac{f_{i_k}(x^k)-f_{i_k}^*}{c_k\|\nabla f_{i_k}(x^k)\|^2},\gamma_b\}$ with $c_k\to \infty$, e.g. $c_k=\mathcal{O}(\sqrt{k})$ or $c_k=\mathcal{O}(k)$. We note that, in the SGD case, simply picking e.g. $\gamma_k = \gamma_0/\sqrt{k+1}$ would guarantee convergence of $f(x^k)$ to $f(x^*)$, in expectation and with high probability~\citep{kushner2003stochastic,nemirovski2009robust}. Therefore, it is natural to expect a similar behavior for SPS, if $1/c_k$ safisfies the usual Robbins-Monro conditions~\citep{robbins1951stochastic}: $\sum_{k=0}^\infty 1/c_k =\infty, \sum_{k=0}^\infty 1/c_k^2 <\infty$.

\textit{We show that this is not the case}: quite interestingly, $f(x^k)$ converges to a biased solution due to the \textit{correlation between $\nabla f_{i_k}$ and $\gamma_k$}. we show this formally, in the case of non-interpolation~(otherwise both SGD and SPS do not require a decreasing learning rate).
\vspace{-4mm}
\paragraph{Counterexample.} 
Consider the following finite-sum setting: $f(x) = \frac{1}{2} f_1(x) + \frac{1}{2} f_2(x)$ with $f_1(x)=\frac{a_1}{2}(x-1)^2, f_2(x)=\frac{a_2}{2}(x+1)^2$. 
To make the problem interesting, we choose $a_1=2$ and $a_2=1$: this introduces asymmetry in the average landscape with respect to the origin. During optimization, we sample $f_1$ and $f_2$ independently and seek convergence to the unique minimizer $x^*=\frac{a_1-a_2}{a_1+a_2}=1/3$. The first thing we notice is that $x^*$ is not a stationary point for the dynamics under SPS. 
Indeed note that since $f_i^*=0$ for $i=1,2$ we have~(assuming $\gamma_b$ large enough): $\gamma_k \nabla f_{i_k}(x) =  \frac{x-1}{2c_k}$, if $i_k = 1$, and $\gamma_k \nabla f_{i_k}(x) = \frac{x+1}{2c_k}$ if $i_k = 2$.

Crucially, note that this update is \textit{curvature-independent}. The expected update is $\Exp_{i_k}[\gamma_k\nabla f_{i_k}(x)] =\frac{x-1}{4c_k}+\frac{x+1}{4c_k}=\frac{1}{2c_k} x$. Hence, the iterates can only converge to $x=0$ --- because this is the only fixed point for the update rule. The proof naturally extends to the multidimensional setting, an illustration can be found in Fig.~\ref{fig:convergence_wrong_SPS}. 

\begin{wrapfigure}[21]{r}{6.7 cm}
\vspace{-8mm}
\centering
    \textbf{\small\ \     No interpolation\ \ \ \ \ \ \ Interpolation\ \ \ \ \ }\\
    \includegraphics[width=0.99\linewidth]{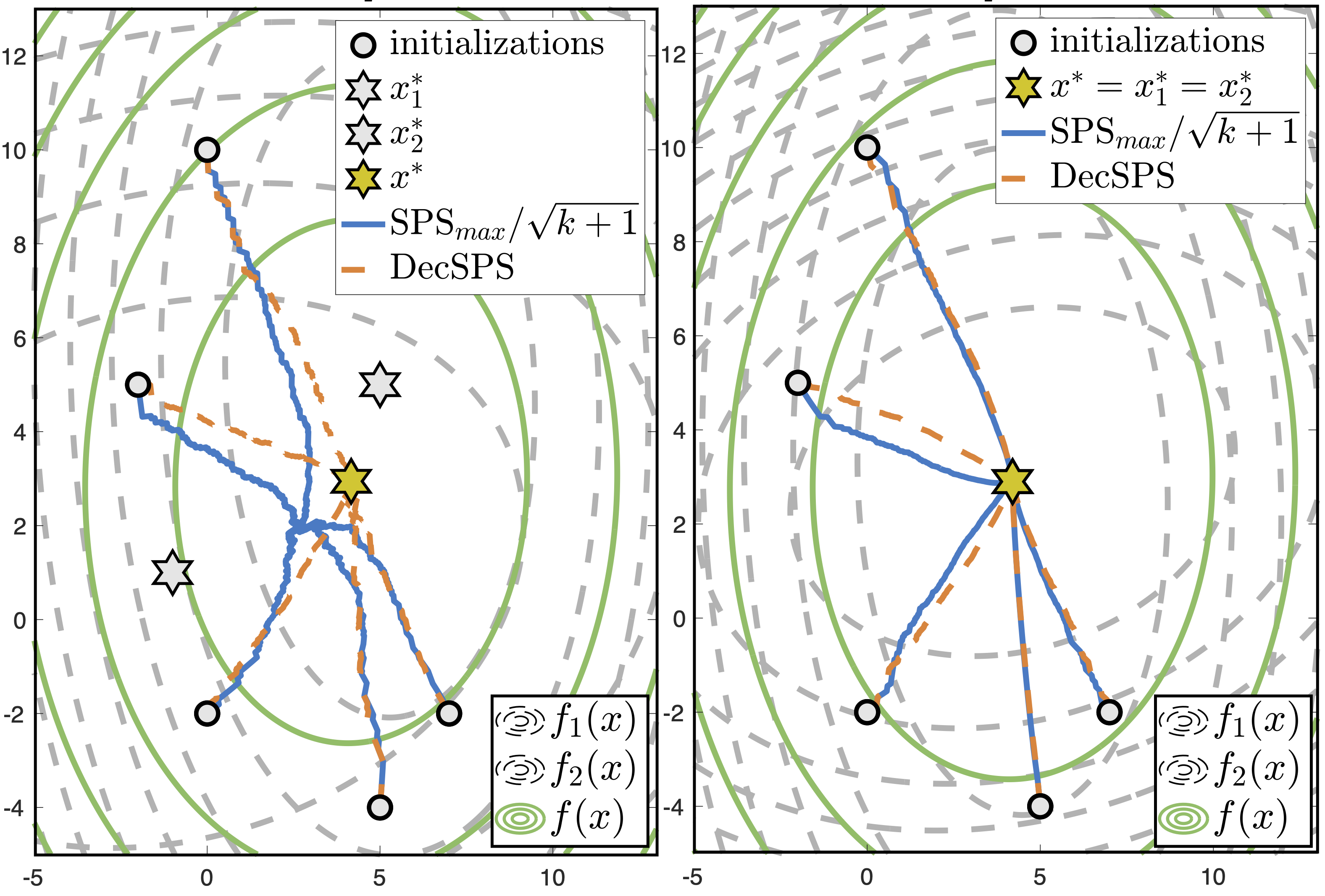}
    \caption{\small Dynamics of SPS$_{\max}$ with decreasing multiplicative constant~(SGD style) compared with DecSPS. We compared both in the \textbf{interpolated setting~(right)} and in the \textbf{non-interpolated setting~(left)}. In the non-interpolated setting, a simple multiplicative factor introduces a bias in the final solution, as discussed in this section. We consider two dimensional $f_i=\frac{1}{2}(x-x_i^*)^\top H_i (x-x_i^*)$, for $i = 1,2$ and plot the contour lines of the corresponding landscapes, as well as the average landscape $(f_1+f_2)/2$ we seek to minimize. Solution is denoted with a gold star.}
    \label{fig:convergence_wrong_SPS}
\end{wrapfigure}

In the same picture, we show how our modified variant of the vanilla stepsize --- we call this new algorithm DecSPS, see \S\ref{sec:convergence_decr} --- instead converges to the correct solution.

\begin{remark}
SGD with~(non-adaptive) stepsize $\gamma_k$ instead keeps the curvature, and therefore is able to correctly estimate the average $\Exp_{i_k}[\gamma_k \nabla f_{i_k}(x)] = \frac{\gamma_k}{2}(a_1+a_2)\left[x-\frac{a_1-a_2}{a_1+a_2}\right]$ --- precisely because $\gamma_k$ is independent from $\nabla f_{i_k}$. From this we can see that SGD can only converge to the correct stationary point $x^* = \frac{a_1-a_2}{a_1+a_2}$ --- because again this is the only fixed point for the update rule.
\end{remark}
\vspace{-2mm}
In the appendix, we go one step further and provide an analysis of the bias of SPS in the one-dimensional quadratic case~(Prop.~\ref{prop:bias}). Yet, we expect the precise characterization of the bias phenomenon in the non-quadratic setting to be particularly challenging. We provide additional insights in \S\ref{app:bias_corr}. Instead, in the next section, we show how to effectively modify $\gamma_k$ to yield convergence to $x^*$ without further assumptions.
\vspace{-1mm}
\section{DecSPS: Convergence to the exact solution}
\label{sec:convergence_decr}
\vspace{-3mm}
We propose the following modification of the vanilla SPS proposed in~\citep{loizou2021stochastic}, designed to yield convergence to the exact minimizer while keeping the main adaptiveness properties\footnote{Similar choices are possible. We found that this leads to the biggest stepsize magnitude, allowing for faster convergence in practice.}. We call it Decreasing SPS~(DecSPS), since it combines a steady stepsize decrease with the adaptiveness of SPS.
\begin{tcolorbox}
\begin{equation}
\tag{DecSPS}
{\small
 \gamma_k:=\frac{1}{c_k}\min\left\{\frac{f_{\SC_k}(x^k)-\ell^*_{\SC_k}}{\|\nabla f_{\SC_k}(x^k)\|^2}, \ c_{k-1}\gamma_{k-1}\right\},}
    \label{eq:decr_step}
\end{equation}
\end{tcolorbox}
for $k\in\mathbb{N}$, where $c_k\ne 0$ for every $k\in\mathbb{N}$. We set $c_{-1}=c_0$ and $\gamma_{-1} = \gamma_b>0$~(stepsize bound, similar to~\citep{loizou2021stochastic}), to get $\gamma_0:=\frac{1}{c_0}\cdot\min\left\{\frac{f_{\SC_0}(x^0)-\ell^*_{\SC_0}}{\|\nabla f_{\SC_0}(x^k)\|^2},\quad c_0\gamma_{b}\right\}$.

\begin{restatable}[]{lem}{sandwichbounds}
\label{lemma:easy_bounds}
Let each $f_i$ be $L_i$ smooth and let $(c_k)_{k=0}^\infty$ be any non-decreasing positive sequence of real numbers.  Under DecSPS, we have $\min\left\{\frac{1}{2c_k L_{\max}},\frac{c_0\gamma_b}{c_k}\right\}\le\gamma_k\le \frac{c_0\gamma_b}{c_k}$, and $\gamma_{k-1}\le\gamma_k$
\end{restatable}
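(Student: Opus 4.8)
The plan is to prove the three claims --- the lower bound, the upper bound, and monotonicity $\gamma_{k-1}\le\gamma_k$ --- by induction on $k$, exploiting the recursive structure of the DecSPS definition. The key analytical input is the fundamental sandwich from Lemma~\ref{lemma:basic_loizou}, which gives $\frac{1}{2cL_{\SC_k}}\le \frac{f_{\SC_k}(x^k)-f_{\SC_k}^*}{c\|\nabla f_{\SC_k}(x^k)\|^2}$, and together with the lower-bound estimate $\ell^*_{\SC_k}\le f^*_{\SC_k}$ this yields $\frac{1}{2L_{\max}}\le \frac{f_{\SC_k}(x^k)-\ell^*_{\SC_k}}{\|\nabla f_{\SC_k}(x^k)\|^2}$ (taking $c=1$ as in DecSPS and using $L_{\SC_k}\le L_{\max}$). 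I would isolate this as the single deterministic lower bound on the Polyak ratio that feeds into the whole argument.

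\textbf{Upper bound.} I would first establish $\gamma_k\le c_0\gamma_b/c_k$ directly from the definition, independently of the lower bound. Writing $M_k:=\frac{f_{\SC_k}(x^k)-\ell^*_{\SC_k}}{\|\nabla f_{\SC_k}(x^k)\|^2}$, the update is $\gamma_k=\frac{1}{c_k}\min\{M_k,\,c_{k-1}\gamma_{k-1}\}$, so $\gamma_k\le \frac{c_{k-1}\gamma_{k-1}}{c_k}$, i.e. $c_k\gamma_k\le c_{k-1}\gamma_{k-1}$. Thus the quantity $c_k\gamma_k$ is non-increasing in $k$, and since $c_{-1}\gamma_{-1}=c_0\gamma_b$ by the initialization convention, a trivial induction gives $c_k\gamma_k\le c_0\gamma_b$, which is exactly the upper bound after dividing by $c_k>0$.

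\textbf{Lower bound.} For the lower bound I would argue inductively that $c_k\gamma_k\ge \min\{\frac{1}{2L_{\max}},\,c_0\gamma_b\}$. In the base case $c_0\gamma_0=\min\{M_0,\,c_0\gamma_b\}\ge\min\{\frac{1}{2L_{\max}},\,c_0\gamma_b\}$ using the Polyak-ratio lower bound $M_0\ge \frac{1}{2L_{\max}}$. For the inductive step, $c_k\gamma_k=\min\{M_k,\,c_{k-1}\gamma_{k-1}\}$; the first term is bounded below by $\frac{1}{2L_{\max}}$ and the second by the induction hypothesis, so the minimum is at least $\min\{\frac{1}{2L_{\max}},\,c_0\gamma_b\}$. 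Dividing by $c_k$ gives $\gamma_k\ge \frac{1}{c_k}\min\{\frac{1}{2L_{\max}},\,c_0\gamma_b\}=\min\{\frac{1}{2c_kL_{\max}},\,\frac{c_0\gamma_b}{c_k}\}$, as claimed.

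\textbf{Monotonicity.} Finally, $\gamma_{k-1}\le\gamma_k$ follows by combining the two bounds at consecutive indices: from the upper-bound step we have $c_k\gamma_k=\min\{M_k,c_{k-1}\gamma_{k-1}\}$, and I would show $M_k\ge c_{k-1}\gamma_{k-1}$ so that the min is attained at the second argument, giving $c_k\gamma_k=c_{k-1}\gamma_{k-1}$ and hence $\gamma_k=\frac{c_{k-1}}{c_k}\gamma_{k-1}\le\gamma_{k-1}$ --- but note this is the \emph{wrong} direction, so the intended claim $\gamma_{k-1}\le\gamma_k$ must instead use that $c_k$ is non-decreasing together with $c_k\gamma_k$ being controlled; the cleanest route is to observe $\gamma_k=\frac{1}{c_k}\min\{M_k,c_{k-1}\gamma_{k-1}\}\ge \frac{1}{c_k}\min\{\frac{1}{2L_{\max}},c_{k-1}\gamma_{k-1}\}$ and compare against $\gamma_{k-1}$. \textbf{The main obstacle} is exactly this last step: reconciling the decrease of $c_k\gamma_k$ with the asserted increase (or at least non-decrease relative to the scaled quantity) of $\gamma_k$ requires care about what ``$\gamma_{k-1}\le\gamma_k$'' means under the normalization, and I expect one must either restrict to the regime where the $c_{k-1}\gamma_{k-1}$ branch of the min is active --- in which case $\gamma_k=\frac{c_{k-1}}{c_k}\gamma_{k-1}$ and the statement needs the reading that the \emph{effective} stepsizes are ordered via the sandwich --- or invoke the sandwich bounds just proved to pin $\gamma_k$ and $\gamma_{k-1}$ into overlapping intervals and conclude the intended ordering from the explicit $1/c_k$ versus $1/c_{k-1}$ scaling.
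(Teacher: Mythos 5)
Your sandwich bounds are correct and follow essentially the same route as the paper: the paper's proof is also an induction whose inductive step tracks the quantity $\iota := c_k\gamma_k \in \left[\min\left\{\tfrac{1}{2L_{\max}}, c_0\gamma_b\right\}, c_0\gamma_b\right]$, feeds in $\frac{f_{\SC_{k}}(x^{k})-\ell^*_{\SC_{k}}}{\|\nabla f_{\SC_{k}}(x^{k})\|^2}\ge \frac{f_{\SC_{k}}(x^{k})-f^*_{\SC_{k}}}{\|\nabla f_{\SC_{k}}(x^{k})\|^2}\ge\frac{1}{2L_{\max}}$ from Lemma~\ref{lemma:basic_loizou}, and divides by $c_k$ at the end. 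Your reformulation via the monotone quantity $c_k\gamma_k$ with $c_{-1}\gamma_{-1}=c_0\gamma_b$ is the same argument, stated slightly more cleanly.

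On monotonicity, you tied yourself in a knot trying to prove the inequality as literally printed, and your instinct that something is off is right: the direction in the statement is a typo. The remark immediately following the lemma and the paper's own proof both assert that $\gamma_k$ is \emph{non-increasing}, i.e.\ $\gamma_k\le\gamma_{k-1}$, and this is exactly what you already derived: $c_k\gamma_k\le c_{k-1}\gamma_{k-1}$ together with $c_{k-1}\le c_k$ gives $\gamma_k\le \frac{c_{k-1}}{c_k}\gamma_{k-1}\le\gamma_{k-1}$. The printed inequality $\gamma_{k-1}\le\gamma_k$ is false in general --- e.g.\ whenever the $c_{k-1}\gamma_{k-1}$ branch of the min is active and $c_k$ is strictly increasing, one gets $\gamma_k=\frac{c_{k-1}}{c_k}\gamma_{k-1}<\gamma_{k-1}$ --- so there is nothing to salvage there, and your attempts to ``restrict to a regime'' or re-invoke the sandwich to force the reversed ordering would not succeed. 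You should simply state the one-line non-increasing argument and note the sign discrepancy with the statement, rather than hedging.
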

\begin{remark}
As stated in the last lemma, under the assumption of $c_k$ non-decreasing, $\gamma_k$ is trivially \textit{non-increasing} since $\gamma_k\le c_{k-1}\gamma_{k-1}/c_k$.
\end{remark}
The proof can be found in the appendix, and is based on a simple induction argument.

\vspace{-2mm}

\subsection{Convergence under bounded iterates}
\vspace{-2mm}
The following result provides a proof of convergence of SGD for the $\gamma_k$ sequence defined above. 
\begin{restatable}[]{thm}{decSPScvx}
\label{thm:SPS_bounded_domain_cvx} Consider SGD with DecSPS and let $(c_k)_{k=0}^\infty$ be any non-decreasing sequence such that $c_k\ge1, \forall k\in\mathbb{N}$. Assume that each $f_i$ is convex and $L_i$ smooth. We have:
\begin{equation}
 \Exp[f(\bar{x}^K)-f(x^*)] \leq \frac{2c_{K-1} \tilde L D^2}{K} +  \frac{1}{K}\sum_{k=0}^{K-1} \frac{\hat\sigma^2_{B}}{c_k},
\end{equation}
\vspace{-2mm}
where $D^2 :=\max_{k \in [K-1]}\|x^k-x^*\|^2$, $\tilde L := \max\left \{ \max_i\{L_i\},\frac{1}{2c_0\gamma_b}\right\}$ and $\bar{x}^K=\frac{1}{K}\sum_{k=0}^{K-1} x^k$.
\end{restatable}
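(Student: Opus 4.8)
The plan is to run the standard SGD one-step descent analysis but with three DecSPS-specific twists that are needed simultaneously to keep the noise term at order $1/c_k$ and to neutralize the correlation bias identified in \S\ref{sec:bias}. First I would expand $\|x^{k+1}-x^*\|^2 = \|x^k-x^*\|^2 - 2\gamma_k\langle \nabla f_{\SC_k}(x^k), x^k-x^*\rangle + \gamma_k^2\|\nabla f_{\SC_k}(x^k)\|^2$ and use convexity of $f_{\SC_k}$ to lower-bound the inner product by $f_{\SC_k}(x^k)-f_{\SC_k}(x^*)$. For the quadratic term I would invoke the defining inequality of DecSPS, $\gamma_k\|\nabla f_{\SC_k}(x^k)\|^2 \le \frac{1}{c_k}(f_{\SC_k}(x^k)-\ell^*_{\SC_k})$, to obtain $\gamma_k^2\|\nabla f_{\SC_k}(x^k)\|^2 \le \frac{\gamma_k}{c_k}(f_{\SC_k}(x^k)-\ell^*_{\SC_k})$.

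The crucial bookkeeping step is to \emph{not} discard the $1/c_k$ factor. Writing $f_{\SC_k}(x^k)-\ell^*_{\SC_k} = (f_{\SC_k}(x^k)-f_{\SC_k}(x^*)) + (f_{\SC_k}(x^*)-\ell^*_{\SC_k})$ and collecting terms yields $\|x^{k+1}-x^*\|^2 \le \|x^k-x^*\|^2 - \gamma_k(2-\tfrac{1}{c_k})(f_{\SC_k}(x^k)-f_{\SC_k}(x^*)) + \tfrac{\gamma_k}{c_k}(f_{\SC_k}(x^*)-\ell^*_{\SC_k})$. I would then divide through by $\gamma_k$: this is the key move, since it produces a noise term $\frac{1}{c_k}(f_{\SC_k}(x^*)-\ell^*_{\SC_k})$ whose coefficient no longer depends on the random $\gamma_k$, together with a telescoping piece $\frac{1}{\gamma_k}(\|x^k-x^*\|^2-\|x^{k+1}-x^*\|^2)$, giving $(2-\tfrac{1}{c_k})(f_{\SC_k}(x^k)-f_{\SC_k}(x^*)) \le \frac{1}{\gamma_k}(\|x^k-x^*\|^2-\|x^{k+1}-x^*\|^2) + \frac{1}{c_k}(f_{\SC_k}(x^*)-\ell^*_{\SC_k})$.

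To kill the bias I would take the conditional expectation $\Exp_{\SC_k}[\,\cdot\mid x^k]$ \emph{before} doing anything with the sign of $f_{\SC_k}(x^k)-f_{\SC_k}(x^*)$. Since $2-\tfrac{1}{c_k}\ge 1$ (this is precisely where $c_k\ge 1$ is used) and $\Exp_{\SC_k}[f_{\SC_k}(x^k)-f_{\SC_k}(x^*)]=f(x^k)-f(x^*)\ge 0$, I can lower-bound the left-hand side by $f(x^k)-f(x^*)$ without ever lower-bounding the random $\gamma_k$ against a possibly-negative quantity -- this is exactly the maneuver that sidesteps the curvature-blind bias of \S\ref{sec:bias}. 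Using $\Exp_{\SC_k}[f_{\SC_k}(x^*)-\ell^*_{\SC_k}]=\hat\sigma^2_B$, this gives the per-step bound $\Exp[f(x^k)-f(x^*)] \le \Exp\!\big[\tfrac{1}{\gamma_k}(\|x^k-x^*\|^2-\|x^{k+1}-x^*\|^2)\big] + \tfrac{\hat\sigma^2_B}{c_k}$.

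Finally I would sum over $k=0,\dots,K-1$, divide by $K$, and apply Jensen, $f(\bar{x}^K)-f(x^*)\le \tfrac1K\sum_k (f(x^k)-f(x^*))$. The remaining task -- and the step I expect to be the main obstacle -- is the weighted telescope $\sum_k \tfrac{1}{\gamma_k}(\|x^k-x^*\|^2-\|x^{k+1}-x^*\|^2)$, which is not a plain telescope because $1/\gamma_k$ varies with $k$. I would handle it by summation-by-parts (Abel), using that $1/\gamma_k$ is non-decreasing (Lemma~\ref{lemma:easy_bounds}, equivalently the Remark that $\gamma_k$ is non-increasing) and that $\|x^k-x^*\|^2\le D^2$ under the bounded-iterates assumption; this leaves $\tfrac{D^2}{\gamma_{K-1}}$ plus a non-positive boundary term that I discard, and the lower bound $\gamma_{K-1}\ge \tfrac{1}{2\tilde L c_{K-1}}$ from Lemma~\ref{lemma:easy_bounds} converts $\tfrac{D^2}{\gamma_{K-1}}$ into $2\tilde L c_{K-1}D^2$, producing exactly the claimed rate. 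The two genuinely delicate points are ensuring the division by $\gamma_k$ preserves the $1/c_k$ scaling of the noise (so the neighborhood shrinks), and making the Abel estimate pathwise so that it survives the outer expectation.
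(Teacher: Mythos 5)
Your proposal is correct and follows essentially the same route as the paper's proof: the same one-step expansion with the DecSPS inequality $\gamma_k^2\|\nabla f_{\SC_k}(x^k)\|^2\le \tfrac{\gamma_k}{c_k}(f_{\SC_k}(x^k)-\ell^*_{\SC_k})$, the same division by $\gamma_k$, the same Abel/telescoping argument using monotonicity of $1/\gamma_k$ and the bound $1/\gamma_{K-1}\le 2c_{K-1}\tilde L$ from Lemma~\ref{lemma:easy_bounds}, and the same care in keeping the factor $\left(2-\tfrac{1}{c_k}\right)$ until after taking expectations (the paper explicitly flags this as the fix to an error in an earlier version). The only difference is the harmless reordering of taking expectations before versus after summing over $k$.
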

If $\hat \sigma^2_B=0$, then $c_k=1$ for all $k\in\mathbb{N}$ leads to a rate $\mathcal{O}(\frac{1}{K})$, well known from~\citep{loizou2021stochastic}. If $\hat\sigma^2_B>0$, as for the standard SGD analysis under decreasing stepsizes, the choice $c_k=\mathcal{O}(\sqrt{k})$ leads to an optimal asymptotic trade-off between the deterministic and the stochastic terms, hence to the asymptotic rate $\mathcal{O}(1/\sqrt{k})$ since $\sum_{k=0}^{K-1}\frac{1}{\sqrt{k+1}}\le 2\sqrt{K}$. Moreover, picking $c_0=1$ minimizes the speed of convergence for the deterministic factor. Under the assumption that $\hat \sigma^2_B\ll \tilde L D^2$~(e.g. reasonable distance initialization-solution and $L_{\max} > 1/\gamma_b$), this factor is dominant compared to the factor involving $\hat \sigma^2_B$. For this setting, the rate simplifies as follows.
\begin{restatable}[]{cor}{CorSqrtSmooth}
\label{cor:sqrt_smooth}
Under the setting of Thm.~\ref{thm:SPS_bounded_domain_cvx}, for $c_k=\sqrt{k+1}$~($c_{-1}=c_0$) we have 
\begin{equation}
 \Exp[f(\bar{x}^K)-f(x^*)] \leq \frac{2\tilde L D^2 + 2\hat\sigma^2_{B}}{\sqrt{K}}.
\end{equation}
\end{restatable}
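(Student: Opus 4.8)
The plan is to specialize Theorem~\ref{thm:SPS_bounded_domain_cvx} to the sequence $c_k = \sqrt{k+1}$, so that the entire argument reduces to a direct substitution followed by one standard estimate on a partial sum. First I would verify admissibility: the sequence $c_k = \sqrt{k+1}$ is positive, non-decreasing in $k$, and satisfies $c_k \geq \sqrt{1} = 1$ for every $k \in \mathbb{N}$. Hence all hypotheses of Theorem~\ref{thm:SPS_bounded_domain_cvx} are met and its bound
\[
\Exp[f(\bar{x}^K)-f(x^*)] \leq \frac{2c_{K-1}\tilde L D^2}{K} + \frac{1}{K}\sum_{k=0}^{K-1}\frac{\hat\sigma^2_B}{c_k}
\]
applies verbatim with this choice of $(c_k)$.

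Next I would simplify the two terms separately. For the deterministic term, substituting $c_{K-1} = \sqrt{K}$ gives $\tfrac{2\sqrt{K}\,\tilde L D^2}{K} = \tfrac{2\tilde L D^2}{\sqrt{K}}$. For the stochastic term, the only quantity to control is $\sum_{k=0}^{K-1} c_k^{-1} = \sum_{j=1}^{K} j^{-1/2}$, which I would bound using the integral-comparison estimate already quoted in the paragraph preceding the statement: since $x \mapsto x^{-1/2}$ is decreasing, $j^{-1/2} \leq \int_{j-1}^{j} x^{-1/2}\,dx$, and summing over $j = 1, \dots, K$ yields $\sum_{j=1}^{K} j^{-1/2} \leq \int_0^K x^{-1/2}\,dx = 2\sqrt{K}$. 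Plugging this in gives $\tfrac{\hat\sigma^2_B}{K}\cdot 2\sqrt{K} = \tfrac{2\hat\sigma^2_B}{\sqrt{K}}$ for the second term.

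Adding the two simplified contributions produces $\Exp[f(\bar{x}^K)-f(x^*)] \leq \tfrac{2\tilde L D^2 + 2\hat\sigma^2_B}{\sqrt{K}}$, which is exactly the claimed bound. I do not expect any genuine obstacle in this corollary: it is a pure specialization of Theorem~\ref{thm:SPS_bounded_domain_cvx}, and the single nontrivial ingredient --- the $\mathcal{O}(\sqrt{K})$ bound on the partial sums of $j^{-1/2}$ --- is entirely standard and was flagged in advance. The substantive mathematics lives upstream, in Theorem~\ref{thm:SPS_bounded_domain_cvx} and the analysis controlling $D^2$ (here via the bounded-domain assumption, and later removed under strong convexity in \S\ref{sec:no_bound}); the role of this corollary is simply to read off the optimal $\mathcal{O}(1/\sqrt{K})$ trade-off from the general inequality.
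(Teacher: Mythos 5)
Your proof is correct and follows exactly the route the paper intends: substitute $c_{K-1}=\sqrt{K}$ into the bound of Theorem~\ref{thm:SPS_bounded_domain_cvx} and control the partial sum via $\sum_{k=0}^{K-1}(k+1)^{-1/2}\le 2\sqrt{K}$, which is precisely the inequality quoted in the paragraph preceding the corollary. Nothing is missing.
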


\begin{remark}[Beyond bounded iterates]
The result above crucially relies on the bounded iterates assumption: $D^2<\infty$. To the best of our knowledge, if no further regularity is assumed, modern convergence results for adaptive methods~(e.g. variants of AdaGrad) in convex stochastic programming require\footnote{Perhaps the only exception is the result of~\citep{xie2020linear}, where the authors work on a different setting: i.e. they introduce the RUIG inequality.} this assumption, or else require gradients to be globally bounded. To mention a few: \citep{duchi2011adaptive, reddi2019convergence, ward2019adagrad, defossez2020simple,vaswani2020adaptive}. A simple algorithmic fix to this problem is adding a cheap projection step onto a large bounded domain~\citep{levy2018online}. We can of course include this projection step in DecSPS, and the theorem above will hold with no further modification. Yet we found this to be not necessary: the strong guarantees of SPS in the strongly convex setting~\citep{loizou2021stochastic} let us go one step beyond: in \S\ref{sec:no_bound} we show that, if each $f_i$ is strongly convex~(e.g. regularizer is added), then one can bound the iterates globally with probability one, without knowledge of the gradient Lipschitz constant. To the best of our knowledge, no such result exist for AdaGrad --- except~\citep{traore2021sequential}, for the deterministic case.
\end{remark}

\begin{remark}[Dependency on the problem dimension]
In standard results for AdaGrad, a dependency on the problem dimension often appears~(e.g. Thm.~1 in~\citep{vaswani2020adaptive}). This dependency follows from a bound on the AdaGrad preconditioner that can be found e.g. in Thm.~4 in~\citep{levy2018online}. In the SPS case no such dependency appears --- specifically because the stepsize is lower bounded by $1/(2c_kL_{\max})$.
\end{remark}

\subsection[]{Removing the bounded iterates assumption}
\label{sec:no_bound}
\vspace{-1mm}
We prove that under DecSPS the iterates live in a set of diameter $D_{\max}$ almost surely. This can be done by assuming strong convexity of each $f_i$. 

The result uses this alternative definition of \textit{neighborhood}: $\hat\sigma^2_{B,\max} := \max_{\SC\subseteq[n], |\SC| =B}[ f_{\SC}(x^*)-\ell_{\SC}^*]$.

Note that trivially $\hat\sigma^2_{B,\max}<\infty$ under the assumption that all $f_{i}$ are lower bounded and $n<\infty$.

\begin{restatable}{prop}{PropSCUnbound}
\label{prop:strong_convex_bound_2}
Let each $f_i$ be $\mu_i$-strongly convex and $L_i$-smooth. The iterates of SGD with DecSPS with $c_k =\sqrt{k+1}$~(and $c_{-1}=c_0$) are such that $\|x^k-x^*\|^2\le D^2_{\max}$ almost surely $\forall k\in\mathbb{N}$, where $D^2_{\max}:=\max\left\{\|x^{0}-x^*\|^2, \frac{2c_0\gamma_b\hat\sigma^2_{B,\max}}{\min\left\{\frac{\mu_{\min}}{2 L_{\max}},\mu_{\min}\gamma_b\right\}}\right\}$, with $\mu_{\min} = \min_{i\in[n]}\mu_i$ and $L_{\max} = \max_{i\in[n]}L_i$.
\end{restatable}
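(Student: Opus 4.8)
The plan is to prove the bound by induction on $k$, showing that the event $\{\|x^k-x^*\|^2\le D_{\max}^2\}$ propagates deterministically along every sample path (hence almost surely). The base case is immediate since $D_{\max}^2\ge\|x^0-x^*\|^2$ by definition. For the inductive step I would first establish a one-step inequality. Expanding the SGD update gives
\begin{equation}
\|x^{k+1}-x^*\|^2=\|x^k-x^*\|^2-2\gamma_k\langle\nabla f_{\SC_k}(x^k),x^k-x^*\rangle+\gamma_k^2\|\nabla f_{\SC_k}(x^k)\|^2 .
\end{equation}
I would lower-bound the cross term using $\mu_{\SC_k}$-strong convexity of $f_{\SC_k}$ (with $\mu_{\SC_k}=\frac1B\sum_{i\in\SC_k}\mu_i\ge\mu_{\min}$), namely $\langle\nabla f_{\SC_k}(x^k),x^k-x^*\rangle\ge f_{\SC_k}(x^k)-f_{\SC_k}(x^*)+\frac{\mu_{\SC_k}}{2}\|x^k-x^*\|^2$, and control the last term via the defining property of DecSPS, $\gamma_k\|\nabla f_{\SC_k}(x^k)\|^2\le (f_{\SC_k}(x^k)-\ell^*_{\SC_k})/c_k$, so that $\gamma_k^2\|\nabla f_{\SC_k}(x^k)\|^2\le \gamma_k(f_{\SC_k}(x^k)-\ell^*_{\SC_k})/c_k$.

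The key algebraic step is to combine the two function-value contributions. Writing $A:=f_{\SC_k}(x^k)-f_{\SC_k}(x^*)$ and $\delta:=f_{\SC_k}(x^*)-\ell^*_{\SC_k}\ge0$, these terms equal $\gamma_k A(\tfrac1{c_k}-2)+\tfrac{\gamma_k}{c_k}\delta$. Since $c_k\ge1$ the coefficient of $A$ is negative, and since $A\ge-\delta$ (because $f_{\SC_k}(x^k)-\ell^*_{\SC_k}\ge f^*_{\SC_k}-\ell^*_{\SC_k}\ge0$), they telescope to exactly $2\gamma_k\delta\le2\gamma_k\hat\sigma^2_{B,\max}$. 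This yields the clean recursion
\begin{equation}
\|x^{k+1}-x^*\|^2\le(1-\gamma_k\mu_{\SC_k})\|x^k-x^*\|^2+2\gamma_k\hat\sigma^2_{B,\max}=\|x^k-x^*\|^2+\gamma_k\left(2\hat\sigma^2_{B,\max}-\mu_{\SC_k}\|x^k-x^*\|^2\right).
\end{equation}

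To close the induction I would split on whether $\|x^k-x^*\|^2$ exceeds the core radius $2\hat\sigma^2_{B,\max}/\mu_{\SC_k}$. If it does, the bracket is nonpositive and $\|x^{k+1}-x^*\|^2\le\|x^k-x^*\|^2\le D_{\max}^2$. If it does not, then $\|x^k-x^*\|^2$ lies in the interval $[0,\,2\hat\sigma^2_{B,\max}/\mu_{\SC_k}]$, and since the right-hand side is affine in $\|x^k-x^*\|^2$ it is maximized at one of the two endpoints, giving $\|x^{k+1}-x^*\|^2\le\max\{2c_0\gamma_b\hat\sigma^2_{B,\max},\,2\hat\sigma^2_{B,\max}/\mu_{\min}\}$ after using $\gamma_k\le c_0\gamma_b/c_k\le c_0\gamma_b$ from Lemma~\ref{lemma:easy_bounds} and $\mu_{\SC_k}\ge\mu_{\min}$. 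A short verification then shows both endpoint values are at most the second argument of the $\max$ defining $D_{\max}^2$: the bound $2\hat\sigma^2_{B,\max}/\mu_{\min}\le D_{\max}^2$ follows from $\min\{1/(2L_{\max}),\gamma_b\}\le\gamma_b$ (with $c_0=1$), and $2c_0\gamma_b\hat\sigma^2_{B,\max}\le D_{\max}^2$ follows from $\mu_{\min}\min\{1/(2L_{\max}),\gamma_b\}\le\mu_{\min}/(2L_{\max})\le1/2$, using $\mu_{\min}\le L_{\max}$.

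The main obstacle I anticipate is that one cannot treat the recursion as a contraction: because $\gamma_b$ is an arbitrary user-chosen bound, $1-\gamma_k\mu_{\SC_k}$ need not be nonnegative, so the usual ``$(1-\gamma\mu)$-contraction toward a fixed point'' argument fails. The endpoint argument above is precisely what sidesteps this, since it bounds the affine update over the core interval without any reference to the sign of $1-\gamma_k\mu_{\SC_k}$. Finally, every inequality holds pathwise for each realization of $\SC_k$, so the resulting bound $\|x^k-x^*\|^2\le D_{\max}^2$ holds almost surely for all $k\in\mathbb{N}$.
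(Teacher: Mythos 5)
Your proposal is correct, and it derives the one-step recursion in exactly the same way as the paper: strong convexity to lower-bound the inner product, the defining inequality $\gamma_k^2\|\nabla f_{\SC_k}(x^k)\|^2\le\frac{\gamma_k}{c_k}(f_{\SC_k}(x^k)-\ell^*_{\SC_k})$ for the quadratic term, and the observation that the coefficient of $f_{\SC_k}(x^k)-\ell^*_{\SC_k}$ is nonpositive for $c_k\ge 1$, leaving $(1-\gamma_k\mu)\|x^k-x^*\|^2+2\gamma_k\hat\sigma^2_{B,\max}$ (your ``telescope to exactly'' should read ``are bounded above by'', but your displayed inequality is the correct one). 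Where you genuinely diverge is in closing the induction. The paper majorizes by the deterministic scalar recursion $z^{k+1}=(1-a/\sqrt{k+1})z^k+b/\sqrt{k+1}$ with $a=\min\{\mu_{\min}/(2L_{\max}),\mu_{\min}\gamma_b\}$ and $b=2c_0\gamma_b\hat\sigma^2_{B,\max}$, and invokes a dedicated lemma (Lemma~\ref{lemma:probonenobound}) whose hypothesis $0<a\le1$ is what guarantees a nonnegative contraction factor; verifying that hypothesis and identifying $a$ requires the \emph{lower} bound on $\gamma_k$ from Lemma~\ref{lemma:easy_bounds} together with $\mu_{\min}\le L_{\max}$, and the lemma then returns $\max\{z^0,b/a\}=D^2_{\max}$ on the nose. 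Your case split on whether $\|x^k-x^*\|^2$ exceeds $2\hat\sigma^2_{B,\max}/\mu_{\SC_k}$, combined with the affine-endpoint argument, needs only the upper bound $\gamma_k\le c_0\gamma_b$ and no sign condition on $1-\gamma_k\mu_{\SC_k}$, which is a real simplification; the price is the extra (correctly executed) verification that both endpoint values $2c_0\gamma_b\hat\sigma^2_{B,\max}$ and $2\hat\sigma^2_{B,\max}/\mu_{\min}$ are dominated by the second argument of $D^2_{\max}$, using $\mu_{\min}\le L_{\max}$ and $c_0=1$. Both routes are sound and give the same constant.
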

\vspace{-1mm}
The proof relies on the variations of constants formula and an induction argument --- it is provided in the appendix. We are now ready to state the main theorem for the unconstrained setting, which follows  from Prop.~\ref{prop:strong_convex_bound_2} and Thm.~\ref{thm:SPS_bounded_domain_cvx}.
\begin{restatable}{thm}{ThmNoBound}
\label{thm:no_bound}
Consider SGD with the DecSPS stepsize $\gamma_k:=\frac{1}{\sqrt{k+1}}\cdot\min\left\{\frac{f_{\SC_k}(x^k)-\ell^*_{\SC_k}}{\|\nabla f_{\SC_k}(x^k)\|^2},\gamma_{k-1}\sqrt{k}\right\}$, for $k\ge 1$ and $\gamma_0$ defined as at the beginning of this section. Let each $f_i$ be $\mu_i$-strongly convex and $L_i$-smooth:
\vspace{-2mm}
\begin{equation}
 \Exp[f(\bar{x}^K)-f(x^*)] \leq \frac{2\tilde L D^2_{\max}+2\hat\sigma^2_{B}}{\sqrt{K}}.
\end{equation}
\end{restatable}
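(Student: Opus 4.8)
The plan is to obtain Theorem~\ref{thm:no_bound} as an essentially immediate consequence of the almost-sure iterate bound of Proposition~\ref{prop:strong_convex_bound_2} combined with the already-established convergence rate of Corollary~\ref{cor:sqrt_smooth}. The only genuine work is to verify that the diameter quantity $D^2$ appearing in Theorem~\ref{thm:SPS_bounded_domain_cvx} may be legitimately replaced by the deterministic constant $D_{\max}^2$. First I would observe that $\mu_i$-strong convexity implies convexity of each $f_i$, and each $f_i$ is $L_i$-smooth, so all the hypotheses of Theorem~\ref{thm:SPS_bounded_domain_cvx} (with the specific choice $c_k=\sqrt{k+1}$, $c_{-1}=c_0$ of Corollary~\ref{cor:sqrt_smooth}) are in force. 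In particular the estimate
\[
\Exp[f(\bar x^K)-f(x^*)] \le \frac{2c_{K-1}\tilde L\, D^2}{K} + \frac{1}{K}\sum_{k=0}^{K-1}\frac{\hat\sigma^2_B}{c_k}
\]
holds, where $D^2 := \max_{k\in[K-1]}\|x^k-x^*\|^2$ is, a priori, a random quantity depending on the realized trajectory.

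The second step is to inspect \emph{how} $D^2$ enters the proof of Theorem~\ref{thm:SPS_bounded_domain_cvx}. Since DecSPS produces a non-increasing stepsize sequence (Lemma~\ref{lemma:easy_bounds}), the telescoping of the one-step inequality is carried out by Abel summation, so that the coefficients $\tfrac{1}{\gamma_k}-\tfrac{1}{\gamma_{k-1}}$ multiplying $\|x^k-x^*\|^2$ are non-negative; each $\|x^k-x^*\|^2$ is therefore upper bounded by $D^2$, and the lower bound $\gamma_{K-1}\ge \tfrac{1}{2c_{K-1}\tilde L}$ from Lemma~\ref{lemma:easy_bounds} yields the leading term $2c_{K-1}\tilde L D^2/K$. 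The key observation is that $D^2$ is used \emph{purely} as a uniform upper bound on the squared distances $\|x^k-x^*\|^2$: the argument never invokes a constraint set or a projection, only such a bound.

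The third and final step is to invoke Proposition~\ref{prop:strong_convex_bound_2}, which guarantees $\|x^k-x^*\|^2\le D_{\max}^2$ almost surely for every $k$, with $D_{\max}^2$ a \emph{deterministic} constant depending only on the initialization, the constants $\mu_{\min}, L_{\max}$, and on $\gamma_b, c_0, \hat\sigma^2_{B,\max}$. Because $D_{\max}^2$ is non-random and dominates every $\|x^k-x^*\|^2$ almost surely, it may replace $D^2$ throughout the Abel-summation step and be pulled outside the expectation. Substituting $D^2\mapsto D_{\max}^2$ into the estimate of Corollary~\ref{cor:sqrt_smooth} and using $\sum_{k=0}^{K-1}1/\sqrt{k+1}\le 2\sqrt K$ gives exactly $\Exp[f(\bar x^K)-f(x^*)]\le (2\tilde L D_{\max}^2 + 2\hat\sigma^2_B)/\sqrt K$, as claimed.

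I expect the one delicate point — the main obstacle in spirit — to be justifying the replacement $D^2\mapsto D_{\max}^2$ rigorously: one must confirm that the bounded-domain proof nowhere requires a genuine constraint set, only an almost-sure uniform bound on the iterates, and that $D_{\max}^2$ is truly non-random so that it commutes with the expectation. All the genuinely hard probabilistic content, namely the pathwise invariance $\|x^k-x^*\|^2\le D_{\max}^2$, is isolated in Proposition~\ref{prop:strong_convex_bound_2}, which we may assume here; given that invariance, the remainder is a mechanical specialization of Theorem~\ref{thm:SPS_bounded_domain_cvx}.
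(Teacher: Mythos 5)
Your proposal is correct and follows exactly the paper's route: the paper derives Theorem~\ref{thm:no_bound} precisely by combining the almost-sure bound $\|x^k-x^*\|^2\le D^2_{\max}$ of Proposition~\ref{prop:strong_convex_bound_2} with the rate of Theorem~\ref{thm:SPS_bounded_domain_cvx}/Corollary~\ref{cor:sqrt_smooth}, substituting the deterministic $D^2_{\max}$ for the random $D^2$. Your extra care in checking that $D^2$ enters the telescoping argument only as a uniform pathwise bound (so that the deterministic $D^2_{\max}$ can be pulled outside the expectation) is exactly the right justification, which the paper leaves implicit.
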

\begin{remark}[Strong Convexity]
The careful reader might notice that, while we assumed strong convexity, our rate is slower than the optimal $\mathcal{O}(1/K)$. This is due to the adaptive nature of DecSPS. It is indeed notoriously hard to achieve a convergence rate of $\mathcal{O}(1/K)$ for adaptive methods in the strongly convex regime. While further investigations will shed light on this interesting problem, we note that \textit{the result we provide is somewhat unique in the literature}: we are not aware of any adaptive method that enjoys a similar convergence rate without either (a) assuming bounded iterates/gradients or (b) assuming knowledge of the gradient Lipschitz constant or the strong convexity constant.
\end{remark}

\begin{remark}[Comparison with Vanilla SGD]
On a convex problem, the non-asymptotic performance of SGD with a decreasing stepsize $\gamma_k=\eta/\sqrt{k}$ strongly depends on the choice of $\eta$. The optimizer might diverge if $\eta$ is too big for the problem at hand. Indeed, most bounds for SGD, under no access to the gradient Lipschitz constant, display a dependency on the size of the domain and rely on projections after each step. If one applies the method in the unconstrained setting, such convergence rates technically do not hold, and tuning is sometimes necessary to retrieve stability and good performance. Instead, for DecSPS, simply by adding a small regularizer, the method is guaranteed to converge at the non-asymptotic rate we derived even in the unconstrained setting.
\end{remark}

\vspace{-2mm}
\subsection{Extension to the non-smooth setting}
\label{sec:non-smooth}
\vspace{-2mm}

For any $\SC\subseteq[n]$, we denote in this section by $g_{\SC}(x)$ the subgradient of $f_{\SC}$ evaluated at $x$. We discuss the extension of DecSPS to the non-smooth setting.

A straightforward application of DecSPS leads to a stepsize $\gamma_k$ which is no longer lower bounded~(see Lemma~\ref{lemma:easy_bounds}) by the positive quantity $\min\left\{\frac{1}{2c_k L_{\max}},\frac{c_0\gamma_b}{c_k}\right\}$. Indeed, the gradient Lipschitz constant in the non-smooth case is formally $L_{\max} = \infty$. Hence, $\gamma_k$ prescribed by DecSPS can get arbitrarily small\footnote{Take for instance the deterministic setting one-dimensional setting $f(x) = |x|$. As $x\to 0$, the stepsize prescribed by DecSPS converges to zero. This is not the case e.g. in the quadratic setting.} for finite $k$. One easy solution to the problem is to enforce a lower bound, and adopt a new proof technique. Specifically we propose the following:
\begin{tcolorbox}
\begin{equation}
\tag{DecSPS-NS}
{\small
    \gamma_k:=\frac{1}{c_k}\cdot\min\left\{\max\left\{c_0\gamma_{\ell}, \frac{f_{\SC_k}(x^k)-\ell^*_{\SC_k}}{\|g_{\SC_k}(x^k)\|^2}\right\}, c_{k-1}\gamma_{k-1}\right\},}
\end{equation}
\end{tcolorbox}
where $c_k\ne 0$ for every $k\ge0$, $\gamma_\ell\ge\gamma_b$ is a small positive number and all the other quantities are defined as in DecSPS. In particular, as for DecSPS, we set $c_{-1}=c_0$ and $\gamma_{-1} = \gamma_b$. Intuitively, $\gamma_k$ is selected to live in the interval $[c_0\gamma_\ell/c_k, c_0\gamma_b/c_k]$~(see proof in~\S\ref{sec:app_proof4}, appendix), but has subgradient-dependent adaptive value. In addition, this stepsize is enforced to be monotonically decreasing.

\begin{restatable}[]{thm}{decSPScvxNS}
\label{thm:SPS_bounded_domain_cvx_NS} For any non-decreasing positive sequence $(c_k)_{k=0}^\infty$, consider SGD with DecSPS-NS. Assume that each $f_i$ is convex and lower bounded. We have
\vspace{-2mm}
\begin{equation}
 \Exp[f(\bar{x}^K)-f(x^*)]\leq \frac{c_{K-1} D^2}{\gamma_\ell c_0 K } + \frac{1}{K} \sum_{k=0}^{K-1}\frac{c_0 \gamma_b G^2}{c_k} ,
\end{equation}
where $D^2 :=\max_{k \in [K-1]}\|x^k-x^*\|^2$ and $G^2:=\max_{k \in [K-1]}\|g_{\SC_k}(x^k)\|^2$.
\end{restatable}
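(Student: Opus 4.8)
The plan is to adapt the classical stochastic subgradient-method argument to the adaptive, \emph{decreasing} stepsize DecSPS-NS, the central difficulty being the correlation between $\gamma_k$ and the stochastic subgradient $g_{\SC_k}(x^k)$ that produces the bias analyzed in \S\ref{sec:bias}. First I would establish the non-smooth analogue of Lemma~\ref{lemma:easy_bounds}: by induction on the recursion defining $\gamma_k$, using $c_{-1}=c_0$, $\gamma_{-1}=\gamma_b$ and the fact that $(c_k)$ is non-decreasing, one obtains the sandwich $\frac{c_0\gamma_\ell}{c_k}\le\gamma_k\le\frac{c_0\gamma_b}{c_k}$ together with $\gamma_k\le\gamma_{k-1}$. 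The upper bound follows because $c_k\gamma_k\le c_{k-1}\gamma_{k-1}\le c_0\gamma_b$; the monotonicity of the product $c_k\gamma_k$ gives $\gamma_k\le\gamma_{k-1}$; and the lower bound comes from the $\max\{c_0\gamma_\ell,\cdot\}$ flooring. This last point is the whole reason for introducing DecSPS-NS: in the smooth proof the lower bound $\gamma_k\ge\frac{1}{2c_kL_{\max}}$ was supplied by $L$-smoothness, but here $L_{\max}=\infty$, so the floor $c_0\gamma_\ell/c_k$ must be enforced by hand.

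Next I would write the one-step expansion $\|x^{k+1}-x^*\|^2=\|x^k-x^*\|^2-2\gamma_k\langle g_{\SC_k}(x^k),x^k-x^*\rangle+\gamma_k^2\|g_{\SC_k}(x^k)\|^2$ and use the subgradient inequality $\langle g_{\SC_k}(x^k),x^k-x^*\rangle\ge f_{\SC_k}(x^k)-f_{\SC_k}(x^*)$ (convexity of $f_{\SC_k}$). The crucial move---which is precisely what sidesteps the bias of \S\ref{sec:bias}---is to \emph{divide through by $\gamma_k$}, so that the suboptimality gap appears with coefficient one, decoupled from the adaptive stepsize, while the distance terms inherit the weight $1/\gamma_k$:
\[
2\bigl(f_{\SC_k}(x^k)-f_{\SC_k}(x^*)\bigr)\le \tfrac{1}{\gamma_k}\bigl(\|x^k-x^*\|^2-\|x^{k+1}-x^*\|^2\bigr)+\gamma_k\|g_{\SC_k}(x^k)\|^2.
\]

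Summing over $k=0,\dots,K-1$, I would telescope the first term by Abel summation: since $\gamma_k$ is non-increasing the weights $1/\gamma_k$ are non-decreasing, hence the cross-differences $\tfrac{1}{\gamma_k}-\tfrac{1}{\gamma_{k-1}}$ are nonnegative, and bounding each $\|x^k-x^*\|^2\le D^2$ collapses the sum to $D^2/\gamma_{K-1}\le c_{K-1}D^2/(c_0\gamma_\ell)$ via the lower stepsize bound. The second term is controlled by the upper stepsize bound and $G^2$, giving $\sum_k\gamma_k\|g_{\SC_k}(x^k)\|^2\le\sum_k c_0\gamma_b G^2/c_k$. Under the bounded iterates/gradients assumption $D^2,G^2<\infty$, so this pathwise estimate is meaningful.

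Finally I would take expectations of the resulting pathwise inequality. Because $\SC_k$ is drawn independently of $x^k$, the tower property yields $\Exp[f_{\SC_k}(x^k)]=\Exp[f(x^k)]$ and $\Exp[f_{\SC_k}(x^*)]=f(x^*)$, so the left side becomes $2\sum_k(\Exp[f(x^k)]-f(x^*))$; applying Jensen's inequality to $\bar x^K=\frac1K\sum_k x^k$ and dividing by $2K$ delivers the claimed bound (the factor $\tfrac12$ I pick up only sharpens it). The main obstacle is exactly the $\gamma_k$--$g_{\SC_k}$ correlation: a naive conditional-expectation argument that keeps $\gamma_k$ multiplying the gap fails in the same way as the counterexample of \S\ref{sec:bias}. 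It is the division-by-$\gamma_k$ combined with the monotone telescoping---made possible by the enforced lower bound $c_0\gamma_\ell/c_k$---that lets the estimate go through without ever taking the expectation of a product of correlated random variables.
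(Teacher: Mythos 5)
Your proposal is correct and follows essentially the same route as the paper: the sandwich bounds $\tfrac{c_0\gamma_\ell}{c_k}\le\gamma_k\le\tfrac{c_0\gamma_b}{c_k}$ established by induction, the one-step expansion with the subgradient inequality, division by $\gamma_k$, telescoping via the monotonicity of $1/\gamma_k$ to collect $D^2/\gamma_{K-1}\le c_{K-1}D^2/(c_0\gamma_\ell)$, the bound $\gamma_k\|g_{\SC_k}(x^k)\|^2\le c_0\gamma_b G^2/c_k$, and finally expectation plus Jensen. The only cosmetic difference is the order in which you bound the $\gamma_k^2\|g_{\SC_k}(x^k)\|^2$ term relative to the division by $\gamma_k$, and your retained factor of $2$ indeed only sharpens the stated constant.
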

\vspace{-2mm}
One can then easily derive an $\mathcal{O}(1/\sqrt{k})$ convergence rate. This is presented in \S\ref{sec:app_proof4}~(appendix).

\vspace{-3mm}

\section{Numerical Evaluation}
\label{sec:exp}
\vspace{-2mm}

We evaluate the performance of DecSPS with $c_k = c_0\sqrt{k+1}$ on binary classification tasks, with regularized logistic loss $f(x) = \frac{1}{n}\sum_{i=1}^n \log(1+\exp(y_i\cdot a_i^\top x))+\frac{\lambda}{2} \|x\|^2$, where $a_i\in\mathbb{R}^d$ is the feature vector for the $i$-th datapoint and $y_i\in\{-1,1\}$ is the corresponding binary target.

We study performance on three datasets: (1) a Synthetic
Dataset, (2) The A1A dataset~\citep{chang2011libsvm} and (3) the
Breast Cancer dataset~\citep{Dua:2019}. We choose
different regularization levels and batch sizes bigger than 1.
Details are reported in ~\S\ref{app:exp}, and the code is available at \url{https://github.com/aorvieto/DecSPS}. \\At the batch sizes and regularizer levels we choose, the problems do not satisfy interpolation. Indeed, running full batch gradient descent yields $f^*>0$. While running SPS$_{\max}$ on these problems (1) does not guarantee convergence to $f^*$ and (2) requires full knowledge of the set of optimal function values $\{f^*_\SC\}_{|\SC|=B}$, in DecSPS we can simply pick the lower bound $0=\ell^*_\SC \le f^*_\SC$ for every $\SC$. Supported by Theorems~\ref{thm:SPS_bounded_domain_cvx}~\&~\ref{thm:no_bound}~\&~\ref{thm:SPS_bounded_domain_cvx_NS}, we expect SGD with DecSPS to converge to the minimum $f^*$.

\vspace{-3mm}
\paragraph{Stability of DecSPS.} DecSPS has two hyperparameters: the upper bound $\gamma_b$ on the first stepsize and the scaling constant $c_0$. While Thm.~\ref{thm:SPS_bounded_domain_cvx_NS} guarantees convergence for any positive value of these hyperparameters, the result of Thm.~\ref{thm:SPS_bounded_domain_cvx} suggests that using $c_0=1$ yields the best performance under the assumption that $\hat \sigma^2_B\ll \tilde L D^2$~(e.g. reasonable distance of initialization from the solution, and $L_{\max} > 1/\gamma_b$). In Fig.~\ref{fig:sensitivity}, we show on the synthetic dataset that (1) $c_0=1$ is indeed the best choice in this setting and (2) the performance of SGD with DecSPS is almost independent of $\gamma_b$. Similar findings are reported and commented in Figure~\ref{fig:A1A_Breast_tuning}~(Appendix) for the other datasets. Hence, \textit{for all further experiments}, we choose the hyperparameters $\gamma_b=10, c_0=1$.

\vspace{-3mm}

\begin{wrapfigure}[12]{r}{7 cm}
\vspace{-0.5cm}
\centering
    \centering
    \includegraphics[width = 0.23\textwidth]{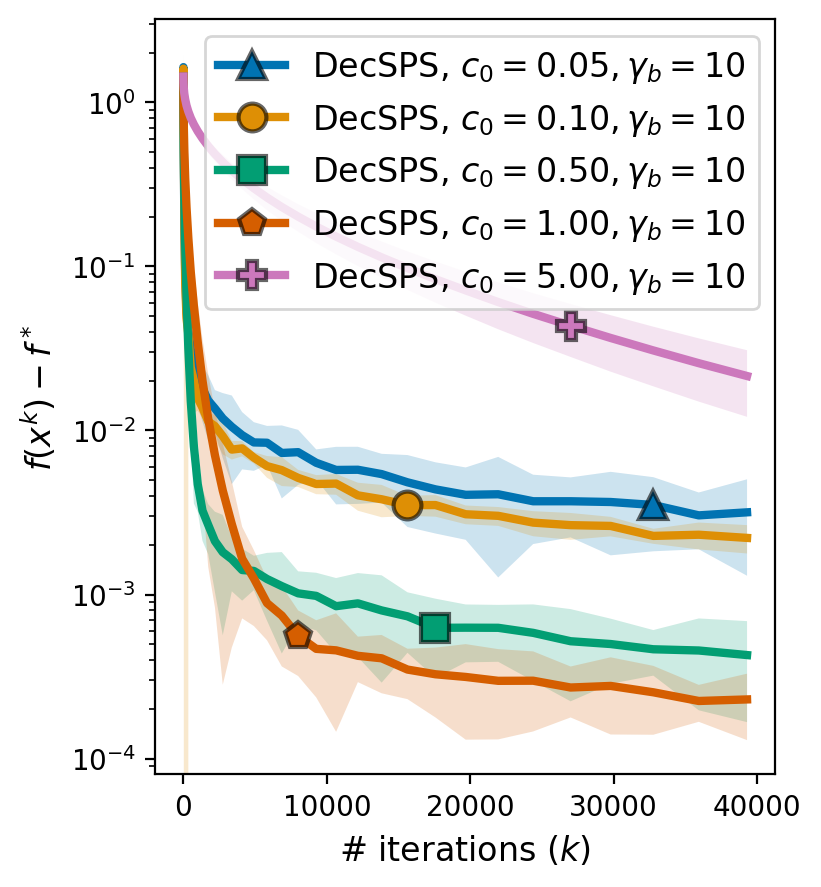}
     \includegraphics[width = 0.23\textwidth]{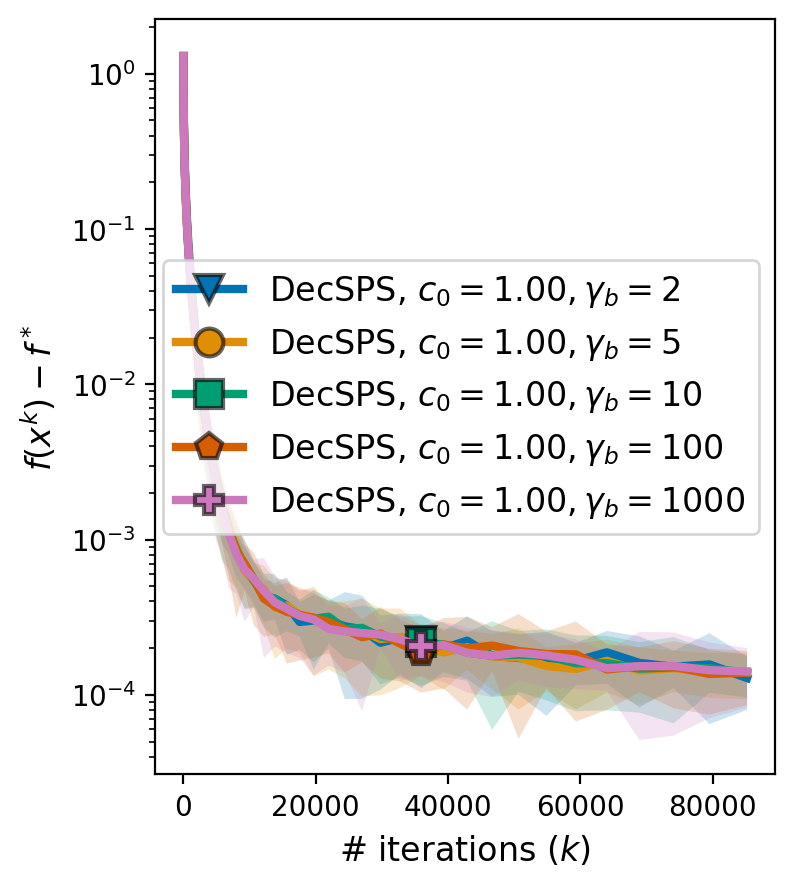}
    \vspace{-2.5mm}
    \caption{\small DecSPS~($c_k=c_0\sqrt{k+1}$) sensitivity to hyperparameters on the \textit{Synthetic Dataset}, with $\lambda=0$. Repeated 10 times and plotted is mean and std.}
    \vspace{-2mm}
    \label{fig:sensitivity}
\end{wrapfigure}
\paragraph{Comparison with vanilla SGD with decreasing stepsize.} We compare the performance of DecSPS against the classical decreasing SGD stepsize $\eta/\sqrt{k+1}$, which guarantees convergence to the exact solution at the same asymptotic rate as DecSPS. We show that, while the asymptotics are the same, DecSPS with hyperparameters $c_0=1, \gamma_b=10$ performs competitively to a fine-tuned $\eta$ --- where crucially the optimal value of $\eta$ depends on the problem. This behavior is shown on all the considered datasets, and is reported in Figures~\ref{Fig_SGDVsDecSPS_A1A}~(\textit{Breast} and \textit{Synthetic} reported in the appendix for space constraints). If lower regularization~($1e-4, 1e-6$) is considered, then DecSPS can still match the performance of tuned SGD --- but further tuning is needed~(see Figure~\ref{fig_light_reg}. Specifically, since the non-regularized problems do not have strong curvature, we found that DecSPS works best with a much higher $\gamma_b$ parameter and $c_0 = 0.05$.
\vspace{-3mm}
\paragraph{DecSPS yields a truly adaptive stepsize.} We inspect the value of $\gamma_k$ returned by DecSPS, shown in Figures~\ref{Fig_SGDVsDecSPS_A1A}~\&~\ref{Fig_SGDVsDecSPS_app}~(in the appendix). Compared to the vanilla SGD stepsize $\eta/\sqrt{k+1}$, a crucial difference appears: $\gamma_k$ decreases faster than $\mathcal{O}(1/\sqrt{k})$. This showcases that, while the factor $\sqrt{k+1}$ can be found in the formula of DecSPS\footnote{We pick $c_k=c_0\sqrt{k+1}$, as suggested by Cor.~\ref{cor:sqrt_smooth}~\&~\ref{cor:sqrt_NS} }, the algorithm structure provides additional adaptation to curvature. Indeed, in~(regularized) logistic regression, the local gradient Lipschitz constant increases as we approach the solution. Since the optimal stepsize for steadily-decreasing SGD is $1/(L\sqrt{k+1})$, where $L$ is the global Lipschitz constant~\cite{ghadimi2013stochastic}, it is pretty clear that $\eta$ should be decreased over training for optimal converge~(as $L$ effectively increases). This is precisely what DecSPS is doing.
\vspace{-4mm}
\paragraph{Comparison with AdaGrad stepsizes.} Last, we compare DecSPS with another adaptive coordinate-independent stepsize with strong theoretical guarantees: the norm version of AdaGrad~(a.k.a. AdaGrad-Norm, AdaNorm), which guarantees the exact solution at the same asymptotic rate as DecSPS~\cite{ward2019adagrad}. AdaGrad-norm at each iteration updates the scalar $b_{k+1}^2=b_{k}^2+\|\nabla f_{\SC_k}(x_k)\|^2$ and then selects the next step as $x_{k+1}=x_{k}-\frac{\eta}{b_{k+1}}\nabla f_i(x_k)$. Hence, it has tuning parameters $b_0$ and $\eta$. In Fig.~\ref{Fig_SGDVsDecSPS_A1A} we show that, on the Breast Cancer dataset, after fixing $b_0=0.1$ as recommended in~\citep{ward2019adagrad}~(see their Figure 3), tuning $\eta$ cannot quite match the performance of DecSPS. This behavior is also observed on the other two datasets we consider~(see Fig.~\ref{Fig_SGDVsADA_sklearn} in the Appendix). Last, in Fig.~\ref{fig:synthetic_ada_tuning_1}\&~\ref{fig:synthetic_ada_tuning_2} in the Appendix, we show that further tuning of $b_0$ likely does not yield a substantial improvement.
\vspace{-2mm}

\begin{figure}[ht]
\vspace{1mm}
    \centering
\textbf{\scriptsize \qquad \qquad\quad Regularized A1A -- SGD Vs DecSPS \quad \qquad\quad Regularized Breast Cancer -- AdaGrad-Norm VS DecSPS}\\
    \includegraphics[height=0.24\linewidth]{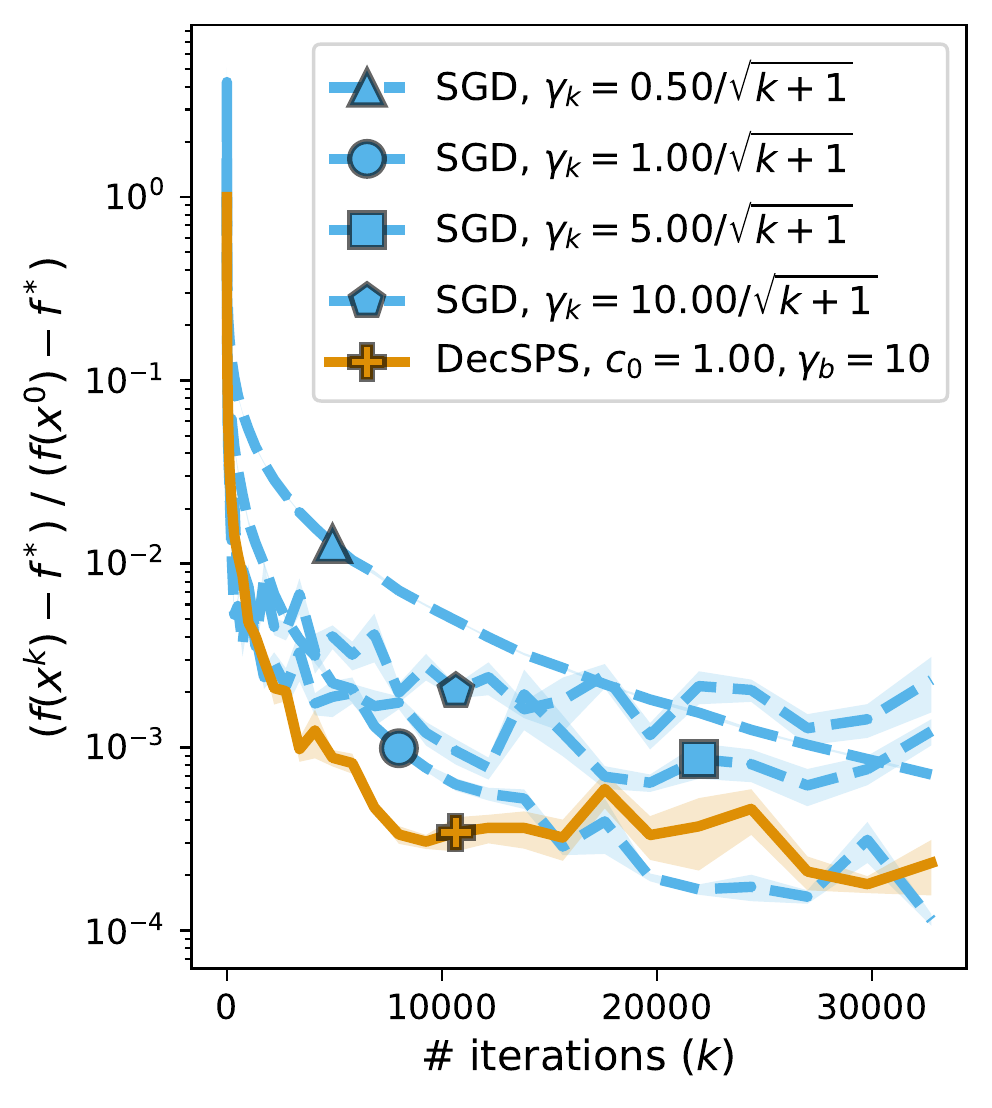}
    \includegraphics[height=0.24\linewidth]{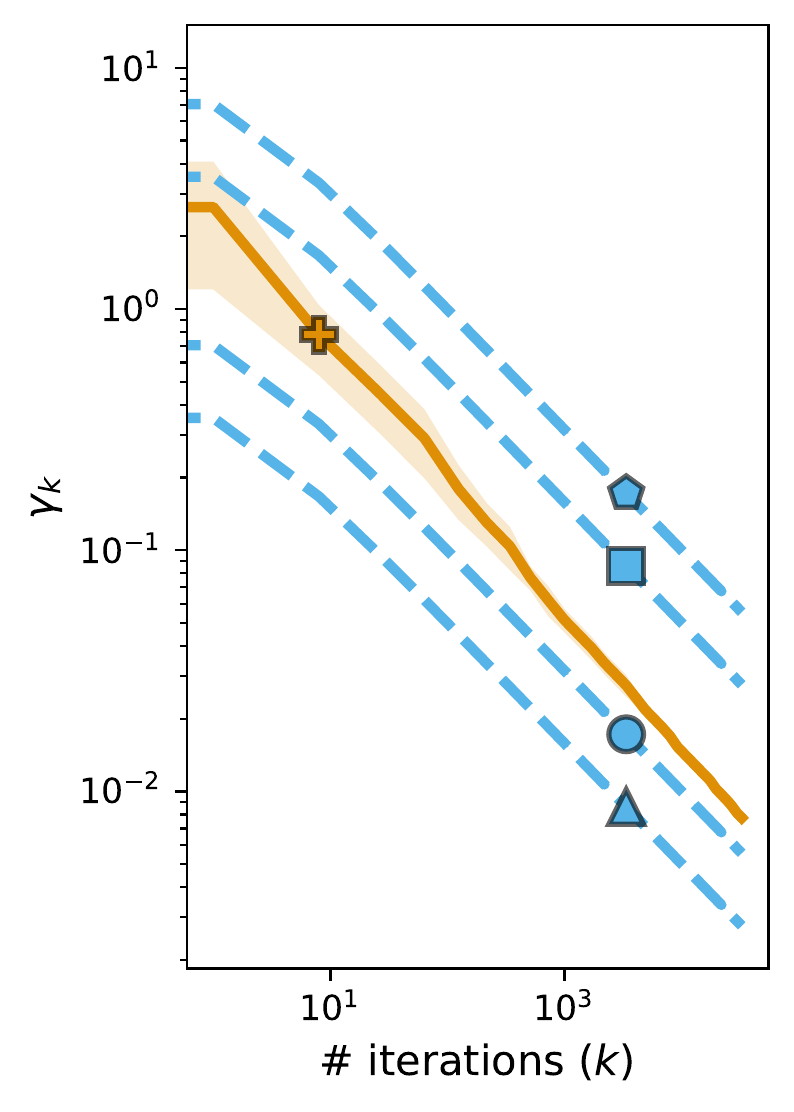}
    \includegraphics[height=0.24\linewidth]{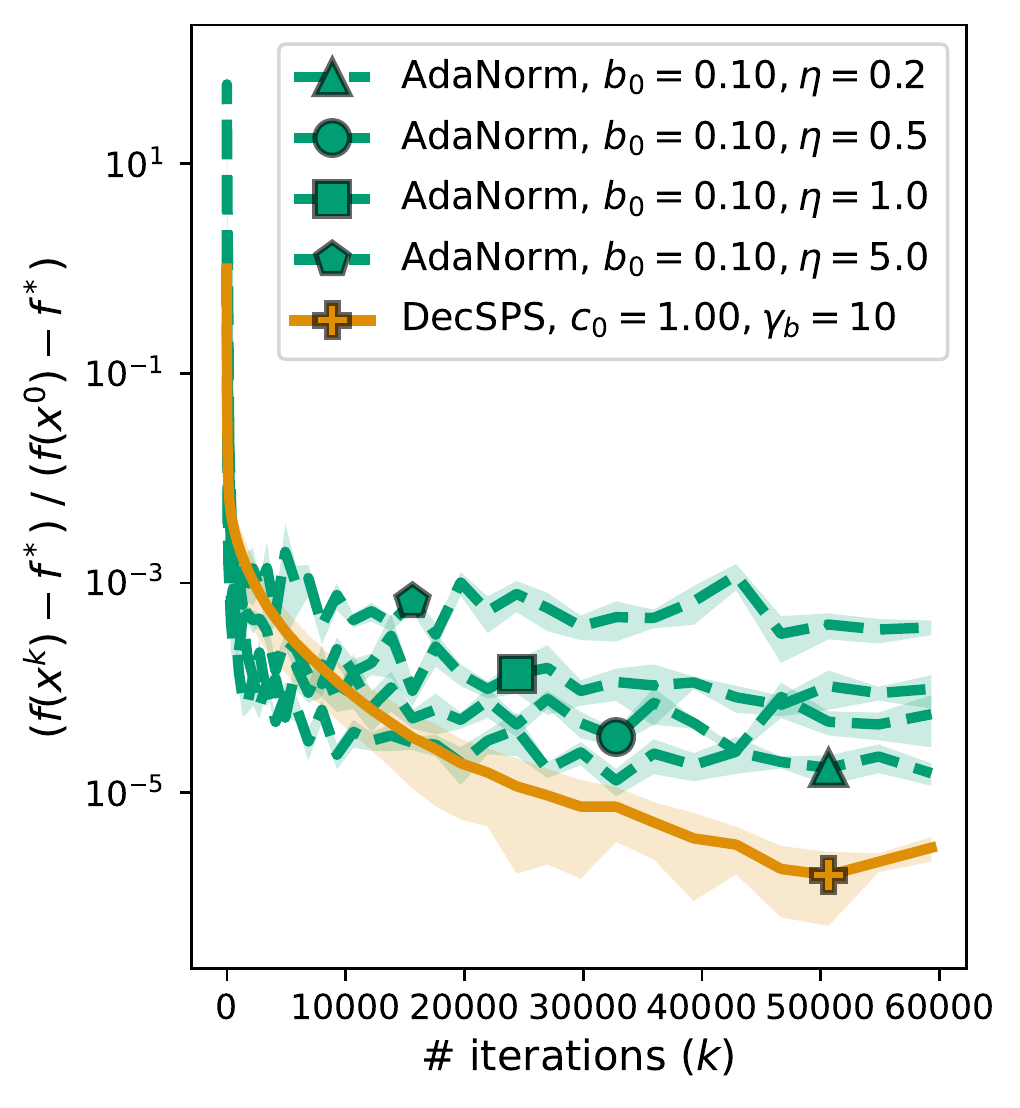}
    \includegraphics[height=0.24\linewidth]{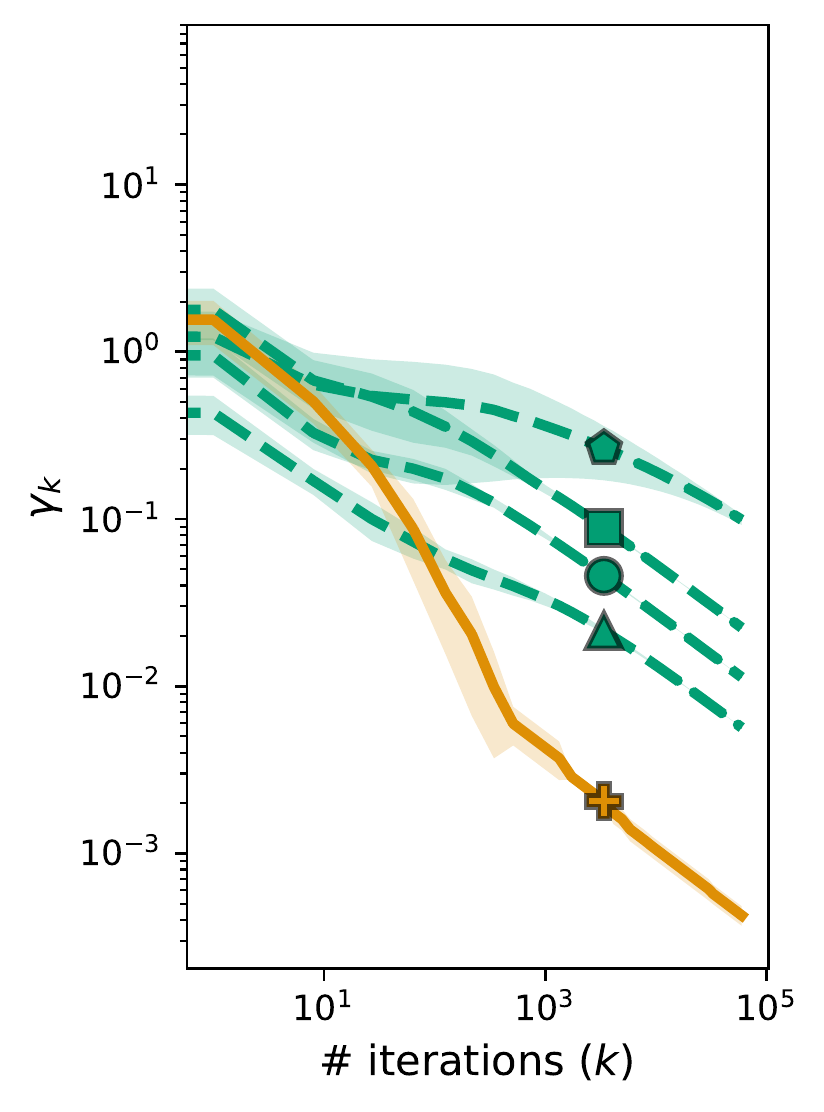}
    \vspace{-2mm}
    \caption{\small \textbf{Left: } performance of DecSPS, on the A1A Dataset~($\lambda = 0.01$).\textbf{ Right:} performance of DecSPS on the Breast Cancer Dataset~($\lambda = 1e-1$). Further experiments can be found in \S\ref{app:exp}~(appendix).}
    \label{Fig_SGDVsDecSPS_A1A}
    \vspace{-1mm}
\end{figure}

\vspace{-4mm}
\paragraph{Comparison with Adam and AMSgrad without momentum.} In Figures~\ref{Fig_SPSVsAdam}\&\ref{Fig_SGDVsAdam_A1A}\&\ref{Fig_SGDVsAdam_Breast} we compare DecSPS with Adam~\cite{kingma2014adam} and AMSgrad~\cite{reddi2019convergence} on the A1A and Breast Cancer datasets. Results show that DecSPS with the usual hyperparameters is comparable to the fine-tuned version of both these algorithms --- which however do not enjoy convergence guarantees in the unbounded domain setting.
\begin{figure}[ht]
    \centering
\textbf{\scriptsize \qquad \qquad Regularized A1A -- DecSPS Vs Adam \qquad \qquad \quad Regularized Breast Cancer -- DecSPS Vs AMSgrad}\\
    \includegraphics[height=0.24\linewidth]{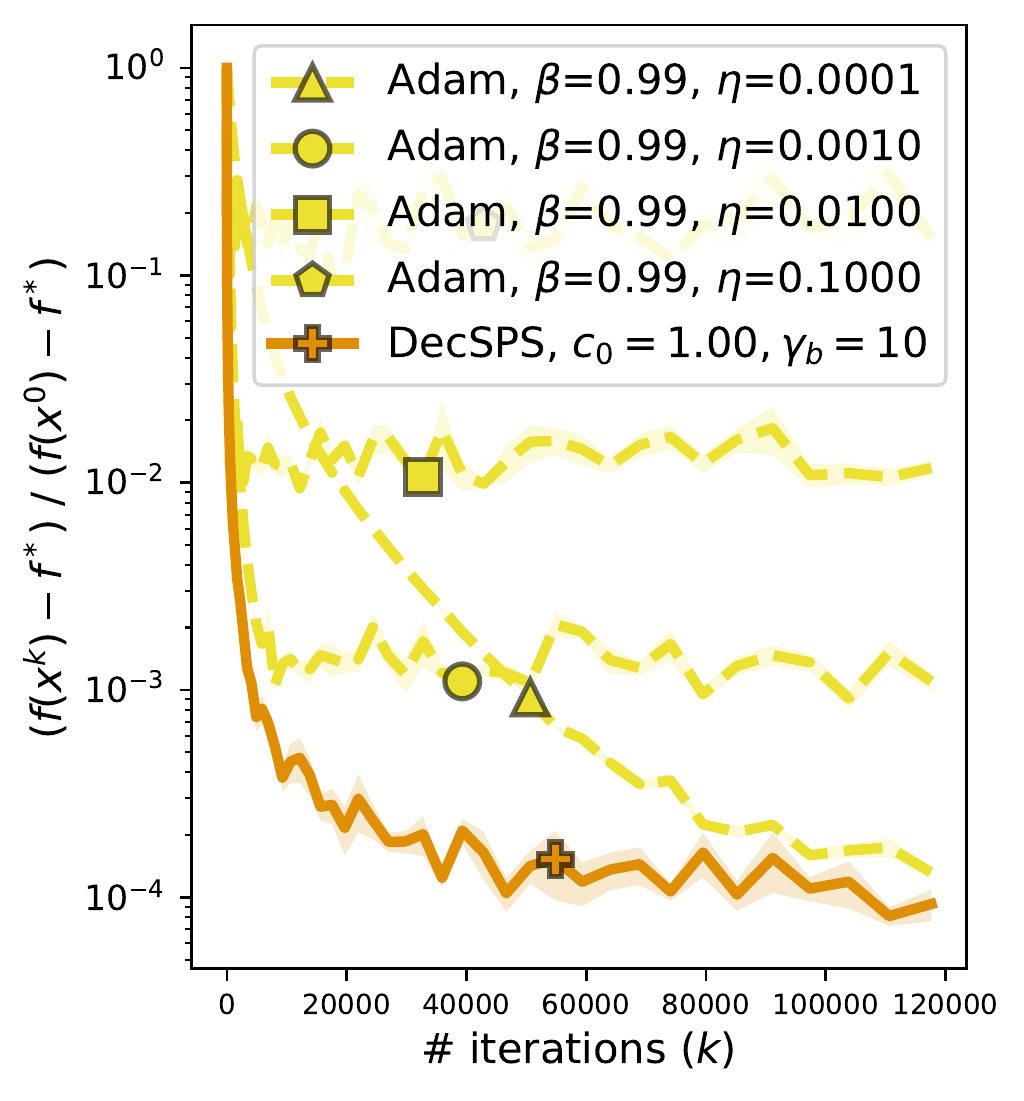}
    \includegraphics[height=0.24\linewidth]{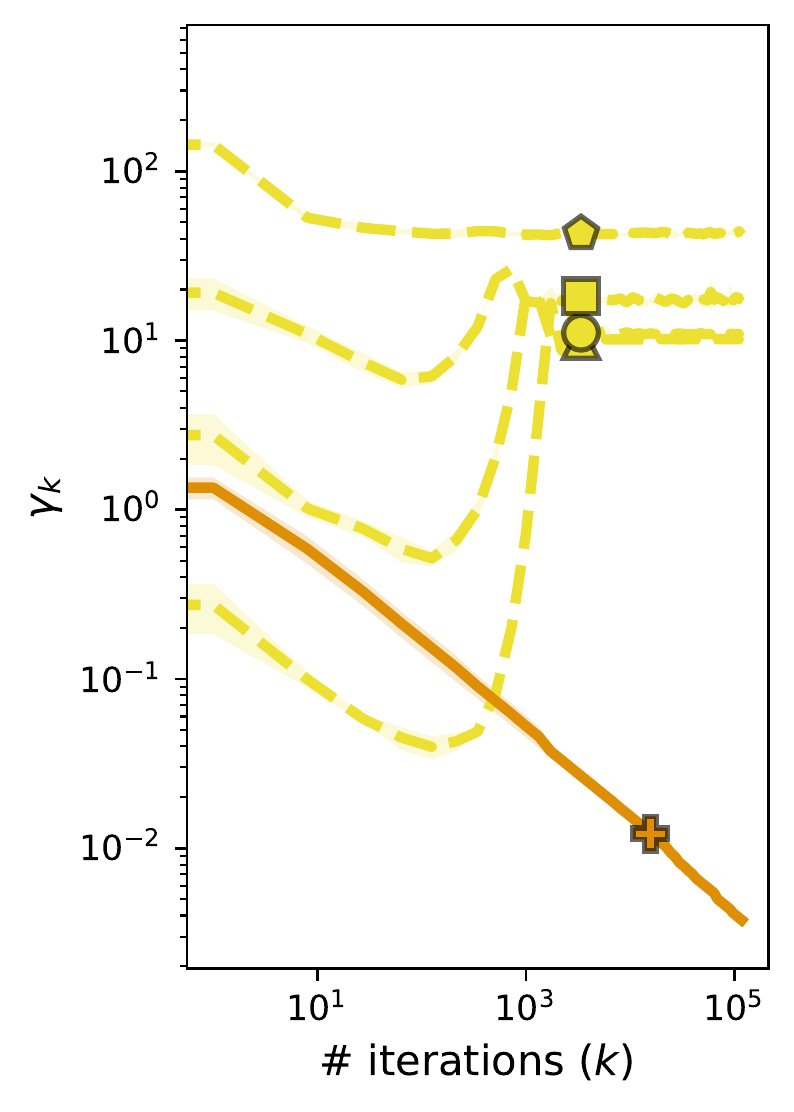}
\includegraphics[height=0.24\linewidth]{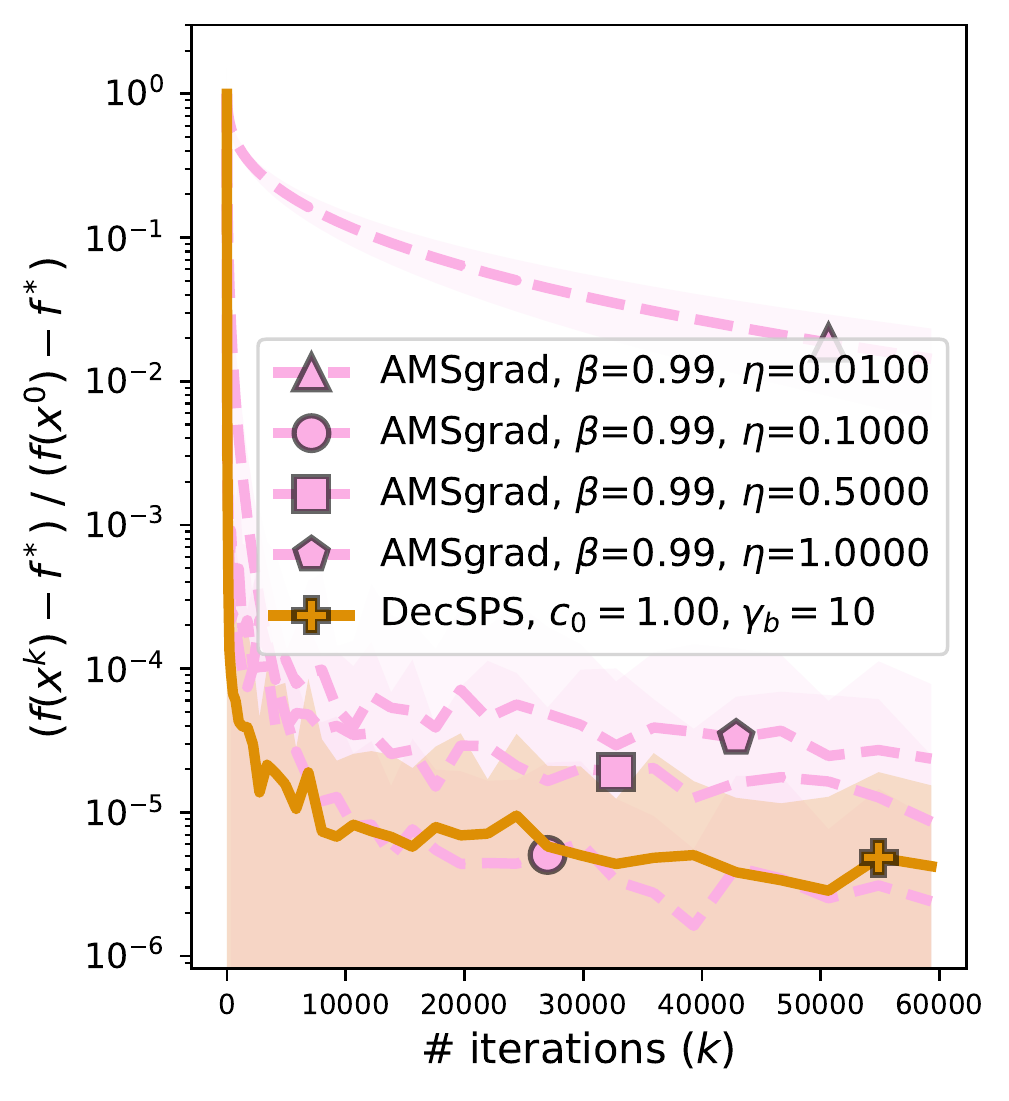}
    \includegraphics[height=0.24\linewidth]{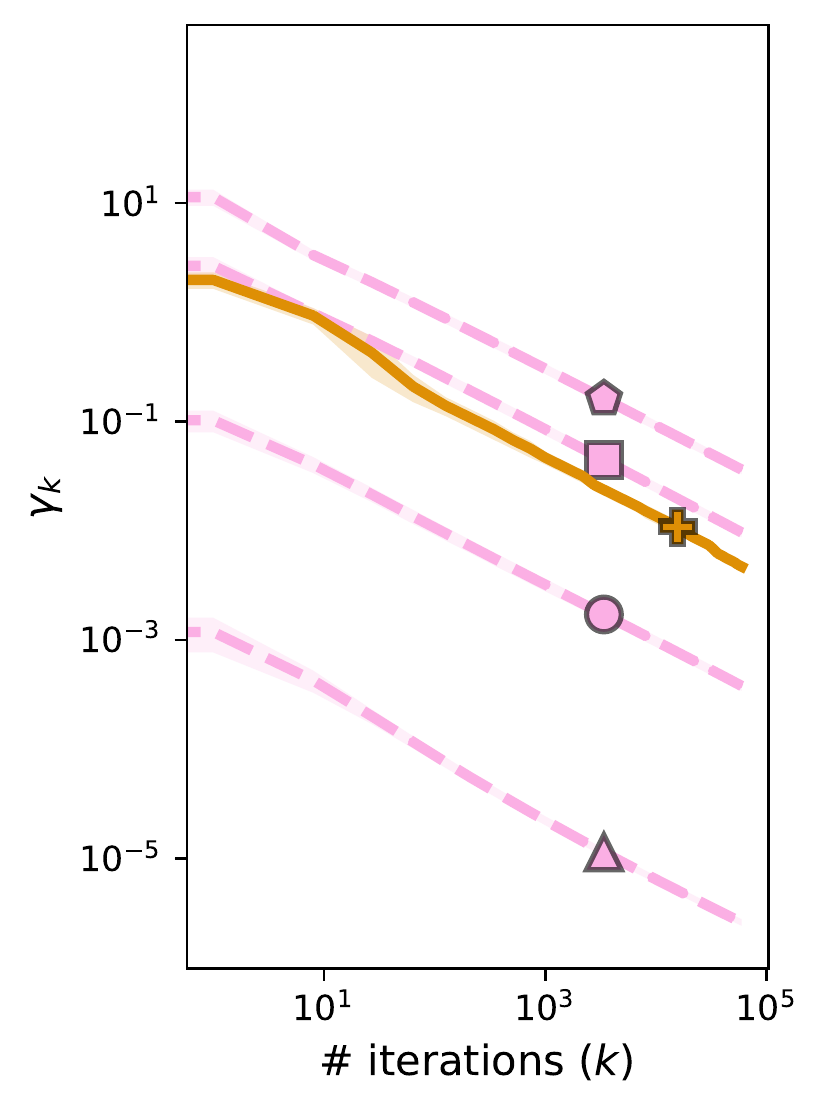}

    \vspace{-2mm}
\caption{\small \textbf{Left}: Performance of Adam~(with fixed stepsize and no momentum) and \textbf{Right}: AMSgrad~(with sqrt decreasing stepsize and no momentum) compared to DecSPS on the A1A and Breast Cancer dataset, respectively. Plots comparing the performance of Adam with DecSPS on the Breast Cancer Dataset can be found in Figure~\ref{Fig_SGDVsAdam_Breast}, and plots comparing  AMSgrad with DecSPS on the A1A Dataset can be found in Figure~\ref{Fig_SGDVsAdam_A1A}. Plotted is also the average stepsize~(each parameter evolves with a different stepsize).}    
\label{Fig_SPSVsAdam}
\end{figure}
\vspace{-4mm}
\section{Conclusions and Future Work} \vspace{-2mm}
We provided a practical variant of SPS~\citep{loizou2021stochastic}, which converges to the true problem solution without the interpolation assumption in convex stochastic problems --- matching the rate of AdaGrad. If in addition, strong convexity is assumed, then we show how, in contrast to current results for AdaGrad, the bounded iterates assumption can be dropped. The main open direction is a proof of a faster rate $\mathcal{O}(1/K)$ under strong convexity. Other possible extensions of our work include using the proposed new variants of SPS with accelerated methods, studying further the effect of mini-batching and non-uniform sampling of DecSPS, and extensions to the distributed and decentralized settings.

\section*{Acknowledgements}
This work was partially supported by the Canada CIFAR AI Chair Program. Simon Lacoste-Julien is a CIFAR Associate Fellow in the Learning in Machines \& Brains program.

\bibliographystyle{abbrvnat}
\bibliography{paper}

\newpage
\appendix

\begin{center}
\Large
    \textit{Supplementary Material}  \\ Dynamics of SGD with Stochastic Polyak Stepsizes:\\ Truly Adaptive Variants and Convergence to Exact Solution
 \end{center}

The appendix is organized as follows:
\begin{enumerate}
\item In \S\ref{sec:app_related_work} we provide a more detail comparison with closely related works.
\item In \S\ref{sec:app_pre} we present some technical preliminaries.
    \item In \S\ref{sec:app_proof1}, we present convergence guarantees for SPS$_{\text{max}}$ after replacing $f_S^*$ with $\ell_S^*$ (lower bound).
    \item In \S\ref{sec:app_proof2}, we discuss the lack of convergence of SPS$_{\text{max}}$ in the non-interpolated setting.
    \item In \S\ref{sec:app_proof3}, we discuss convergence of DecSPS, our convergent variant of SPS.
    \item In \S\ref{sec:app_proof4}, we discuss convergence of DecSPS-NS, our convergent variant of SPS in the non-smooth setting.
    \item In \S\ref{app:exp} we provide some additional experimental results and describe the datasets in detail.
\end{enumerate}
\section{Comparison with closely related work}
\label{sec:app_related_work}

In this section we present a more detailed comparison to closely related works on stochastic variants of the Polyak stepsize. We start with the work of \citet{asi2019stochastic}, and then continue with a brief presentation of other papers (already presented in \citet{loizou2021stochastic}).

\subsection[]{Comparison with~\citet{asi2019stochastic}} \citet{asi2019stochastic} proposed the following adaptive method for solving Problem~\eqref{MainProb} \textit{under the interpolation assumption} $\inf_{x\in\mathbb{R}^d} f_\SC(x) = f_\SC(x^*)=0$ for all subsets $\SC$ of $[n]$ with $|\SC|=B$:
\begin{equation}
    x^{k+1} = x^k - \min\left\{\alpha_k,\frac{f_{\SC_k}(x^k)}{\|\nabla f_{\SC_k}(x^k)\|^2}\right\}\nabla f_{\SC_k}(x^k),
    \label{eq:duchi_step}
\end{equation}
where $\alpha_k = \alpha_0 k^{-\beta}$ for some $\beta 
\in \mathbb{R} $, is a polynomially decreasing/increasing sequence. We provide here a full comparison of this stepsize with SPS$_{\text{max}}$ and DecSPS.

\paragraph{Comparison with the adaptive stepsizes in \citet{loizou2021stochastic} and our DecSPS.} In~\citet{loizou2021stochastic}, the proposed SPS$_{\max}$ stepsize is
\begin{equation}
    \gamma_k = \min \left\{ \frac{f_{\SC_k}(x^k)-f_{\SC_k}^*}{c\|\nabla f_{\SC_k}(x^k)\|^2}, \gamma_b \right\}.
\end{equation}
This stepsize is similar to the one in~\citet{asi2019stochastic}: in both, a Polyak-like stochastic stepsize is bounded from above in order to guarantee convergence. However there are crucial differences.
\begin{itemize}[leftmargin=*]
    \item SPS$_{\max}$~\cite{loizou2021stochastic} can be applied to non-interpolated problems and leads to fast convergence to a ball around the solution in the non-interpolated setting~(see Theorem~\ref{thm:loizou}). Instead, ~\citet{asi2019importance} only formulated and studied Eq.~\eqref{eq:duchi_step} in the interpolated setting.
    \item As we will see in the next paragraph, one can formulate few conditions under which it is possible to derive linear convergence rates for Eq.~\eqref{eq:duchi_step} in the interpolated setting. As can be easily seen from Theorem~\ref{thm:loizou}, \textit{SPS$_{\max}$ has similar convergence guarantees but works under a more standard/restrictive set of assumptions}. In particular, in the interpolated setting, while~\citet{asi2019stochastic} require some specific assumptions on the noise statistics~(see next paragraph), the rates in~\citet{loizou2021stochastic} can be applied without the need for, e.g., probabilistic bound on the gradient magnitude.
\end{itemize}

In this paper, starting from the SPS$_{\max}$ algorithm we propose the following stepsize for convergence to the \textit{exact solution} in the \textit{non-interpolated setting}:

\begin{equation}
\tag{DecSPS}
{\small
 \gamma_k:=\frac{1}{c_k}\min\left\{\frac{f_{\SC_k}(x^k)-\ell^*_{\SC_k}}{\|\nabla f_{\SC_k}(x^k)\|^2}, \ c_{k-1}\gamma_{k-1}\right\},}
\end{equation}
where $c_k$ is an increasing sequence~(e.g. $c_k=\sqrt{k+1}$, see Theorem~\ref{thm:no_bound}), and $\ell_{\SC_k}^*$ is any lower bound on $f_{\SC_k}^*$. At initialization, we set $c_{-1}=c_0$ and $\gamma_-1 = \gamma_b>0$.

We now compare DecSPS with Eq.~\eqref{eq:duchi_step} and our results with the rates in~\citep{asi2019stochastic}.
\begin{itemize}[leftmargin=*]
    \item \textit{Convergence rates}: The form of Eq.~\eqref{eq:duchi_step} and the convergence guarantees of~\cite{asi2019stochastic} are \textit{restricted to the interpolated setting. Instead, in this paper we focus on the non-interpolated setting}: using DecSPS we provided the first stochastic adaptive optimization method that converges in the non-interpolated setting to the exact solution without restrictive assumptions like bounded iterates/gradients.
    \item \textit{Inspection of the stepsize}: DecSPS provides a version of SPS where $\gamma_k$ is steadily decreasing and is upper bounded by the decreasing quantity $c_0\gamma_b/c_k$, where $c_k = \sqrt{k+1}$ yields the optimal asymptotic rate~(Theorem~\ref{thm:no_bound}) . Hence, DecSPS can be compared to Eq.~\eqref{eq:duchi_step} for $\alpha_k = \alpha_0/\sqrt{k+1}$. However, note that there are two fundamental differences: First, in DecSPS we have that $\gamma_k\ge\gamma_{k-1}$~(see Lemma~\ref{lemma:easy_bounds}), a feature which Eq.~\eqref{eq:duchi_step} does not have. Secondly, compared to our DecSPS, the stepsize in Eq.~\eqref{eq:duchi_step} with $\alpha_k$ decreasing polynomially is \textit{asymptotically non-adaptive}. Indeed, assuming that each $f_i$ has $L_i$-Lipschitz gradients and that each $f_{\SC}^*$ is non-negative, we have~(see~\cite{loizou2021stochastic}) that
\begin{equation}
     \frac{f_{\SC_k}(x^k)}{\|\nabla f_{\SC_k}(x^k)\|^2}\ge\frac{f_{\SC_k}(x^k)-f_{\SC_k}^*}{\|\nabla f_{\SC_k}(x^k)\|^2}\ge\frac{1}{2L_{\max}},
\end{equation}
therefore, after $\log_\beta(2L_{\max}\alpha_0)$ iterations\footnote{Simply plugging in $\alpha_k = \alpha_0 k^{-\beta}$ and solving for $k$.} the algorithm coincides with SGD with stepsize $\alpha_k$.

\end{itemize}

For completeness, we provide in the next paragraph an overview of the results in~\citet{asi2019stochastic}.

\paragraph{Precise theoretical guarantees in~\citet{asi2019stochastic}.} The stepsize in Equation~\eqref{eq:duchi_step} yields linear convergence guarantees under a specific set of assumptions. We summarize the two main results of \citet{asi2019stochastic} below in the case of differentiable losses\footnote{The results of~\citet{asi2019stochastic} also work in the subdifferentiable setting.}:

\begin{restatable}[Proposition 2 in~\citet{asi2019stochastic}]{prop}{duchi1}
\label{lemma:duchi1}
Let each $f_i$ be a convex and differentiable function which satisfies a specific set of technical assumptions~(see conditions C.i and C.iii in~\citep{asi2019stochastic}). For a fixed batch-size $B$ assume $\inf_{x\in\mathbb{R}^d} f_\SC(x) = f_\SC(x^*)=0$ for all subsets $\SC$ of $[n]$ with $|\SC|=B$~(i.e. interpolation). Assume in addition that there exist constants $\lambda_0,\lambda_1>0$ such that for
all $\alpha>0$, $x\in\mathbb{R}^d$ and $x^*\in\mathcal{X}^*$~(set of solutions) we have~(sharp growth with shared minimizers assumption)
\begin{equation}
    \mathbb{E}_{\SC}\left[\min\left\{\alpha[f_{\SC}(x)-f_{\SC}^*], \frac{(f_{\SC}(x)-f_{\SC}^*)^2}{\|\nabla f_{\SC}(x)\|^2}\right\}\right]\ge\min\{\gamma_0\alpha,\lambda_1 \|x-x^*\|\}\cdot\|x-x^*\|.
    \label{ass_duchi}
\end{equation}
Then, for $\alpha_k = \alpha_0 k^{-\beta}$ with $\beta \in (-\infty,1)$ the stepsize of Equation~\eqref{eq:duchi_step} yields a linear convergence rate dependent on $ \lambda_1$ and the choice of $\beta$.
\end{restatable}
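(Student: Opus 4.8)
The plan is to control the squared distance $r_k^2 := \|x^k - x^*\|^2$ and to show it contracts geometrically once the iterate is close enough to $\mathcal{X}^*$, while arguing that for $\beta<1$ the iterate is always eventually driven into this contraction regime. \textbf{Step 1 (one-step descent).} Writing \eqref{eq:duchi_step} as $x^{k+1} = x^k - \eta_k \nabla f_{\SC_k}(x^k)$ with $\eta_k = \min\{\alpha_k,\, f_{\SC_k}(x^k)/\|\nabla f_{\SC_k}(x^k)\|^2\}$ and using $f_{\SC_k}^*=0$ under interpolation, I would expand $\|x^{k+1}-x^*\|^2$, lower-bound $\langle \nabla f_{\SC_k}(x^k), x^k-x^*\rangle \ge f_{\SC_k}(x^k)-f_{\SC_k}(x^*) = f_{\SC_k}(x^k)$ by convexity and interpolation, and split on which argument the $\min$ selects. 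In each case the defining inequality of that case absorbs the quadratic term $\eta_k^2\|\nabla f_{\SC_k}(x^k)\|^2$, yielding the unified bound
\[
\|x^{k+1}-x^*\|^2 \le \|x^k-x^*\|^2 - \min\left\{\alpha_k\big[f_{\SC_k}(x^k)-f_{\SC_k}^*\big],\ \frac{(f_{\SC_k}(x^k)-f_{\SC_k}^*)^2}{\|\nabla f_{\SC_k}(x^k)\|^2}\right\}.
\]

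\textbf{Step 2 (expectation and sharp growth).} Conditioning on $x^k$ and taking $\Exp_{\SC}$, the subtracted quantity is \emph{exactly} the left-hand side of the sharp-growth assumption \eqref{ass_duchi} evaluated at $(\alpha_k, x^k)$. Substituting the assumed lower bound gives the scalar recursion
\[
\Exp\!\left[\,r_{k+1}^2 \mid x^k\right] \le r_k^2 - \min\{\gamma_0\alpha_k,\ \lambda_1 r_k\}\cdot r_k .
\]
\textbf{Step 3 (regime split).} The $\min$ produces two behaviours. When $\lambda_1 r_k \le \gamma_0\alpha_k$ (the ``near'' regime) the bound collapses to $\Exp[r_{k+1}^2\mid x^k] \le (1-\lambda_1)r_k^2$, i.e. geometric contraction with factor $1-\lambda_1$. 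When $\lambda_1 r_k > \gamma_0\alpha_k$ (the ``far'' regime) it becomes $\Exp[r_{k+1}^2\mid x^k]\le r_k^2 - \gamma_0\alpha_k r_k$, a slower additive decrease of order $\alpha_k r_k$.

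\textbf{Step 4 (forced entry and linear rate).} The decisive point is that $\beta<1$ makes $\sum_k\alpha_k=\alpha_0\sum_k k^{-\beta}=\infty$. In the far regime each step removes roughly $\tfrac12\gamma_0\alpha_k$ from the distance; non-summability of $\alpha_k$ then prevents the iterate from remaining far indefinitely, so it must cross the threshold $r_k\le \gamma_0\alpha_k/\lambda_1$ at some (random) time, after which Step 3 delivers geometric decay with base $1-\lambda_1$. Unrolling the contraction from the crossing time and bounding the transient phase — whose length is governed by $\beta$ through the partial sums $\sum_{j\le k}\alpha_j\sim k^{1-\beta}$ — produces the claimed linear rate depending on $\lambda_1$ and $\beta$.

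\textbf{Main obstacle.} The genuine difficulty is the far-regime recursion $\Exp[r_{k+1}^2\mid x^k]\le r_k^2-\gamma_0\alpha_k r_k$, which mixes the second moment with a first-order term in $r_k$; passing to a clean statement about $\Exp[r_k^2]$ is not immediate, since Jensen only gives $\Exp[r_k]\le\sqrt{\Exp[r_k^2]}$, the wrong direction for lower-bounding progress. The clean route is a pathwise (stopping-time) analysis: treat the first crossing time as a stopping time, argue its finiteness from non-summability of $\alpha_k$, and apply the geometric contraction afterward. Verifying the regularity conditions C.i/C.iii — in particular that they force $\lambda_1\in(0,1)$ so that $1-\lambda_1$ is a valid contraction factor, and that they justify the convexity bound in Step 1 — completes the technical bookkeeping; the descent inequality and the regime split are otherwise routine.
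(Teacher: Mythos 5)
A preliminary remark: the paper does not prove this statement. It is Proposition~2 of \citet{asi2019stochastic}, restated in the related-work appendix purely for comparison with DecSPS, so there is no in-paper proof to measure you against; I can only assess your argument on its own terms. Your Steps 1--3 are correct and are the essential computation: the case split on which argument of the $\min$ is active does yield the pathwise inequality $\|x^{k+1}-x^*\|^2 \le \|x^k-x^*\|^2 - \min\bigl\{\alpha_k f_{\SC_k}(x^k),\, f_{\SC_k}(x^k)^2/\|\nabla f_{\SC_k}(x^k)\|^2\bigr\}$, and conditioning turns the subtracted term into exactly the left-hand side of \eqref{ass_duchi}.

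The genuine gap is Step 4, and you have correctly diagnosed it but not closed it. The far-regime recursion $\Exp[r_{k+1}^2\mid x^k]\le r_k^2-\gamma_0\alpha_k r_k$ holds only in conditional expectation, so the claim that each far-regime step ``removes $\tfrac12\gamma_0\alpha_k$ from the distance'' is not a pathwise statement and cannot be summed along a trajectory; moreover, for $\beta>0$ the threshold $\gamma_0\alpha_k/\lambda_1$ shrinks with $k$, so even after a crossing you must rule out re-entry into the far regime, which your sketch does not do. The stopping-time route can be made to work but is unnecessary: the missing observation is that the subtracted $\min$ in Step 1 is nonnegative (by interpolation, $f_{\SC_k}(x^k)\ge f_{\SC_k}^*=0$), so $r_{k+1}\le r_k\le r_0$ \emph{pathwise}. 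This lets you write $\min\{\gamma_0\alpha_k,\lambda_1 r_k\}\,r_k=\min\{\gamma_0\alpha_k r_k,\lambda_1 r_k^2\}\ge\min\{\gamma_0\alpha_k/r_0,\lambda_1\}\,r_k^2$ with a deterministic coefficient, whence
\begin{equation*}
\Exp\bigl[r_K^2\bigr]\;\le\; r_0^2\prod_{k=1}^{K}\Bigl(1-\min\bigl\{\gamma_0\alpha_k/r_0,\;\lambda_1\bigr\}\Bigr),
\end{equation*}
a clean multiplicative bound that produces the advertised rate and its dependence on $\lambda_1$ and $\beta$ in one stroke (geometric with base $1-\lambda_1$ once $\gamma_0\alpha_k/r_0\ge\lambda_1$, and $\exp(-\Omega(K^{1-\beta}))$ when $0<\beta<1$). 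Finally, your worry that C.i/C.iii must be invoked to guarantee $\lambda_1\le 1$ is unfounded: convexity and interpolation give $f_{\SC}(x)^2/\|\nabla f_{\SC}(x)\|^2\le\|x-x^*\|^2$, so letting $\alpha\to\infty$ in \eqref{ass_duchi} forces $\lambda_1\le 1$ directly.
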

Sufficient conditions for Equation~\eqref{ass_duchi} to hold is that there exist $\lambda,p>0$ such that, for all $x\in\mathbb{R}^d$, $\mathbb{P}_{\SC}[f_{\SC}(x)-f_{\SC}(x^*)]\ge \lambda \|x-x^*\|^2\ge p$ and $\mathbb{E}_{\SC}[\|\nabla f_{\SC}(x)\|^2]\le M^2$.

\begin{restatable}[Proposition 3 in~\citet{asi2019stochastic}]{prop}{duchi2}
\label{lemma:duchi2}
Let each $f_i$ be a convex and differentiable function which satisfies a specific set of technical assumptions~(see conditions C.i and C.iii in~\citep{asi2019stochastic}). Under the same interpolation assumptions as Lemma~\ref{lemma:duchi1}, assume that there exist constants $\lambda_0, \lambda_1>0$ such that for
all $\alpha>0$, $x\in\mathbb{R}^d$ and $x^*\in\mathcal{X}^*$ we have~(quadrtic growth with shared minimizers assumption)
\begin{equation}
    \mathbb{E}_{\SC}\left[(f_{\SC}(x)-f_{\SC}^*)\cdot\min\left\{\alpha, \frac{(f_{\SC}(x)-f_{\SC}^*)^2}{\|\nabla f_{\SC}(x)\|^2}\right\}\right]\ge\min\{\alpha\lambda_0,\lambda_1\}\cdot\|x-x^*\|^2.
    \label{ass_duchi2}
\end{equation}
Then, for $\alpha_k = \alpha_0 k^{-\beta}$ with $\beta \in (-\infty,\infty)$ the stepsize of Equation~\eqref{eq:duchi_step} yields a linear convergence rate dependent on $ \lambda_0, \lambda_1$ and the choice of $\beta$.
\end{restatable}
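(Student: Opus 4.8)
The plan is to reduce Proposition~\ref{lemma:duchi2} to a one-step distance contraction and then unroll it, following the classical Polyak-stepsize template. Write $\gamma_k = \min\{\alpha_k,\ \tfrac{f_{\SC_k}(x^k)-f_{\SC_k}^*}{\|\nabla f_{\SC_k}(x^k)\|^2}\}$ so that the update in~\eqref{eq:duchi_step} reads $x^{k+1} = x^k - \gamma_k\nabla f_{\SC_k}(x^k)$. Expanding $\|x^{k+1}-x^*\|^2$, using convexity of $f_{\SC_k}$ together with the interpolation identity $f_{\SC_k}(x^*)=f_{\SC_k}^*=0$ to bound $\langle\nabla f_{\SC_k}(x^k),x^k-x^*\rangle\ge f_{\SC_k}(x^k)-f_{\SC_k}^*$, and exploiting the truncation inequality $\gamma_k\|\nabla f_{\SC_k}(x^k)\|^2\le f_{\SC_k}(x^k)-f_{\SC_k}^*$ (which holds by definition of $\gamma_k$), I would obtain the key per-sample reduction
\[
\|x^{k+1}-x^*\|^2 \le \|x^k-x^*\|^2 - (f_{\SC_k}(x^k)-f_{\SC_k}^*)\min\Big\{\alpha_k,\ \tfrac{f_{\SC_k}(x^k)-f_{\SC_k}^*}{\|\nabla f_{\SC_k}(x^k)\|^2}\Big\}.
\]
The point of carrying the two-case argument on the $\min$ is that the right-hand reduction is, up to evaluation at the random point $x^k$, exactly the functional whose conditional expectation is controlled by the growth condition~\eqref{ass_duchi2}.

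Next I would condition on the past $\mathcal{F}_k$ and take expectation over the minibatch $\SC_k$. Since $\alpha_k$ is deterministic and $x^k$ is $\mathcal{F}_k$-measurable, I can invoke the quadratic-growth-with-shared-minimizers assumption~\eqref{ass_duchi2} at $\alpha=\alpha_k$ and $x=x^k$, which converts the expected reduction into a genuine multiplicative contraction
\[
\Exp\big[\|x^{k+1}-x^*\|^2 \mid \mathcal{F}_k\big] \le \big(1-\min\{\alpha_k\lambda_0,\ \lambda_1\}\big)\|x^k-x^*\|^2.
\]
Taking total expectation and unrolling through the tower property yields $\Exp\|x^K-x^*\|^2 \le \|x^0-x^*\|^2\prod_{k=0}^{K-1}(1-\min\{\alpha_k\lambda_0,\lambda_1\})$, and applying $1-t\le e^{-t}$ gives $\Exp\|x^K-x^*\|^2 \le \|x^0-x^*\|^2\exp\!\big(-\sum_{k=0}^{K-1}\min\{\alpha_k\lambda_0,\lambda_1\}\big)$.

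Finally I would specialize to $\alpha_k=\alpha_0 k^{-\beta}$ and read off the rate from the exponent $S_K=\sum_{k}\min\{\alpha_0\lambda_0 k^{-\beta},\lambda_1\}$, case-splitting on $\beta$. For $\beta\le 0$ the summand is eventually the constant $\lambda_1$, so $S_K$ grows linearly in $K$ and the rate is genuinely geometric with contraction factor governed by $\min\{\alpha_0\lambda_0,\lambda_1\}$; for $\beta\in(0,1)$ the clipped term dominates and $S_K\asymp K^{1-\beta}$, yielding the correspondingly slower (stretched-exponential) rate; the boundary and larger-$\beta$ behavior is then read directly off the growth/decay of the $p$-series $\sum k^{-\beta}$. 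This makes the dependence of the rate on $\beta$, $\lambda_0$, and $\lambda_1$ explicit, as claimed.

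The main obstacle is not the unrolling but the passage from the per-sample reduction to the contraction: condition~\eqref{ass_duchi2} is a hypothesis on the expectation of a nonlinear (min-of-two-terms) functional of the random batch, so it must be applied with care to keep the truncation threshold $\alpha_k$ and the point $x^k$ fixed under the conditional expectation, and to ensure that the one-step reduction matches the functional on the left of~\eqref{ass_duchi2} exactly rather than merely up to an inequality; this is precisely where interpolation and the shared-minimizer structure are used. Everything downstream — the tower-property unrolling and the $p$-series case analysis — is routine.
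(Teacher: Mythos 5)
First, a point of reference: the paper does not prove this statement. Proposition~\ref{lemma:duchi2} is a verbatim restatement of Proposition~3 of \citet{asi2019stochastic}, included in the appendix only to situate DecSPS relative to prior work, so there is no internal proof to compare against. That said, your skeleton --- one-step expansion, convexity plus interpolation to get $\langle\nabla f_{\SC_k}(x^k),x^k-x^*\rangle\ge f_{\SC_k}(x^k)-f_{\SC_k}^*$, the truncation inequality $\gamma_k\|\nabla f_{\SC_k}(x^k)\|^2\le f_{\SC_k}(x^k)-f_{\SC_k}^*$, conditional expectation, and unrolling --- is the standard route for results of this type and is the right one.

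Two of your steps do not go through as written, however. First, your one-step reduction equals $\gamma_k\bigl(f_{\SC_k}(x^k)-f_{\SC_k}^*\bigr)=\min\bigl\{\alpha_k(f_{\SC_k}(x^k)-f_{\SC_k}^*),\ (f_{\SC_k}(x^k)-f_{\SC_k}^*)^2/\|\nabla f_{\SC_k}(x^k)\|^2\bigr\}$, which is exactly the functional controlled in~\eqref{ass_duchi} of Proposition~\ref{lemma:duchi1}. The functional in~\eqref{ass_duchi2} as printed is $(f_{\SC}(x)-f_{\SC}^*)\cdot\min\bigl\{\alpha,\ (f_{\SC}(x)-f_{\SC}^*)^2/\|\nabla f_{\SC}(x)\|^2\bigr\}$, which distributes to a \emph{cubic} second branch $(f_{\SC}(x)-f_{\SC}^*)^3/\|\nabla f_{\SC}(x)\|^2$; your claim that the reduction matches this "exactly" is therefore false for the statement as given. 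It is true only if the inner term is read as $(f_{\SC}(x)-f_{\SC}^*)/\|\nabla f_{\SC}(x)\|^2$ --- almost certainly the intended restatement, given the outer factor --- and you should have flagged the discrepancy rather than asserted the identification. Second, your own unrolling contradicts the stated range of $\beta$: with $\Exp\|x^K-x^*\|^2\le\|x^0-x^*\|^2\exp\bigl(-\sum_{k<K}\min\{\alpha_k\lambda_0,\lambda_1\}\bigr)$, the exponent stays bounded when $\beta>1$ (the $p$-series converges), so no convergence of any kind follows, and for $\beta\in(0,1)$ you obtain a stretched-exponential rather than a geometric rate. Your closing remark that the "larger-$\beta$ behavior is read directly off the $p$-series" quietly elides this: the argument yields a genuinely linear rate only for $\beta\le 0$, so either the phrase "linear convergence rate \ldots for $\beta\in(-\infty,\infty)$" must be interpreted as the product-form bound $\prod_{k}(1-\min\{\alpha_k\lambda_0,\lambda_1\})$ itself, or the conclusion as literally claimed is not delivered by your (otherwise correct) computation for $\beta>0$.
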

The authors show that Equation~\eqref{ass_duchi2} holds under the assumption that the averaged loss $f$ has quadratic growth and has Lipschitz continuous gradients, if in addition there exist constants $0 < c, C < \infty $, $p > 0$ such that $\mathbb{P}_{\SC}\left[ \|\nabla f_{\SC}(x)\|^2\le C\|\nabla f(x)\|^2,\ [f_{\SC}(x)-f_{\SC}(x^*)] > c(f(x)-f^*(x))\right]\ge p$.

\subsection{Comparisons with other versions of the Polyak stepsize for stochastic problems} 
To the best of our knowledge, no prior work has provided a computationally feasible modification of the Polyak stepsize for convergence to the exact solution in stochastic non-interpolated problems. In the next lines, we outline the details for a few related works on Polyak stepsize for stochastic problems.
\begin{itemize}[leftmargin = *]
    \item \textit{SPS$_{\max}$}: As discussed in the main paper, our starting point is the SPS$_{\max}$ algorithm in~\cite{loizou2021stochastic}, which provides linear~(for strongly convex) or sublinear~(for convex) convergence to a ball around the minimizer, with size dependent of the problems' degree of interpolation. Instead, in this work, we provide convergence guarantees to the exact solution in the non-interpolated setting for a modified version of this algorithm. In addition, when compared to SPS$_{\max}$, our method does not require knowledge of the single $f_i^*$s, but just of lower bounds on these quantities~(see \S\ref{sec:estimation}).
    \item \textit{L4}: A stepsize very similar to SPS$_{\max}$~(the \textit{L4 algorithm}) was proposed back in 2018 by~\cite{rolinek2018l4}. While this stepsize results in promising performance in deep learning, (1) it has no theoretical convergence guarantees, and (2) each update requires an online estimation of the $f_i^*$, which in turn requires tuning up to three hyper-parameters.
    \item \textit{SPLR}: \citet{oberman2019stochastic} instead study convergence of SGD with the following stepsize: $\gamma_k=\frac{2[f(x^k)-f^*]}{\Exp_{i_k}\|\nabla f_{i_k}(x^k)\|^2}$, which requires  knowledge of $\Exp_{i_k}\|\nabla f_{i_k}(x^k)\|^2$ for all iterates $x^k$ and the evaluation of $f(x^k)$ --- the full-batch loss --- at each step. This makes the concrete application of SPLR problematic for sizeable stochastic problems.
    \item \textit{ALI-G}: Last, the ALI-G stepsize proposed by~\citet{berrada2019training} is $\gamma_k=\min \left\{\frac{f_i(x^k)}{\|\nabla f_i(x^k)\|^2+\delta}, \eta \right\}$, where $\delta>0$ is a tuning parameter. Unlike the SPS$_{\max}$ setting, their theoretical analysis relies on an $\epsilon$-interpolation condition. Moreover, the values of the parameter $\delta$ and $\eta$ that guarantee convergence heavily depend on the smoothness parameter of the objective $f$, limiting the method's practical applicability. In addition, in the interpolated setting, while ALI-G converges to a neighborhood of the solution, the SPS$_{\max}$ method~\cite{loizou2021stochastic} is able to provide linear convergence to the solution.
\end{itemize}

\section[]{Technical Preliminaries}
\label{sec:app_pre}

Let us present some basic definitions used throughout the paper.

\begin{definition}[Strong Convexity / Convexity]
The function $f : \R^n \rightarrow \R$, is $\mu$-strongly convex, if there exists a constant
$\mu > 0$ such that $\forall x, y \in \R^n$:
\begin{equation}
\label{stronglyconvex}
 f(x) \geq f(y)+ \langle\nabla f(y) , x-y\rangle + \frac{\mu}{2} \|x-y\|^2
\end{equation}
for all $x \in \R^d$. If inequality \eqref{stronglyconvex} holds with $\mu=0$ the function $f$ is convex.
\end{definition}

\begin{definition}[$L$-smooth]
The function $f : \R^n \rightarrow \R$, $L$-smooth, if there exists a constant
$L > 0$ such that $\forall x, y \in \R^n$:
\begin{equation}
\label{Smooth}
\|\nabla f(x)-\nabla f(y)\| \leq L \|x-y\|,
\end{equation}
or equivalently:
\begin{equation}
\label{Smooth2}
 f(x) \leq f(y)+ \langle\nabla f(y) , x-y\rangle + \frac{L}{2} \|x-y\|^2.
\end{equation}
\end{definition}

\begin{restatable}[]{lem}{basic_L_sc}
\label{lemma:basic_L_sc}
If a function $g$ is $\mu$-strongly convex and $L$-smooth the following bounds holds:
\begin{equation}
    \frac{1}{2 L} \|\nabla g(x)\|^2 \leq g(x)-\inf_x g(x)\leq \frac{1}{2\mu} \|\nabla g(x)\|^2.
\end{equation}
\end{restatable}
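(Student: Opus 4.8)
The plan is to exploit directly the two defining inequalities—the smoothness bound \eqref{Smooth2} and the strong convexity bound \eqref{stronglyconvex}—by comparing $g$ at an arbitrary point $x$ with its global minimizer. Since $g$ is $\mu$-strongly convex it admits a unique minimizer $x^*$ satisfying $\nabla g(x^*) = 0$ and $\inf_x g(x) = g(x^*)$, so both estimates reduce to sandwiching $g(x) - g(x^*)$ between multiples of $\|\nabla g(x)\|^2$.

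For the lower bound $\tfrac{1}{2L}\|\nabla g(x)\|^2 \le g(x) - g(x^*)$, I would apply \eqref{Smooth2} at the one-step gradient descent point $y = x - \tfrac{1}{L}\nabla g(x)$. Substituting and completing the square yields $g(y) \le g(x) - \tfrac{1}{2L}\|\nabla g(x)\|^2$, and since optimality of $x^*$ gives $g(x^*) \le g(y)$, rearranging produces the claim. This is the standard sufficient-decrease computation.

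For the upper bound $g(x) - g(x^*) \le \tfrac{1}{2\mu}\|\nabla g(x)\|^2$, I would start from \eqref{stronglyconvex}, reading its right-hand side as a quadratic in $y$. That quadratic is minimized at $y = x - \tfrac{1}{\mu}\nabla g(x)$, and plugging this minimizer back in gives the pointwise lower bound $g(y) \ge g(x) - \tfrac{1}{2\mu}\|\nabla g(x)\|^2$ valid for every $y$. Evaluating at $y = x^*$ then yields $g(x^*) \ge g(x) - \tfrac{1}{2\mu}\|\nabla g(x)\|^2$, which rearranges to the desired inequality.

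I do not expect a genuine obstacle: each half follows from one defining inequality together with a single completion of the square. The only point deserving mild care is the minimization step in the upper bound; I would handle it by explicitly substituting the minimizer of the quadratic, rather than appealing to monotonicity of the infimum, so that the argument stays fully self-contained.
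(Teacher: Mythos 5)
Your proof is correct and is the standard argument: the sufficient-decrease step at $y = x - \tfrac{1}{L}\nabla g(x)$ for the lower bound, and minimizing the strong-convexity quadratic over $y$ (the Polyak--\L{}ojasiewicz derivation) for the upper bound. The paper itself states Lemma~\ref{lemma:basic_L_sc} without proof, treating it as a known fact, so there is nothing to diverge from; your write-up simply supplies the textbook derivation, and both halves check out against the paper's definitions \eqref{stronglyconvex} and \eqref{Smooth2}.
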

The following lemma is the fundamental starting point in~\citep{loizou2021stochastic}.
\begin{restatable}[]{lem}{loizou_lem}
\label{lemma:basic_loizou}

Let $f(x) = \frac{1}{n} \sum_{i=1}^n f_i(x)$ where the functions $f_i$ are $\mu_i$-strongly convex and $L_i$-smooth, then
\vspace{-1ex}
\begin{align}
\label{NewBounds}
\frac{1}{2 L_{\max}} \leq \frac{1}{2 L_{i}} \leq\frac{f_{i}(x^k)-f_i^*}{ \|\nabla f_{i}(x^k)\|^2} \leq \frac{1}{2 \mu_{i}}\leq \frac{1}{2 \mu_{\min}},
\end{align}
\vspace{-1ex}
where $f_i^*:=\inf_x f_i(x)$, $L_{\max}=\max \{L_i\}_{i=1}^n$ and $\mu_{\min}=\min \{\mu_i\}_{i=1}^n$.
\end{restatable}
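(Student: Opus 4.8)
The plan is to derive this as an immediate consequence of Lemma~\ref{lemma:basic_L_sc}, applied separately to each summand $f_i$. Although $f$ is defined as the average $\frac{1}{n}\sum_i f_i$, the claimed chain of inequalities concerns only the individual components $f_i$, so the averaging structure is irrelevant and I can work one index at a time.

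First I would fix an index $i\in[n]$ and apply Lemma~\ref{lemma:basic_L_sc} with $g=f_i$, which is $\mu_i$-strongly convex and $L_i$-smooth by hypothesis. Evaluating at the point $x=x^k$ and writing $f_i^*=\inf_x f_i(x)$, this yields
\begin{equation}
\frac{1}{2L_i}\|\nabla f_i(x^k)\|^2 \le f_i(x^k)-f_i^* \le \frac{1}{2\mu_i}\|\nabla f_i(x^k)\|^2.
\end{equation}
Assuming $\|\nabla f_i(x^k)\|^2>0$ (the degenerate case $\nabla f_i(x^k)=0$ is excluded, since there $x^k$ already minimizes the convex function $f_i$ and the ratio is undefined --- this is exactly the situation addressed in the remark on $\|\nabla f_{\SC_k}\|=0$), I divide each side of the display by $\|\nabla f_i(x^k)\|^2$ to obtain the two central inequalities
\begin{equation}
\frac{1}{2L_i} \le \frac{f_i(x^k)-f_i^*}{\|\nabla f_i(x^k)\|^2} \le \frac{1}{2\mu_i}.
\end{equation}

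Finally I would close the five-term chain by inserting the outer bounds, which are purely definitional and follow from monotonicity of $t\mapsto 1/(2t)$: since $L_{\max}=\max_j L_j\ge L_i$ we have $\frac{1}{2L_{\max}}\le\frac{1}{2L_i}$, and since $\mu_{\min}=\min_j\mu_j\le\mu_i$ we have $\frac{1}{2\mu_i}\le\frac{1}{2\mu_{\min}}$. Concatenating these with the central inequalities gives precisely the statement. I do not expect any genuine obstacle: the argument is a one-line specialization of Lemma~\ref{lemma:basic_L_sc} followed by a trivial monotonicity observation, and the only point deserving a word of care is the non-vanishing of the gradient norm, which is why the ratio is only considered when $\|\nabla f_i(x^k)\|^2\neq 0$.
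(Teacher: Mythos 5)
Your proof is correct and follows exactly the paper's route: the paper's own proof is the single line ``Directly using Lemma~\ref{lemma:basic_L_sc}'', and you simply spell out that specialization to each $f_i$, the division by the (nonzero) gradient norm, and the trivial monotonicity step for the outer bounds. Nothing further is needed.
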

\begin{proof}
Directly using Lemma~\ref{lemma:basic_L_sc}.
\end{proof}

\section[]{Convergence guarantees after replacing $\boldsymbol{f_S}^*$ in SPS$_{\text{max}}$ with $\boldsymbol{\ell_S}^*$} 
\label{sec:app_proof1}

The proofs in this subsection is an easy adaptation of the proofs appeared in~\citep{loizou2021stochastic}. To avoid redundancy in the literature, we provide skecth of the proofs showing the fundamental differences and invite the interested reader to read the details in ~\citep{loizou2021stochastic}.

\loizoubs*
\begin{proof}

\textcolor{blue}{We highlight in blue text the differences between this proof and the one in~\citep{loizou2021stochastic}.}

Recall the stepsize definition
\begin{equation}
\tag{SPS$_{\max}^\ell$}
\gamma_k = \min \left\{ \frac{f_{\SC_k}(x^k)-\ell_{\SC_k}^*}{c\|\nabla f_{\SC_k}(x^k)\|^2}, \gamma_b \right\},
\end{equation}
where $\ell_{\SC_k}^*$ is any lower bound on $f_{\SC_k}^*$. We also will make use of the bound
\begin{align}
    \frac{1}{2c L_{\SC}}\le \frac{f_{\SC_k}(x^k)-f_{\SC_k}^*}{c\|\nabla f_{\SC_k}(x^k)\|^2} \le\frac{f_{\SC_k}(x^k)-\ell_{\SC_k}^*}{c\|\nabla f_{\SC_k}(x^k)\|^2} = \gamma_k\le\frac{\gamma_b}{c}.
\end{align}

\paragraph{Convex setting.} As in \citep{loizou2021stochastic} we use a standard expansion as well as the stepsize definition.
\begin{align}
&\|x^{k+1}-x^*\|^2\\
&=\|x^k-x^*\|^2-2 \gamma_k \langle x^k-x^*, \nabla f_{\SC_k}(x^k) \rangle + \gamma_k^2 \| \nabla f_{\SC_k}(x^k)\|^2\\
&\leq\|x^k-x^*\|^2-2 \gamma_k \left[f_{\SC_k}(x^k)-f_{\SC_k}(x^*)\right] + \gamma_k^2 \| \nabla f_{\SC_k}(x^k)\|^2\\
&\leq\|x^k-x^*\|^2-2 \gamma_k \left[f_{\SC_k}(x^k)-f_{\SC_k}(x^*)\right] + \textcolor{blue}{\frac{\gamma_k }{c}[f_{\SC_k}(x^k)-\ell_{\SC_k}^*]}\\
&=\|x^k-x^*\|^2-2 \gamma_k \left[f_{\SC_k}(x^k)-f_{\SC_k}^*+f_{\SC_k}^*-f_{\SC_k}(x^*)\right] + \textcolor{blue}{\frac{\gamma_k }{c}[f_{\SC_k}(x^k)-\ell_{\SC_k}^*]}\\
\end{align}
Next, adding and subtracting $f_{\SC_k}^*$ gives
\begin{align}
&\|x^{k+1}-x^*\|^2\\
&\le\|x^k-x^*\|^2-2 \gamma_k \left[f_{\SC_k}(x^k)-f_{\SC_k}^*+f_{\SC_k}^*-f_{\SC_k}(x^*)\right] + \textcolor{blue}{\frac{\gamma_k }{c}[f_{\SC_k}(x^k)-f_{\SC_k}^* + f_{\SC_k}^*-\ell_{\SC_k}^*]}\\
&=\|x^k-x^*\|^2- \gamma_k\left(2-\frac{1}{c}\right) \left[f_{\SC_k}(x^k)-f_{\SC_k}^*\right] + 2 \gamma_k \left[f_{\SC_k}(x^*)-f_{\SC_k}^*\right] +  \textcolor{blue}{\frac {\gamma_k}{c} \underbrace{[f_{\SC_k}^*-\ell_{\SC_k}^*]}_{\ge0}}\\
&\le\|x^k-x^*\|^2- \gamma_k\left(2-\frac{1}{c}\right) \underbrace{\left[f_{\SC_k}(x^k)-f_{\SC_k}^*\right]}_{>0} + 2 \gamma_k\ f_{\SC_k}(x^*)\cancel{- 2\gamma_k \ f_{\SC_k}^*+ \textcolor{blue}{2\gamma_k f_{\SC_k}^*}}- \textcolor{blue}{2\gamma_k \ell_{\SC_k}^*}.
\end{align}
\vspace{-2mm}
Since $c> \frac{1}{2}$ it holds that $\left(2-\frac{1}{c}\right) >0$. We obtain:
\begin{align}
&\|x^{k+1}-x^*\|^2\\ &\leq\|x^k-x^*\|^2- \gamma_k\underbrace{\left(2-\frac{1}{c}\right)\left[f_{\SC_k}(x^k)-f_{\SC_k}^*\right]}_{\ge 0}+ 2 \gamma_k \  \textcolor{blue}{\underbrace{[f_{\SC_k}(x^*)-\ell_{\SC_k}^*]}_{\ge 0}}\label{eq:initial_point_cvx} \\
&\leq\|x^k-x^*\|^2- \alpha \left(2-\frac{1}{c}\right) \left[f_{\SC_k}(x^k)-f_{\SC_k}^*\right] + 2 \gamma_{\bound} \ \textcolor{blue}{[f_{\SC_k}(x^*)-\ell_{\SC_k}^*]}\\
&=\|x^k-x^*\|^2- \alpha \left(2-\frac{1}{c}\right) \left[f_{\SC_k}(x^k)-f_{\SC_k}(x^*)+f_{\SC_k}(x^*)-f_{\SC_k}^*\right]  + 2 \gamma_{\bound}\ \textcolor{blue}{[f_{\SC_k}(x^*)-\ell_{\SC_k}^*]}\\
&=\|x^k-x^*\|^2- \alpha \left(2-\frac{1}{c}\right) \left[f_{\SC_k}(x^k)-f_{\SC_k}(x^*)\right]- \alpha \left(2-\frac{1}{c}\right) \left[f_{\SC_k}(x^*)-f_{\SC_k}^*\right]\\ &\quad  + 2 \gamma_{\bound}\ \textcolor{blue}{[f_{\SC_k}(x^*)-\ell_{\SC_k}^*]}\\
&\leq\|x^k-x^*\|^2- \alpha \left(2-\frac{1}{c}\right) \left[f_{\SC_k}(x^k)-f_{\SC_k}(x^*)\right]+ 2 \gamma_{\bound}\ \textcolor{blue}{[f_{\SC_k}(x^*)-\ell_{\SC_k}^*]}
\end{align}
where in the last inequality we use that $\alpha \left(2-\frac{1}{c}\right) \left[f_{\SC_k}(x^*)-f_{\SC_k}^*\right]>0$.
\textcolor{blue}{ Note that this factor $f_{\SC_k}(x^*)-f_{\SC_k}^*$ pops up in the proof, not in the stepsize!} By rearranging:
\begin{align}
\alpha \left(2-\frac{1}{c}\right) \left[f_{\SC_k}(x^k)-f_{\SC_k}(x^*)\right]
&\leq\|x^k-x^*\|^2-\|x^{k+1}-x^*\|^2 +  \textcolor{blue}{2\gamma_b \left[f_{\SC_k}(x^*)-\ell_{\SC_k}^*\right]}
\end{align}
\textcolor{blue}{The rest of the proof is identical to~\citep{loizou2021stochastic}~(Theorem 3.4). Just, at the instead of $\sigma^2_B$ we have $\hat\sigma^2_B: =\Exp[f_{\SC_k}(x^*)-\ell_{\SC_k}^*]$}. That is, after taking the expectation on both sides (conditioning on $x_k$), we can use the tower property and sum over $k$~(from $0$ to $K-1$) on both sides of the inequality. After dividing by $K$, thanks to Jensen's inequality, we get~(for $c=1$):
\begin{align*}
\Exp \left[f(\bar{x}^K)-f(x^*)\right] 
\leq \frac{\|x^0-x^*\|^2}{\alpha \, K} + \frac{2\gamma_{b}\hat\sigma^2_B}{\alpha},
\end{align*}
where $\bar{x}^K=\frac{1}{K}\sum_{k=0}^{K-1} x^k$, $\alpha:=\min \{\frac{1}{2cL_{\max}},\gamma_{b}\}$ and $L_{\max}=\max \{L_i\}_{i=1}^n$ is the maximum smoothness constant.
\vspace{-4mm}
\paragraph{Strongly Convex setting.} We proceed in the usual way:
\begin{align}
\|x^{k+1}-x^*\|^2 &=\|x^k-x^*\|^2-2 \gamma_k \langle x^k-x^*, \nabla f_{\SC_k}(x^k) \rangle + \gamma_k^2 \| \nabla f_{\SC_k}(x^k)\|^2\\
&\leq\|x^k-x^*\|^2-2 \gamma_k \langle x^k-x^*, \nabla f_{\SC_k}(x^k) \rangle +  \textcolor{blue}{\frac{\gamma_k }{c} \underbrace{\left[f_{\SC_k}(x^k)-\ell_{\SC_k}^*\right]}_{\ge 0}}. \\
\end{align}

Using the fact that $c\geq1/2$, we get
\begin{align}
&\leq \|x^k-x^*\|^2-2 \gamma_k \langle x^k-x^*, \nabla f_{\SC_k}(x^k) \rangle +  \textcolor{blue}{2 \gamma_k \left[f_{\SC_k}(x^k)-\ell_{\SC_k}^*\right]} \\
&= \|x^k-x^*\|^2-2 \gamma_k \langle x^k-x^*, \nabla f_{\SC_k}(x^k) \rangle +  \textcolor{blue}{2 \gamma_k \left[f_{\SC_k}(x^k)-f_{\SC_k}(x^*)+f_{\SC_k}(x^*)-f_{\SC_k}^*\right]} \\
&= \|x^k-x^*\|^2+2 \gamma_k \left[ -\langle x^k-x^*, \nabla f_{\SC_k}(x^k) \rangle + f_{\SC_k}(x^k)-f_{\SC_k}(x^*) \right]+ \textcolor{blue}{2 \gamma_k \left[f_{\SC_k}(x^*)-\ell_{\SC_k}^*\right]}.
\end{align}

From convexity of functions $f_{\SC_k}$ it holds that $-\langle x^k-x^*, \nabla f_{\SC_k}(x^k) \rangle + f_{\SC_k}(x^k)-f_{\SC_k}(x^*)\leq0$, $\forall \SC_k \subseteq [n]$. Thus, 

\begin{align}
\|x^{k+1}-x^*\|^2 &\le \|x^{k}-x^*\|^2 +2 \gamma_k \underbrace{\left[ -\langle x^k-x^*, \nabla f_{\SC_k}(x^k) \rangle + f_{\SC_k}(x^k)-f_{\SC_k}(x^*) \right]}_{\leq 0}\\ &\quad+ \textcolor{blue}{2 \gamma_b \underbrace{\left[f_{\SC_k}(x^*)-\ell_{\SC_k}^*\right]}_{\geq 0}}
\end{align}
\textcolor{blue}{The rest of the proof is identical to~\citep{loizou2021stochastic}~(Theorem 3.1). Just, at the instead of $\sigma^2_B$ we have $\hat\sigma^2_B:\Exp[f_{\SC_k}(x^*)-\ell_{\SC_k}^*]$}. That is, after taking the expectation on both sides~(conditioning on $x_k$), we can use the tower property and solve the resulting geometric series in closed form: for $c\geq1/2$ we get

\begin{align*}
\Exp \|x^{k}-x^*\|^2 
\leq \left(1-\mu \alpha \right)^k  \|x^0-x^*\|^2 + \frac{2\gamma_{b} \hat\sigma^2_B }{\mu \alpha},
\end{align*}
where $\alpha:=\min \{\frac{1}{2cL_{\max}},\gamma_{b}\}$ and $L_{\max}=\max \{L_i\}_{i=1}^n$ is the maximum smoothness constant.
\end{proof}

\section[]{Lack of convergence of SGD with SPS$_{\max}$ in the non-interpolated setting}
\label{sec:app_proof2}
\vspace{-3mm}
\subsection[]{Convergence of SPS with decreasing stepsizes to $\boldsymbol{\tilde x\ne x^*}$ in the quadratic case}

We recall the variation of constants formula
\begin{restatable}[Variation of constants]{lem}{var}
\label{lemma:var_constants_proof}
Let $z\in\mathbb{R}^d$ evolve with time-varying linear dynamics $z_{k+1} = A_k z_k + \varepsilon_k$, where $A_k\in\mathbb{R}^{d\times d}$ and $\varepsilon_k\in \mathbb{R}^{d}$ for all $k$. Then, with the convention that $\prod_{j=k+1}^{k} A_j = I$, 
\begin{equation}
    z_{k} = \left(\prod_{j=0}^{k-1}A_j\right)z_0 +\sum_{i=0}^{k-1}\left(\prod_{j=i+1}^{k-1} A_j\right)\varepsilon_i.
\end{equation}
\end{restatable}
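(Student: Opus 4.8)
The plan is to prove the identity by induction on $k$, the only genuine care being the matrix ordering implied by the recursion together with the stated empty-product convention $\prod_{j=k+1}^{k} A_j = I$. First I would pin down the ordering by unrolling the first few steps: from $z_{k+1} = A_k z_k + \varepsilon_k$ one gets $z_1 = A_0 z_0 + \varepsilon_0$ and $z_2 = A_1 A_0 z_0 + A_1 \varepsilon_0 + \varepsilon_1$, which forces $\prod_{j=i}^{m} A_j$ to denote the \emph{left-ordered} product $A_m A_{m-1}\cdots A_i$ (highest index on the left). This is exactly the ordering under which premultiplying by a fresh matrix $A_k$ simply extends the product, which is what makes the induction go through.

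For the base case $k=0$, the right-hand side reduces to $\left(\prod_{j=0}^{-1} A_j\right) z_0 = I\, z_0 = z_0$ with an empty sum, matching trivially; one may instead verify $k=1$ by hand as a sanity check, recovering $z_1 = A_0 z_0 + \varepsilon_0$.

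For the inductive step, assuming the formula at index $k$, I would substitute it into the recursion:
\[
z_{k+1} = A_k\left[\left(\prod_{j=0}^{k-1}A_j\right)z_0 + \sum_{i=0}^{k-1}\left(\prod_{j=i+1}^{k-1}A_j\right)\varepsilon_i\right] + \varepsilon_k.
\]
Distributing $A_k$ and using the left-ordering convention gives $A_k \prod_{j=0}^{k-1}A_j = \prod_{j=0}^{k}A_j$ and $A_k \prod_{j=i+1}^{k-1}A_j = \prod_{j=i+1}^{k}A_j$, while the trailing term is rewritten as $\varepsilon_k = \left(\prod_{j=k+1}^{k}A_j\right)\varepsilon_k = I\,\varepsilon_k$ using the empty-product convention. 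Absorbing $\varepsilon_k$ as the $i=k$ summand then yields precisely the claimed expression at index $k+1$, closing the induction.

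The only real obstacle here is bookkeeping rather than mathematics: since the $A_k$ need not commute, I must keep the product ordering consistent throughout — new factors always entering on the left — and I must check that the empty-product convention correctly produces $I$ at both the $i=k-1$ boundary of the hypothesis and the newly created $i=k$ term. Once the ordering is fixed, every manipulation is routine reindexing.
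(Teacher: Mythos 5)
Your proposal is correct and is essentially the paper's own argument: induction on $k$, substituting the inductive hypothesis into the recursion, distributing $A_k$ across the product and sum, and absorbing $\varepsilon_k$ as the new $i=k$ summand via the empty-product convention. Your explicit remark about the left-ordering of the non-commuting matrix products is a sensible clarification the paper leaves implicit, but it does not change the route.
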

\begin{proof}
For $k=1$ we get $z_1 = A_0 z_0 + \varepsilon_0$. The induction step yields
\begin{align}
    z_{k+1} &= A_k\left(\left( \prod_{j=0}^{k-1}A_j\right)z_0 +\sum_{i=0}^{k-1}\left(\prod_{j=i+1}^{k-1} A_j\right)\varepsilon_i\right) + \varepsilon_k.\\
    &= \left( \prod_{j=0}^{k}A_j\right)z_0 +\sum_{i=0}^{k-1}A_k\left(\prod_{j=i+1}^{k-1} A_j\right)\varepsilon_i + \varepsilon_k.\\
    &= \left( \prod_{j=0}^{k}A_j\right)z_0 +\sum_{i=0}^{k-1}\left(\prod_{j=i+1}^{k} A_j\right)\varepsilon_i + \left(\prod_{j=k+1}^{k} A_j\right)\varepsilon_k.\\
    &= \left( \prod_{j=0}^{k}A_j\right)z_0 +\sum_{i=0}^{k}\left(\prod_{j=i+1}^{k} A_j\right)\varepsilon_i.
\end{align}
This completes the proof of the variations of constants formula.
\end{proof}

\begin{restatable}[ Quadratic 1d]{prop}{biasquadratic}
\label{prop:bias}
Consider $f(x):=\frac{1}{n}\sum_{i=1}^n f_i(x),\quad f_i(x) = \frac{a_i}{2}(x-x_i^*)^2$. We consider SGD with $\gamma_k=\frac{f_i(x^k)-f_i^*}{c_k\|\nabla f_i(x^k)\|^2}$, with $c_k = (k+1)/2$. Then $\Exp|x^{k+1}-\tilde x|^2 = \mathcal{O}(1/k)$, with $\tilde x =\frac{1}{n}\sum_{i=1}^n x_i^*\ne \frac{\sum_{i=1}^n a_ix_i^*}{\sum_{i=1}^n a_i} = x^*.$
\end{restatable}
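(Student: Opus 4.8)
The plan is to exploit the special structure of the quadratic losses, under which the Polyak stepsize cancels the per-sample curvature $a_{i_k}$ exactly and reduces SGD to an unweighted running average. First I would evaluate every ingredient of the stepsize. Since $f_i(x)=\frac{a_i}{2}(x-x_i^*)^2$, we have $f_i^*=0$, $\nabla f_i(x)=a_i(x-x_i^*)$, and $\|\nabla f_i(x)\|^2=a_i^2(x-x_i^*)^2$, so that $\gamma_k=\frac{f_{i_k}(x^k)-f_{i_k}^*}{c_k\|\nabla f_{i_k}(x^k)\|^2}=\frac{1}{2c_k a_{i_k}}$. The key observation is that the update direction $\gamma_k\nabla f_{i_k}(x^k)=\frac{1}{2c_k}(x^k-x_{i_k}^*)$ is \emph{curvature-independent}. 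With $c_k=(k+1)/2$, the iteration collapses to the scalar affine recursion
\[
x^{k+1}=\Big(1-\tfrac{1}{k+1}\Big)x^k+\tfrac{1}{k+1}x_{i_k}^*=\tfrac{k}{k+1}\,x^k+\tfrac{1}{k+1}\,x_{i_k}^*.
\]

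Next I would solve this recursion in closed form via the variation-of-constants formula (Lemma~\ref{lemma:var_constants_proof}) applied with the scalar $A_k=k/(k+1)$ and forcing term $\varepsilon_k=x_{i_k}^*/(k+1)$. The products telescope: $\prod_{j=0}^{k-1}A_j=0$ because the $j=0$ factor vanishes, so the dependence on $x^0$ is annihilated; meanwhile $\prod_{j=i+1}^{k-1}A_j=(i+1)/k$, whence $\big(\prod_{j=i+1}^{k-1}A_j\big)\varepsilon_i=x_{i_i}^*/k$. This yields
\[
x^{k}=\frac{1}{k}\sum_{j=0}^{k-1}x_{i_j}^*,
\]
i.e. the iterate is exactly the empirical mean of the $k$ independently sampled optimal points.

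Finally I would pass to expectations. Since the minibatches are drawn independently and uniformly at batch size $1$, the $x_{i_j}^*$ are i.i.d.\ with mean $\Exp[x_{i_j}^*]=\frac{1}{n}\sum_{i=1}^n x_i^*=\tilde x$ and common variance $\sigma_x^2:=\frac{1}{n}\sum_{i=1}^n(x_i^*-\tilde x)^2$. Therefore $\Exp[x^k]=\tilde x$ and, by independence,
\[
\Exp|x^{k}-\tilde x|^2=\frac{1}{k^2}\sum_{j=0}^{k-1}\Exp|x_{i_j}^*-\tilde x|^2=\frac{\sigma_x^2}{k}=\mathcal{O}(1/k).
\]
The separation $\tilde x\ne x^*$ follows because $x^*=\frac{\sum_i a_ix_i^*}{\sum_i a_i}$ is the curvature-weighted mean whereas $\tilde x$ is the unweighted mean; these coincide only when all $a_i$ are equal, which is excluded in the asymmetric setting of interest.

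There is no genuine analytic obstacle here: the entire difficulty is conceptual, namely recognizing that the Polyak stepsize erases the curvature $a_{i_k}$ and turns SGD into an unweighted running average of the $x_i^*$ --- precisely the mechanism behind the bias discussed in \S\ref{sec:bias}. The only points requiring care are the bookkeeping in the telescoping products (in particular the vanishing of the $x^0$-term) and the explicit identification of the limit point $\tilde x$ as the unweighted, rather than curvature-weighted, average.
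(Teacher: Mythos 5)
Your proposal is correct and follows essentially the same route as the paper's proof: both hinge on the observation that the Polyak stepsize cancels the curvature $a_{i_k}$, reducing the update to $x^{k+1}=\tfrac{k}{k+1}x^k+\tfrac{1}{k+1}x_{i_k}^*$, and both invoke the variation-of-constants formula (Lemma~\ref{lemma:var_constants_proof}) with the telescoping products $\prod_{j=\ell+1}^{k-1}\tfrac{j}{j+1}=\tfrac{\ell+1}{k}$. The only cosmetic difference is that you identify $x^k$ directly as the empirical mean of the i.i.d.\ samples $x_{i_j}^*$ and read off the variance $\sigma_x^2/k$ in one step, whereas the paper applies the variation-of-constants argument a second time to the conditional second-moment recursion to arrive at the same bound.
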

\begin{proof}To show that $x^k\to \tilde x$, first notice that the curvature gets canceled out in the update, due to correlation between $\gamma_k$ and $\nabla f_{i_k}(x^k)$.
\begin{equation}
x^{k+1} = x^k -\gamma_k \nabla f_{i_k}(x^k) = x^k - \frac{a_i (x^k-x_{i_k}^*)}{2c_ka_i} = x^k - \frac{x^k-x_{i_k}^*}{2c_k}
\end{equation}
Now let's add and subtract $\tilde x$ twice as follows:
\begin{equation}
x^{k+1}-\tilde x = x^k-\tilde x  - \frac{x^k-\tilde x +\tilde x -x_{i_k}^*}{2c_k} = \left(1-\frac{1}{2c_k}\right)(x^k-\tilde x)  + \frac{x_{i_k}^* - \tilde x}{2c_k}.
\end{equation}

From this equality it is already clear that in expectation the update is in the direction of $\tilde x$. To provide a formal proof of convergence the first step is to use the variations of constants formula~( Lemma~\ref{lemma:var_constants_proof}).
Therefore,
\begin{equation}
x^{k+1}-\tilde x =\left[\prod_{j=0}^{k}\left(1-\frac{1}{2c_j}\right)\right](x^0-\tilde x)  + \sum_{\ell=0}^{k}\left[ \prod_{j=\ell+1}^{k}\left(1-\frac{1}{2c_j}\right)\right] \frac{x_{i_\ell}^* - \tilde x}{2c_\ell}.
\end{equation}
If $c_j = (j+1)/2$ then $\left(1-\frac{1}{2c_j}\right) = \frac{j}{j+1}$ and therefore
\begin{equation}
    \prod_{j=\ell+1}^{k}\left(1-\frac{1}{2c_j}\right) = \frac{\ell+1}{\ell+2}\cdot\frac{\ell+2}{\ell+3}\cdots\frac{k-1}{k}\cdot\frac{k}{k+1} = \frac{\ell+1}{k+1}.
\end{equation}
Hence,
\begin{equation}
 \sum_{\ell=0}^{k}\left[ \prod_{j=\ell+1}^{k}\left(1-\frac{1}{2c_j}\right)\right] \frac{x_{i_\ell}^* - \tilde x}{2c_\ell} = \sum_{\ell=0}^{k}\frac{\ell+1}{k}\cdot \frac{x_{i_\ell}^* - \tilde x}{\ell+1} =\frac{1}{k} \sum_{\ell=0}^{k}(x_{i_\ell}^* - \tilde x).
\end{equation}
Moreover, since $\prod_{j=\ell+1}^{k}\left(1-\frac{1}{2c_j}\right)=0$ we have that $x^k\to\tilde x$ in distribution, by the law of large numbers.

Finally, to get a rate on the distance-shrinking, we take the expectation w.r.t. $i_k$ conditioned on $x_k$: the cross-term disappears and we get
\begin{align}
\Exp_{i_k}|x^{k+1}-\tilde x|^2 &= \left(1-\frac{1}{2c_k}\right)^2 |x^k-\tilde x|^2  + \frac{\Exp|x_{i_k}^* - \tilde x|^2}{4c_k^2}\\
&= \left(1-\frac{1}{2c_k}\right)^2 |x^k-\tilde x|^2  + \frac{\Exp|x_{i_k}^* - \tilde x|^2}{4c_k^2}
\end{align}
Plugging in $c_k = (k+1)/2$, we get
\begin{equation}
    \Exp_{i_k}|x^{k+1}-\tilde x|^2 = \left(\frac{k}{k+1}\right)^2 |x^k-\tilde x|^2  + \frac{\Exp|x_{i_k}^* - \tilde x|^2}{(k+1)^2}.
\end{equation}
Therefore, using the tower property and the variation of constants formula,
\begin{align}
\Exp|x^{k+1}-\tilde x|^2 &=\left[\prod_{j=0}^{k}\left(\frac{j}{j+1}\right)^2\right]|x^{0}-\tilde x|^2  + \sum_{\ell=0}^{k}\left[ \prod_{j=\ell+1}^{k}\left(\frac{j}{j+1}\right)^2\right] \frac{\Exp|x_{i_\ell}^* - \tilde x|^2}{(\ell+1)^2}\\
&= \sum_{\ell=0}^{k}\frac{(\ell+1)^2}{(k+1)^2} \frac{\Exp|x_{i_\ell}^* - \tilde x|^2}{(\ell+1)^2} = \frac{\Exp|x_{i_\ell}^* - \tilde x|^2}{k+1}.
\end{align}
This concludes the proof.
\end{proof}

\subsection{Asymptotic vanishing of the SPS bias in 1d quadratics}
\label{app:bias_corr}
As the number of datapoints grows, the bias in the SPS solution~(Prop.~\ref{prop:bias}) is alleviated by an averaging effect. Indeed, if we let each pair $(a_i, x_i^*)$ to be sampled i.i.d, for every $n\in\mathbb{N}$ we have
\begin{multline}
    \Exp_{a_i,x_i^*} \left[\frac{\sum_{i=1}^n a_ix_i^*}{\sum_{i=1}^n a_i}\right] = \Exp_{a_i | x_i^*}\Exp_{x_i^*}\left[\frac{\sum_{i=1}^n a_ix_i^*}{\sum_{i=1}^n a_i}\right] \\= \Exp_{a_i | x_i^*}\left[\frac{\sum_{i=1}^n a_i \Exp_{x_i^*}[x_i^*]}{\sum_{i=1}^n a_i}\right] = \Exp_{a_i | x_i^*}\left[\frac{\sum_{i=1}^n a_i}{\sum_{i=1}^n a_i} x^*\right]  = x^*.
\end{multline}
As $n\to\infty$, it is possible to see that, under some additional assumptions~(e.g. $a_i$ Beta-distributed), the variance of $\frac{\sum_{i=1}^n a_ix_i^*}{\sum_{i=1}^n a_i}$ collapses to zero, hence one has $\frac{\sum_{i=1}^n a_ix_i^*}{\sum_{i=1}^n a_i}\to x^*$ in probability, as $n\to\infty$, with rate $\mathcal{O}(1/n)$. 

First, recall the law of total variance: $\text{Var}[Z] = \text{Var}[\Exp[Z|W]] + \Exp[\text{Var}[Z|W]]$. In our case, setting $Z:=\frac{\sum_{i=1}^n a_ix_i^*}{\sum_{i=1}^n a_i}$ and $W = a_i$, the first term is zero since $x^*$ is independent of $(a_i)_{i=1}^n$. Hence

\begin{align}
    \text{Var}\left[\frac{\sum_{i=1}^n a_ix_i^*}{\sum_{i=1}^n a_i}\right] = \Exp_{a_i | x_i*}\text{Var}_{x_i*}\left[\frac{\sum_{i=1}^n a_ix_i^*}{\sum_{i=1}^n a_i}\right] \\= \Exp\left[\frac{\sum_{i=1}^n a_i^2\text{Var}[x_i^*]}{(\sum_{i=1}^n a_i)^2}\right] = \Exp\left[\frac{\sum_{i=1}^n a_i^2}{(\sum_{i=1}^n a_i)^2}\right]\text{Var}[x_i^*].
\end{align}

Evaluating $\Exp\left[\frac{\sum_{i=1}^n a_i^2}{(\sum_{i=1}^n a_i)^2}\right]$ might be complex, yet if e.g. one assumes e.g. $a_i\sim\Gamma(k, \lambda)$~(positive support, to ensure convexity), then it is possible to show\footnote{This derivation was posted on the Mathematics StackExchange at \url{https://math.stackexchange.com/questions/138290/finding-e-left-frac-sum-i-1n-x-i2-sum-i-1n-x-i2-right-of-a-sam?rq=1} and we report it here for completeness.} that $\Exp\left[\frac{\sum_{i=1}^n a_i^2}{(\sum_{i=1}^n a_i)^2}\right] = \mathcal{O}(1/n)$. First, recall that, for $q\ge0$,
\begin{equation}
    \frac{1}{q^2} = \int_0^\infty t \mathrm{e}^{-q t} dt.
\end{equation}
We rewrite the expectation as follows:
\begin{align}
   \mathbb{E}\left[\frac{\sum_{i=1}^n a_i^2}{\left(\sum_{i=1}^n a_i \right)^2} \right] &= \int_0^\infty t \cdot \mathbb{E}\left[\sum_{i=1}^n a_i^2 \cdot \exp\left(-t \cdot \sum_{i=1}^n a_i\right) \right] \mathrm{d} t \\
  &= n \int_0^\infty t \cdot \mathbb{E}\left[a_1^2 \cdot \exp\left(-t \cdot \sum_{i=1}^n a_i\right) \right] \mathrm{d} t \\
  &= n \int_0^\infty t \cdot \mathbb{E}\left[a_1^2 \cdot \exp\left(-t a_1\right) \right]\cdot  \mathbb{E}\left[ \exp\left(-t \cdot \sum_{i=2}^n a_i\right) \right] \mathrm{d} t \\
   &= n \int_0^\infty t\cdot \mathcal{M}^{\prime\prime}_X(-t) \cdot \left(\mathcal{M}_X(-t)\right)^{n-1} \mathrm{d} t,
\end{align}
where $\mathcal{M}_X(t)$ denotes the moment generating function of the $\Gamma(k,\lambda)$ distribution. Next, we solve the integral using the closed-form expression $\mathcal{M}_X(t) = \left(1-\frac{t}{\lambda}\right)^{-k}$ for $t\le \lambda$ (else does not exist). Note that we integrate only for $t\le0$ so the MGF is always defined:
\begin{align}
   \mathbb{E}\left[\frac{\sum_{i=1}^n a_i^2}{\left(\sum_{i=1}^n a_i \right)^2} \right] &= n \int_0^\infty t \cdot \frac{k(k+1)}{\lambda^2} \left( 1+ \frac{t}{\lambda}\right)^{-2-k} \cdot \left(1+\frac{t}{\lambda}\right)^{-k (n-1)}  \mathrm{d} t  \\
   &=
    n k (k+1) \int_0^\infty \frac{u}{(1+u)^{k n +2}} \mathrm{d} u \\
    &= n k (k+1) \int_0^1 (1-s) s^{k n-1} \mathrm{d} s\\
    &= n k (k+1) \left( \frac{1}{n k} - \frac{1}{n k+1} \right)\\
    &=\frac{k+1}{k \cdot n +1}.
\end{align}
where in the third-last inequality we changed variables $t\to\lambda u$ and in the second last we changed variables $t\to\frac{1-s}{s}$.

All in all, we have that $\text{Var}\left[\frac{\sum_{i=1}^n a_ix_i^*}{\sum_{i=1}^n a_i}\right]= \mathcal{O}(1/n)$. This implies that $\frac{\sum_{i=1}^n a_ix_i^*}{\sum_{i=1}^n a_i}$ converges to $x^*$ in quadratic mean --- hence also in probability.

\section{Convergence of SGD with DecSPS in the smooth setting}
\label{sec:app_proof3}

Here we study Decreasing SPS~(DecSPS), which combines stepsize decrease with the adaptiveness of SPS.
\begin{equation}
\tag{DecSPS}
 \gamma_k:=\frac{1}{c_k}\min\left\{\frac{f_{\SC_k}(x^k)-\ell^*_{\SC_k}}{\|\nabla f_{\SC_k}(x^k)\|^2}, \ c_{k-1}\gamma_{k-1}\right\},
\end{equation}
for $k\ge0$, where we set $c_{-1}=c_0$ and $\gamma_{-1} = \gamma_b$~(stepsize bound, similar to~\citep{loizou2021stochastic}), to get
\begin{equation}
    \gamma_0:=\frac{1}{c_0}\cdot\min\left\{\frac{f_{\SC_k}(x^k)-\ell^*_{\SC_k}}{\|\nabla f_{\SC_k}(x^k)\|^2},\quad c_0\gamma_{b}\right\}.
\end{equation}

\subsection{Proof of Lemma~\ref{lemma:easy_bounds}}

\sandwichbounds*

\begin{proof}
First, note that  $\gamma_k$ is trivially \textit{non-increasing} since $\gamma_k\le c_{k-1}\gamma_{k-1}/c_k$. Next, we prove the bounds on $\gamma_k$.\\
For $k=0$, we can directly use Lemma~\ref{lemma:basic_L_sc}:
\begin{equation}
    \gamma_b\ge\gamma_0=\frac{1}{c_0}\cdot\min\left\{\frac{f_{\SC_k}(x^k)-\ell^*_{\SC_k}}{\|\nabla f_{\SC_k}(x^k)\|^2},\quad c_0\gamma_{b}\right\}\ge \min\left\{\frac{1}{2c_0 L_{\max}},\gamma_b\right\}.
\end{equation}
Next, we proceed by induction: we assume the proposition holds true for $\gamma_k$:
\begin{equation}
    \min\left\{\frac{1}{2c_{k} L_{\max}},\frac{c_0\gamma_b}{c_{k}}\right\}\le\gamma_{k}\le \frac{c_0\gamma_b}{c_{k}}.
\end{equation}
Then, we have : $\gamma_{k+1}=\frac{1}{c_{k+1}}\min\left\{\frac{f_{\SC_{k+1}}(x^{k+1})-f^*_{\SC_{k+1}}}{\|\nabla f_{\SC_{k+1}}(x^{k+1})\|^2},\iota\right\}$, where 
\begin{equation}
    \iota := c_{k}\gamma_{k}\in\left[\min\left\{ \frac{1}{2L_{\max}},c_{0}\gamma_b\right\}, c_{0}\gamma_b\right]
\end{equation}
by the induction hypothesis. This bound directly implies that the proposition holds true for $\gamma_{k+1}$, since again by Lemma~\ref{lemma:basic_L_sc} we have $\frac{f_{\SC_{k+1}}(x^{k+1})-f^*_{\SC_{k+1}}}{\|\nabla f_{\SC_{k+1}}(x^{k+1})\|^2}\ge \frac{1}{2L_{\max}}$. This concludes the induction step.
\end{proof}

\subsection{Proof of Thm.~\ref{thm:SPS_bounded_domain_cvx}}

\begin{remark}[Why was this challenging?]
The fundamental problem towards a proof for DecSPS is that the error to control due to gradient stochasticity does not come from the term $\gamma_k^2\|\nabla f(x^k)\|^2$ in the expansion of $\|x^k-x^*\|^2$, as instead is usual for SGD with decreasing stepsizes. Instead, the error comes from the inner product term $\gamma_k\langle \nabla f(x^k),x^k-x^*\rangle$. Hence, the error is proportional to $\gamma_k$, and not $\gamma^2$. As a result, the usual Robbins-Monro conditions~\citep{robbins1951stochastic} do not yield convergence. A similar problem is discussed for AdaGrad in~\citep{ward2019adagrad}.
\end{remark}

{\color{orange}In the first version of this paper, the proof contained a small error}\footnote{This was in Eq.~\eqref{eq:error_v1}, where we incorrectly bounded $\left(2 -\frac{1}{c_k} \right)$ with the constant $1$. This, of course, cannot be done since the term $f_{\SC_k}(x^k)- f_{\SC_k}(x^*)$ is not positive. However, it's expectation is positive. Since $\left(2 -\frac{1}{c_k} \right)$ is a constant, we avoid bounding it in the early stages and instead bound it after taking the expectation at the end of the proof. The result is unchanged.}{\color{orange}. This is now fixed, the result is unchanged.}

\decSPScvx*
\begin{proof}
Note that from the definition $\gamma_k:=\frac{1}{c_k}\cdot\min\left\{\frac{f_{\SC_k}(x^k)-\ell^*_{\SC_k}}{\|\nabla f_{\SC_k}(x^k)\|^2},\quad c_{k-1}\gamma_{k-1}\right\}$, we have that:
\begin{equation}
    \gamma_k\le\frac{1}{c_k}\cdot\frac{f_{\SC_k}(x^k)-\ell^*_{\SC_k}}{\|\nabla f_{\SC_k}(x^k)\|^2}.
\end{equation}
Multiplying by $\gamma_k$ and rearranging terms we get the fundamental inequality
\begin{equation}
\label{nadsjkda}
    \gamma^2_k \|\nabla f_{\SC_k}(x^k)\|^2\le\frac{\gamma_k}{c_k}[f_{\SC_k}(x^k)-\ell^*_{\SC_k}],
\end{equation}
Using the definition of DecSPS and convexity we get
\begin{align}
&\|x^{k+1}-x^*\|^2 \\
&=\|x^k-\gamma_k \nabla f_{\SC_k}(x^k)-x^*\|^2\\
&\overset{\eqref{nadsjkda}}{\leq} \|x^{k}-x^*\|^2 - 2 \gamma_k \langle\nabla f_{\SC_k}(x^k),  x^k-x^* \rangle +  \frac{\gamma_k }{c_k}(f_{\SC_k}(x^k)-\ell^*_{\SC_k})
\end{align}
Next, using convexity,
\begin{align}
&\|x^{k+1}-x^*\|^2\\&\leq \|x^{k}-x^*\|^2-2 \gamma_k [f_{\SC_k}(x^k)- f_{\SC_k}(x^*)]+  \frac{\gamma_k }{c_k}[f_{\SC_k}(x^k)-f_{\SC_k}(x^*)+ f_{\SC_k}(x^*)-\ell^*_{\SC_k}]\\
&= \|x^{k}-x^*\|^2-2 \gamma_k [f_{\SC_k}(x^k)- f_{\SC_k}(x^*)]+  \frac{\gamma_k }{c_k}[f_{\SC_k}(x^k)-f_{\SC_k}(x^*)]+ \frac{\gamma_k }{c_k} [f_{\SC_k}(x^*)-\ell^*_{\SC_k}]\\
&= \|x^{k}-x^*\|^2-\left(2 -\frac{1}{c_k} \right)\gamma_k [f_{\SC_k}(x^k)- f_{\SC_k}(x^*)]+  \frac{\gamma_k }{c_k}[f_{\SC_k}(x^*)-\ell^*_{\SC_k}].
\end{align}
Let us divide everything by $\gamma_k>0$.

\begin{equation}
\frac{\|x^{k+1}-x^*\|^2}{\gamma_k} \leq \frac{\|x^{k}-x^*\|^2}{\gamma_k}-\left(2 -\frac{1}{c_k} \right) [f_{\SC_k}(x^k)- f_{\SC_k}(x^*)]+  \frac{1 }{c_k}[f_{\SC_k}(x^*)-\ell^*_{\SC_k}].
\end{equation}
Rearranging,
\begin{equation}
\left(2 -\frac{1}{c_k} \right) [f_{\SC_k}(x^k)- f_{\SC_k}(x^*)] \leq \frac{\|x^{k}-x^*\|^2}{\gamma_k}-\frac{\|x^{k+1}-x^*\|^2}{\gamma_k}+  \frac{1 }{c_k}[f_{\SC_k}(x^*)-\ell^*_{\SC_k}].
\label{eq:error_v1}
\end{equation}
Next, summing from $k=0$ to $K-1$:
\begin{equation}
 \sum_{k=0}^{K-1} \left(2 -\frac{1}{c_k} \right) [f_{\SC_k}(x^k)- f_{\SC_k}(x^*)] \leq \sum_{k=0}^{K-1} \frac{\|x^{k}-x^*\|^2}{\gamma_k}-\sum_{k=0}^{K-1} \frac{\|x^{k+1}-x^*\|^2}{\gamma_k}+  \sum_{k=0}^{K-1} \frac{1 }{c_k}[f_{\SC_k}(x^*)-\ell^*_{\SC_k}].
\end{equation}
And therefore
\begin{align}
 &\sum_{k=0}^{K-1} \left(2 -\frac{1}{c_k} \right)[f_{\SC_k}(x^k)- f_{\SC_k}(x^*)]\\
 &\leq \sum_{k=0}^{K-1} \frac{\|x^{k}-x^*\|^2}{\gamma_k}-\sum_{k=0}^{K-1} \frac{\|x^{k+1}-x^*\|^2}{\gamma_k}+  \sum_{k=0}^{K-1} \frac{1 }{c_k}[f_{\SC_k}(x^*)-\ell^*_{\SC_k}]\\
 &\leq \frac{\|x^{0}-x^*\|^2}{\gamma_0}+\sum_{k=1}^{K-1} \frac{\|x^{k}-x^*\|^2}{\gamma_k}-\sum_{k=0}^{K-2} \frac{\|x^{k+1}-x^*\|^2}{\gamma_k}-\frac{\|x^{K}-x^*\|^2}{\gamma_{K-2}} +  \sum_{k=0}^{K-1} \frac{1 }{c_k}[f_{\SC_k}(x^*)-\ell^*_{\SC_k}]\\
 &\leq \frac{\|x^{0}-x^*\|^2}{\gamma_0}+\sum_{k=0}^{K-2} \frac{\|x^{k+1}-x^*\|^2}{\gamma_{k+1}}-\sum_{k=0}^{K-2} \frac{\|x^{k+1}-x^*\|^2}{\gamma_k} +  \sum_{k=0}^{K-1} \frac{1 }{c_k}[f_{\SC_k}(x^*)-\ell^*_{\SC_k}]\\
 &\leq \frac{\|x^{0}-x^*\|^2}{\gamma_0}+\sum_{k=0}^{K-2} \left(\frac{1}{\gamma_{k+1}}-\frac{1}{\gamma_{k}}\right)\|x^{k+1}-x^*\|^2 +  \sum_{k=0}^{K-1} \frac{1 }{c_k}[f_{\SC_k}(x^*)-\ell^*_{\SC_k}]\\
 &\leq D^2\left[\frac{1}{\gamma_0}+\sum_{k=0}^{K-2} \left(\frac{1}{\gamma_{k+1}}-\frac{1}{\gamma_{k}}\right)\right]+  \sum_{k=0}^{K-1} \frac{1 }{c_k}[f_{\SC_k}(x^*)-\ell^*_{\SC_k}]\label{noajsnxao2}\\
  &\leq \frac{D^2}{\gamma_{K-1}} +  \sum_{k=0}^{K-1} \frac{1 }{c_k}[f_{\SC_k}(x^*)-\ell^*_{\SC_k}].
  \label{noajsnxao}
\end{align}

\begin{remark}[Where did we use the modified SPS definition?]
In step \eqref{noajsnxao2}, we are able to collect $D^2$ because $\left(\frac{1}{\gamma_{k+1}}-\frac{1}{\gamma_{k}}\right)\ge0$. This is guaranteed by the new SPS definition (DecSPS), along with the fact that $c_k$ is increasing. Note that one could not perform this step under the original SPS update rule of~\citep{loizou2021stochastic}.
\end{remark}
Thanks to Lemma~\ref{lemma:easy_bounds}, we have:
\begin{equation*}
    \gamma_k\ge \min\left\{\frac{1}{2c_k L_{\max}},\frac{c_0\gamma_b}{c_k}\right\}.
\end{equation*}
Hence,
\begin{equation}
\label{casojnxa}
    \frac{1}{\gamma_k}\le  c_k \cdot \max\left \{2 L_{\max},\frac{1}{c_0\gamma_b}\right\}.
\end{equation}
Let us call $\tilde L = \max\left \{ L_{\max},\frac{1}{2c_0\gamma_b}\right\}$. By combining \eqref{casojnxa} with \eqref{noajsnxao} and dividing by $K$ we get:
\begin{equation}
 \frac{1}{K}\sum_{k=0}^{K-1}\left(2 -\frac{1}{c_k} \right) [f_{\SC_k}(x^k)- f_{\SC_k}(x^*)] \leq \frac{2c_{K-1} \tilde L D^2 }{K} +  \frac{1}{K}\sum_{k=0}^{K-1} \frac{[f_{\SC_k}(x^*)-\ell^*_{\SC_k}]}{c_k},
\end{equation}
Taking the expectation, 
\begin{equation}
 \frac{1}{K}\sum_{k=0}^{K-1}\left(2 -\frac{1}{c_k} \right) \Exp[f(x^k)- f(x^*)] \leq \frac{2c_{K-1} \tilde L D^2 }{K} +  \frac{1}{K}\sum_{k=0}^{K-1} \frac{[f_{\SC_k}(x^*)-\ell^*_{\SC_k}]}{c_k}.
\end{equation}
Since by hypothesis $c_k\ge1$ for all $k\in\mathbb{N}$, we have $\left(2 -\frac{1}{c_k} \right)\ge1$ and therefore
\begin{equation}
 \frac{1}{K}\sum_{k=0}^{K-1} \Exp[f(x^k)- f(x^*)] \leq \frac{2c_{K-1} \tilde L D^2 }{K} +  \frac{1}{K}\sum_{k=0}^{K-1} \frac{[f_{\SC_k}(x^*)-\ell^*_{\SC_k}]}{c_k}.
\end{equation}
We conclude by using Jensen's inequality as follows:
\begin{equation}
   \Exp \left[f(\bar{x}^K)-f(x^*)\right] \overset{\text{Jensen}}{\leq}   \frac{1}{K}\sum_{k=0}^{K-1} \Exp \left[f(x^k)-f(x^*)\right] \leq \frac{2c_{K-1} \tilde L D^2 }{K} +  \frac{1}{K}\sum_{k=0}^{K-1} \frac{\hat\sigma^2_{B}}{c_k}.
\end{equation}
where $\hat\sigma^2_{B}$ is as defined in \eqref{sigmahat}.
\end{proof}

\begin{remark}[Second term does not depend on $\gamma_b$]
Note that, in the convergence rate, the second term does not depend on $\gamma_b$ while the first does. This is different from the original SPS result~\citep{loizou2021stochastic}, and due to the different proof technique: specifically, we divide by $\gamma_k$ early in the proof --- and not at the end. To point to the exact source of this difference, we invite the reader to inspect Equation 24 in the appendix\footnote{\url{http://proceedings.mlr.press/v130/loizou21a/loizou21a-supp.pdf}} of~\citep{loizou2021stochastic}: the last term there is proportional to $\gamma_b/\alpha$, where $\alpha$ is a lower bound on the SPS and $\gamma_b$ is an upper bound. In our proof approach, these terms --- which bound the same quantity --- effectively cancel out~(because we divide by $\gamma_k$ earlier in the proof), at the price of having $D^2$ in the first term.
\end{remark}

\subsection{Proof of Prop.~\ref{prop:strong_convex_bound_2}}

We need the following lemma. An illustration of the result can be found in Fig.~\ref{fig:verification_lemma_A3}.

\begin{restatable}[]{lem}{probonenobound}
\label{lemma:probonenobound}
Let $z^{k+1} = A_k z^k + \varepsilon_k$ with $A_k = (1-a/\sqrt{k+1})$ and $\varepsilon_k =b/\sqrt{k+1}$. If $z^0>0$, $0<a\le 1$,  $b>0$, then $z^k \le\max\{z^0, b/a\}$ for all $k\geq 0$.
\end{restatable}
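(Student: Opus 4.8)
The plan is to establish the uniform bound by a single induction, using $M := \max\{z^0, b/a\}$ as an invariant upper bound. The reason this particular constant is the right one is that $b/a$ is an \emph{exact} fixed point of the recursion at every step: substituting $z^k = b/a$ gives $z^{k+1} = (1 - a/\sqrt{k+1})(b/a) + b/\sqrt{k+1} = b/a$, independently of $k$. Hence once the sequence sits at (or below) the level $b/a$ it cannot be pushed above it, while if it starts higher at $z^0$ the contraction can only pull it back down toward $b/a$; in all cases $\max\{z^0, b/a\}$ should dominate the whole trajectory.

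First I would record the elementary but crucial observation that the multiplier is nonnegative. For $k \ge 0$ we have $\sqrt{k+1} \ge 1$, and since $0 < a \le 1$ this gives $a/\sqrt{k+1} \le a \le 1$, so that $A_k = 1 - a/\sqrt{k+1} \in [0,1)$. This is exactly the place where the hypothesis $a \le 1$ enters, and the sign information is what makes the induction valid.

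The induction itself is then immediate. The base case $z^0 \le M$ holds by the definition of $M$. For the inductive step, assume $z^k \le M$; since $A_k \ge 0$ we may multiply through to get $A_k z^k \le A_k M$, and therefore
\[
z^{k+1} = A_k z^k + \varepsilon_k \le A_k M + \frac{b}{\sqrt{k+1}} = M - \frac{aM - b}{\sqrt{k+1}}.
\]
Because $M \ge b/a$ forces $aM - b \ge 0$, the subtracted quantity is nonnegative, yielding $z^{k+1} \le M$ and closing the induction.

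There is essentially no genuine obstacle here: the whole argument reduces to a one-line invariance check once $M$ is correctly identified through the fixed-point computation. The only subtle point is the nonnegativity of $A_k$ — without it the implication $z^k \le M \Rightarrow A_k z^k \le A_k M$ would fail — which is why the assumption $0 < a \le 1$ is needed. I would note in passing that the positivity of $z^0$ and $b$ plays no role in the upper bound (it only ensures $M > 0$ and keeps the fixed-point picture meaningful); the bound $z^k \le \max\{z^0, b/a\}$ follows purely from the contraction structure.
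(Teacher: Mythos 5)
Your proof is correct and follows essentially the same induction on the invariant $M=\max\{z^0,b/a\}$ as the paper; the only difference is that you merge the paper's two cases (whether the max is $z^0$ or $b/a$) into the single inequality $aM-b\ge 0$, and you make explicit the nonnegativity of $A_k$ that the paper uses implicitly. No gaps.
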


\begin{proof}
Simple to prove by induction. The base case is trivial, since $z^0 \le\max\{z^0, b/a\}$. Let us now assume the proposition holds true for $z^k$ (that is, $z^k \le\max\{z^0, b/a\}$) , we want to show it holds true for $k+1$. We have
\begin{equation}
    z^{k+1} = \left(1-\frac{a}{\sqrt{k+1}}\right) z^k + \frac{b}{\sqrt{k+1}}.
\end{equation}
If $b/a = \max\{z^0, b/a\}$, then we get, by induction
\begin{equation}
    z^{k+1} \le \left(1-\frac{a}{\sqrt{k+1}}\right) \frac{b}{a} + \frac{b}{\sqrt{k+1}} =\frac{b}{a}=\max\{z^0, b/a\}.
\end{equation}
Else, if  $z^0=\max\{z^0, b/a\}$, then by induction
\begin{equation}
    z^{k+1} \le \left(1-\frac{a}{\sqrt{k+1}}\right) z^0 + \frac{b}{\sqrt{k+1}} = z^0 -\frac{a z^0 -b}{\sqrt{k+1}}\le z^0=\max\{z^0, b/a\},
\end{equation}
where the last inequality holds because $a z^0 -b>0$ and $a$ is positive. This completes the proof. 
\end{proof}

\begin{figure}
    \centering
    \includegraphics[width=0.35\textwidth]{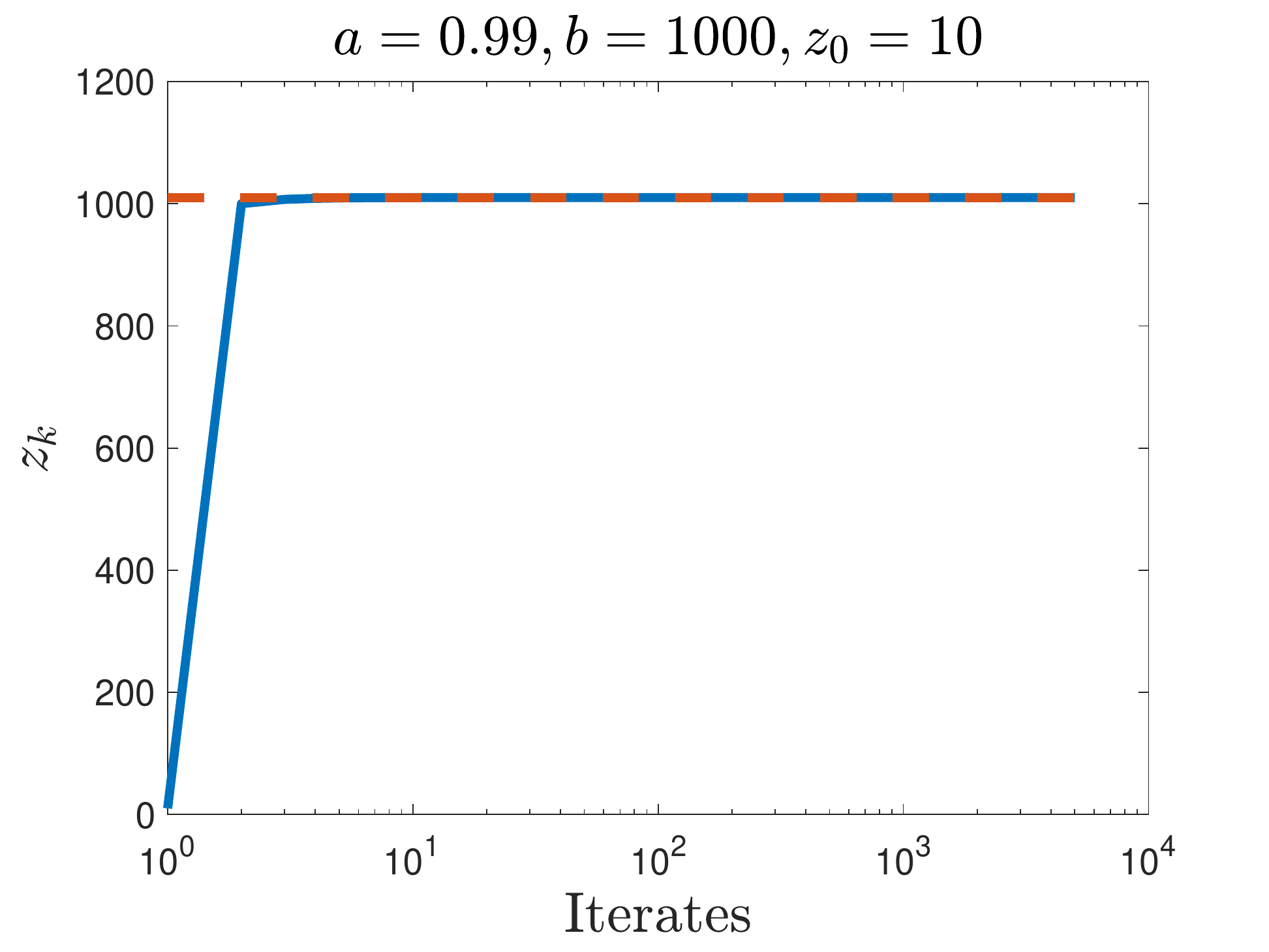}
    \includegraphics[width=0.35\textwidth]{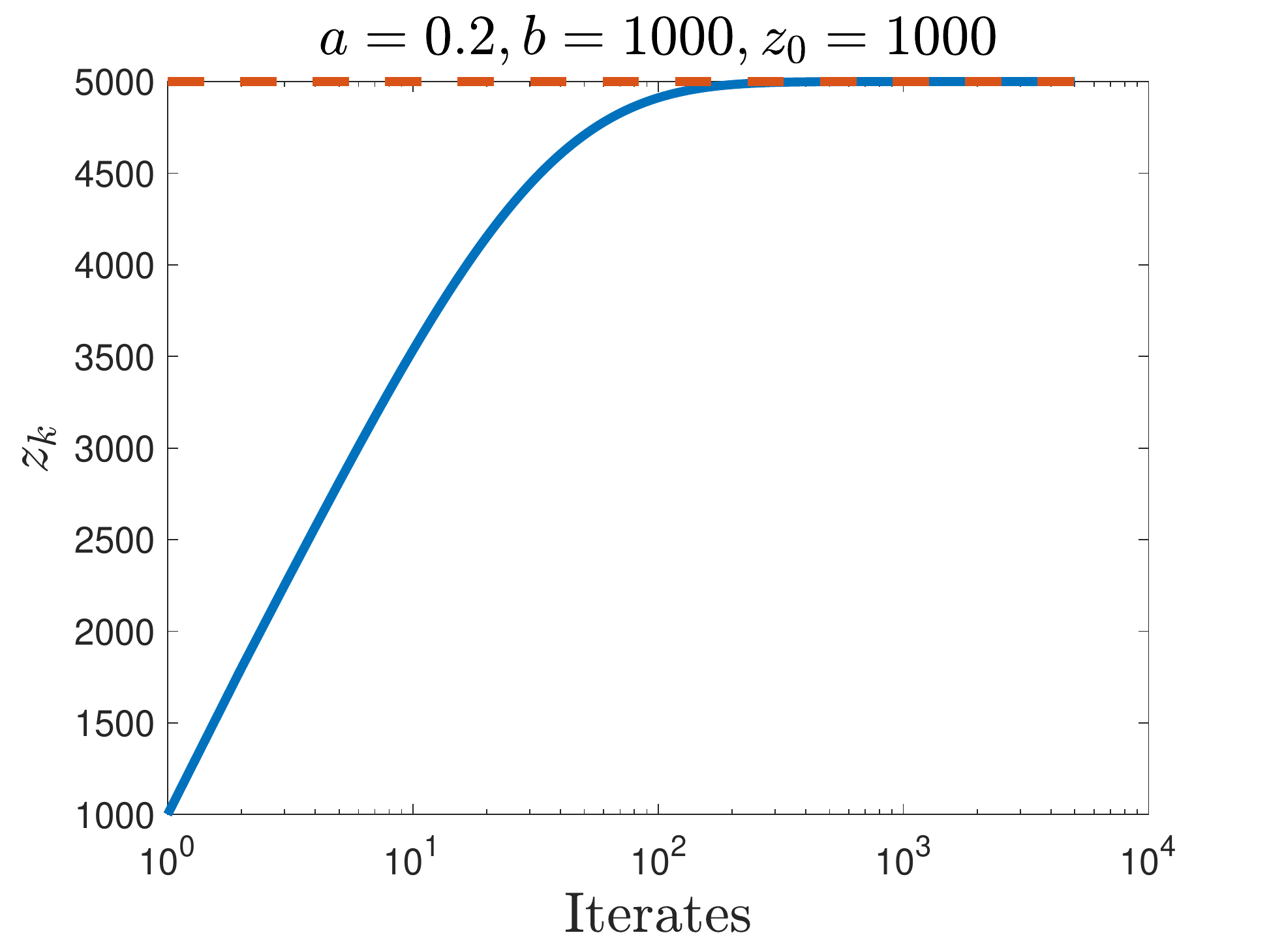}\\
    \includegraphics[width=0.35\textwidth]{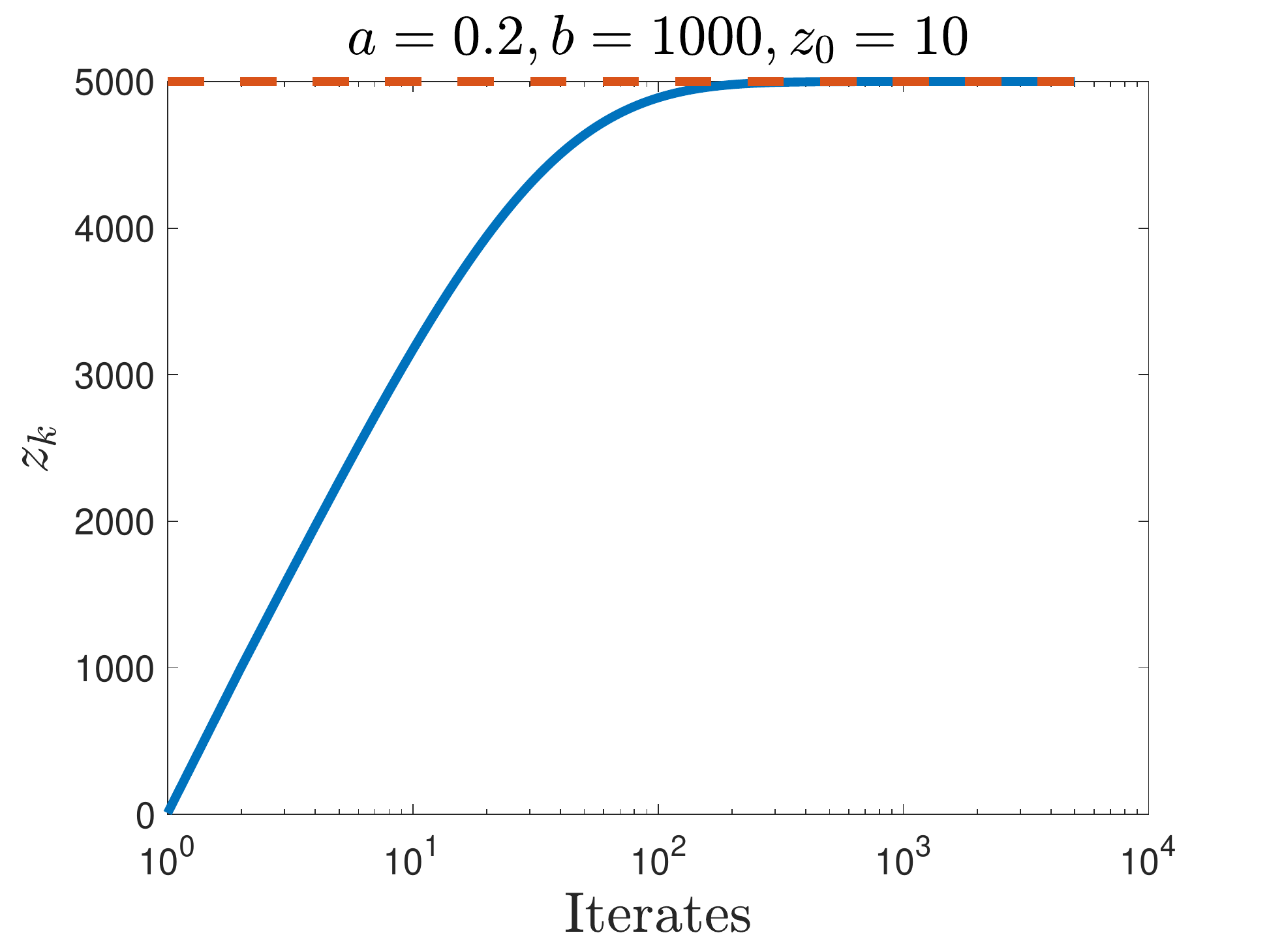}
    \includegraphics[width=0.35\textwidth]{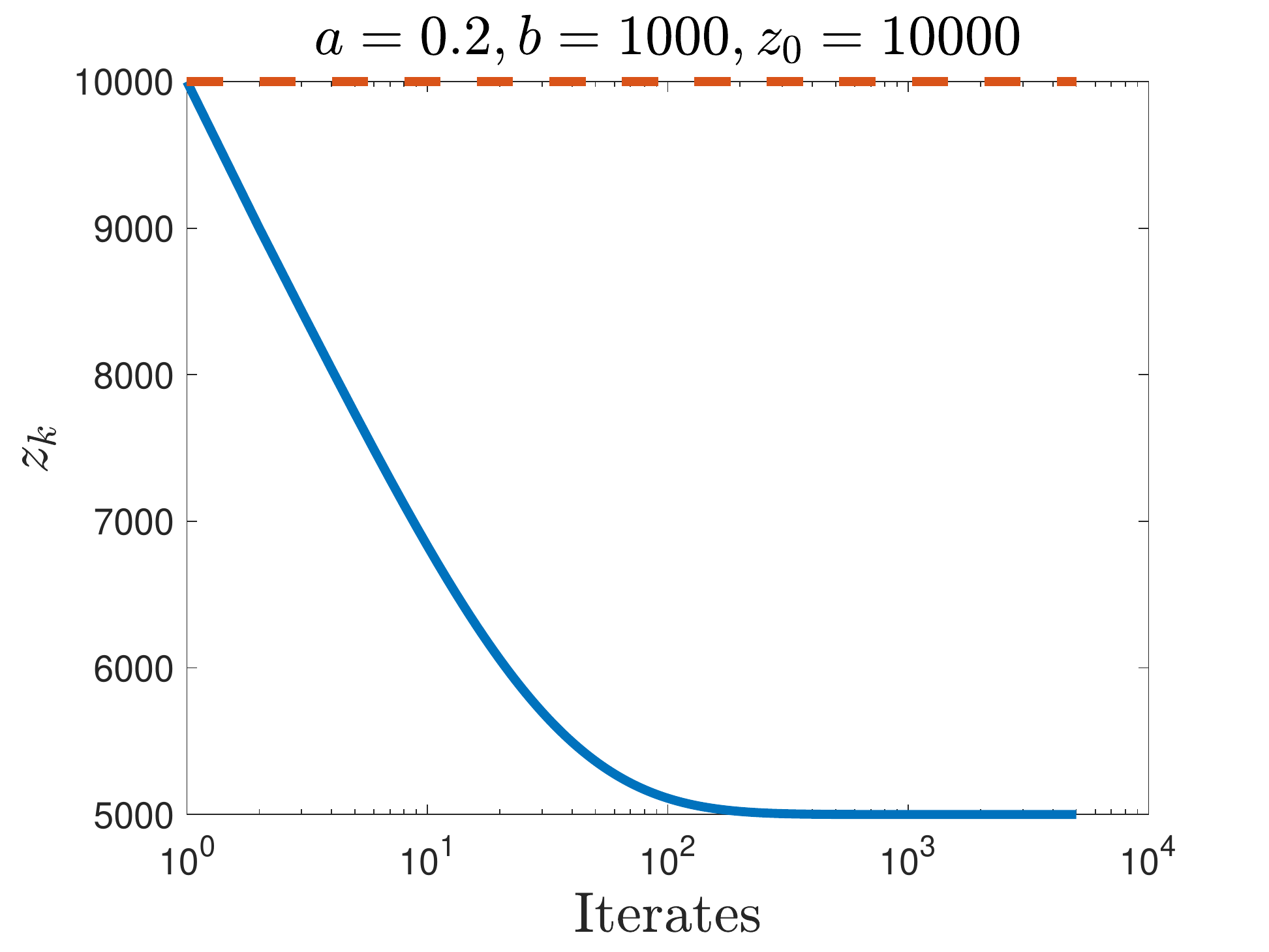}    
    \caption{Numerical Verification of Lemma~\ref{lemma:probonenobound}. Bound in the lemma is indicated with dashed line.}
    \label{fig:verification_lemma_A3}
\end{figure}

\PropSCUnbound*
\begin{proof}
Using the SPS definition we directly get
\begin{align}
\|x^{k+1}-x^*\|^2 &=\|x^k-\gamma_k \nabla f_{\SC_k}(x^k)-x^*\|^2\\
&=\|x^k-x^*\|^2-2 \gamma_k \langle\nabla f_{\SC_k}(x^k), x^k-x^* \rangle + \gamma_k^2 \| \nabla f_{\SC_k}(x^k)\|^2\\
&\leq\|x^k-x^*\|^2-2 \gamma_k \langle\nabla f_{\SC_k}(x^k),  x^k-x^* \rangle +  \frac{\gamma_k }{c_k}(f_{\SC_k}(x^k)-\ell^*_{\SC_k}),
\label{eq:SC-naive-1}
\end{align}
where (as always) we used the fact that since from the definition $\gamma_k:=\frac{1}{c_k}\cdot\min\left\{\frac{f_{\SC_k}(x^k)-\ell^*_{\SC_k}}{\|\nabla f_{\SC_k}(x^k)\|^2},\quad c_{k-1}\gamma_{k-1}\right\}$, then $\gamma_k\le\frac{1}{c_k}\cdot\frac{f_{\SC_k}(x^k)-\ell^*_{\SC_k}}{\|\nabla f_{\SC_k}(x^k)\|^2}$ and we have 
\begin{equation}
    \gamma^2_k \|\nabla f_{\SC_k}(x^k)\|^2\le\frac{1}{c_k}[f_{\SC_k}(x^k)-\ell^*_{\SC_k}].
\end{equation}
Now recall that, if each $f_{i}$ is $\mu_{i}$-strongly convex then for any $x,y\in\R^d$ we have

\begin{equation}
    -\langle \nabla f_{\SC_k}(x),x-y\rangle \le-\frac{\mu_{\min}}{2} \|x-y\|^2 - f_{\SC_k}(x) + f_{\SC_k}(y).
\end{equation}
For $y = x^*$ and $x = x^k$, this implies
\begin{equation}
    -\langle \nabla f_{\SC_k}(x^k),x^k-x^*\rangle \le-\frac{\mu_{\min}}{2} \|x^k-x^*\|^2 - f_{\SC_k}(x^k) + f_{\SC_k}(x^*).
\end{equation}

Adding and subtracting $\ell_{\SC_k}^*$ to the RHS of the inequality above, we get

\begin{equation}
    -\langle \nabla f_{\SC_k}(x^k),x^k-x^*\rangle \le-\frac{\mu_{\min}}{2} \|x^k-x^*\|^2 - (f_{\SC_k}(x^k)-\ell_{\SC_k}^*) + (f_{\SC_k}(x^*)-\ell_{\SC_k}^*).
\end{equation}

Since $\gamma_k>0$, we can substitute this inequality in Equation~\eqref{eq:SC-naive-1} and get
\begin{align}
\|x^{k+1}-x^*\|^2 \leq & \ \|x^k-x^*\|^2+  \frac{\gamma_k }{c_k}(f_{\SC_k}(x^k)-\ell^*_{\SC_k})\\ &  \underbrace{-\mu_{\min}\gamma_k \|x^k-x^*\|^2 - 2\gamma_k(f_{\SC_k}(x^k)-\ell_{\SC_k}^*) + 2\gamma_k(f_{\SC_k}(x^*)-\ell_{\SC_k}^*) }_{\le - 2 \gamma_k \langle\nabla f_{\SC_k}(x^k),  x^k-x^* \rangle}.
\end{align}
Rearranging a few terms we get
\begin{align}
    &\|x^{k+1}-x^*\|^2\\ &\leq (1-\mu_{\min}\gamma_k)\|x^k-x^*\|^2+  \frac{\gamma_k }{c_k}(f_{\SC_k}(x^k)-\ell^*_{\SC_k}) - 2\gamma_k(f_{\SC_k}(x^k)-\ell_{\SC_k}^*) + 2\gamma_k(f_{\SC_k}(x^*)-\ell_{\SC_k}^*) \\
    &\leq (1-\mu_{\min}\gamma_k)\|x^k-x^*\|^2- \left(2-\frac{1}{c_k}\right)\gamma_k(f_{\SC_k}(x^k)-\ell_{\SC_k}^*) + 2\gamma_k(f_{\SC_k}(x^*)-\ell_{\SC_k}^*).
\end{align}

Since we assumed $c_k\ge1/2$ for all $k\in\mathbb{N}$, we can drop the term $- \left(2-\frac{1}{c_k}\right)\gamma_k[f_{\SC_k}(x^k)-f_{\SC_k}^*]$, since also $f_{\SC_k}^*\ge\ell_{\SC_k}^*$. Hence, we get the following bound:
\begin{align}
    \|x^{k+1}-x^*\|^2 &\leq (1-\mu_{\min}\gamma_k) \|x^k-x^*\|^2 + 2\gamma_k(f_{\SC_k}(x^*)-\ell_{\SC_k}^*)\\
    &\leq\left(1-\mu_{\min}\gamma_k\right) \|x^k-x^*\|^2 + \frac{2c_0\gamma_b}{ c_{k}}(f_{\SC_k}(x^*)-\ell_{\SC_k}^*)\\
    &\leq\left(1-\mu_{\min}\gamma_k\right) \|x^k-x^*\|^2 + \frac{2c_0\gamma_b \hat\sigma^2_{B,\max}}{ c_{k}}
\end{align}
where we used the inequality $\min\left\{\frac{1}{2c_k L_{\max}},\frac{c_0\gamma_b}{c_k}\right\}\le\gamma_k\le \frac{c_0\gamma_b}{c_k}$~(Lemma~\ref{lemma:easy_bounds}). 

Now we seek an upper bound for the contraction factor. Under $c_k = \sqrt{k+1}$, using again Lemma~\ref{lemma:easy_bounds} we have, since $c_0=1$,
\begin{equation}
    1-\mu_{\min}\gamma_k \ge 1-\frac{\min\left\{\frac{\mu_{\min}}{2 L_{\max}},\mu_{\min}\gamma_b\right\}}{\sqrt{k+1}}.
\end{equation}
Now have all ingredients to bound the iterates: the result follows from Lemma~\ref{lemma:probonenobound} using $a = \min\left\{\frac{\mu_{\min}}{2 L_{\max}},\mu_{\min}\gamma_b\right\}$ and $b = 2c_0\gamma_b\hat\sigma^2_{B,\max}$. So, we get
\begin{equation}
    \|x^{k+1}-x^*\|^2 \leq \max\left\{\|x^{0}-x^*\|^2, \frac{2c_0\gamma_b\hat\sigma^2_{B,\max}}{\min\left\{\frac{\mu_{\min}}{2 L_{\max}},\mu_{\min}\gamma_b\right\}}\right\}, \text{for all } k\ge 0.
\end{equation}
This completes the proof.
\end{proof}

\section{Convergence of stochastic subgradient method with DecSPS-NS in the non-smooth setting}
\label{sec:app_proof4}

In this subsection we consider the DecSPS-NS stepsize in the non-smooth setting:
\begin{equation}
    \gamma_k:=\frac{1}{c_k}\cdot\min\left\{\max\left\{c_0\gamma_{\ell}, \frac{f_{\SC_k}(x^k)-\ell^*_{\SC_k}}{\|g_{\SC_k}(x^k)\|^2}\right\}, c_{k-1}\gamma_{k-1}\right\},
\end{equation}
where $g_{\SC_k}(x^k)$ is the stochastic subgradient using batch size $\SC_k$ at iteration $k$, and we set $c_{-1}=c_0$ and $\gamma_{-1} = \gamma_b$ to get

\begin{equation}
    \gamma_0:=\frac{1}{c_0}\cdot\min\left\{\max\left\{c_0\gamma_{\ell},\frac{f_{\SC_k}(x^k)-\ell^*_{\SC_k}}{\|g_{\SC_k}(x^k)\|^2}\right\},\quad c_0\gamma_{b}\right\}.
\end{equation}

\subsection{Proof stepsize bounds}
\begin{restatable}[Non-smooth bounds]{lem}{NSsandwichbounds}
\label{lemma:easy_bounds_NS}
Let $(c_k)_{k=0}^\infty$ be any non-decreasing positive sequence. Then, under DecSPS-NS, we have that for every $k\in\mathbb{N}$, $\frac{c_0\gamma_\ell}{c_k}\le\gamma_k\le \frac{c_0\gamma_b}{c_k}$, $\gamma_{k-1}\le\gamma_k$.
\end{restatable}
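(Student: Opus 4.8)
The plan is to mirror the induction argument used for the smooth bounds in Lemma~\ref{lemma:easy_bounds}, with the crucial difference that the lower bound on the adaptive term is now supplied \emph{by construction} rather than by smoothness. The main work is an induction on $k$ establishing the two-sided bound $\frac{c_0\gamma_\ell}{c_k}\le\gamma_k\le\frac{c_0\gamma_b}{c_k}$; the monotonicity then follows with a one-line argument. I do not expect a genuine obstacle here, since the explicit floor $c_0\gamma_\ell$ removes the need to invoke a smoothness-based lower bound on the Polyak ratio.

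First I would isolate the inner quantity. Writing $R_k:=\frac{f_{\SC_k}(x^k)-\ell^*_{\SC_k}}{\|g_{\SC_k}(x^k)\|^2}$, I note that $R_k\ge0$ because $\ell^*_{\SC_k}\le f^*_{\SC_k}\le f_{\SC_k}(x^k)$, so the factor $M_k:=\max\{c_0\gamma_\ell,\,R_k\}$ satisfies $M_k\ge c_0\gamma_\ell>0$ unconditionally. This is exactly the point where the explicit $\max$ plays the role that the smoothness bound $\frac{1}{2L_{\max}}$ (via Lemma~\ref{lemma:basic_L_sc}) played in the smooth case, and where it matters that the non-smooth setting offers no such automatic lower bound. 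With this notation the stepsize reads $\gamma_k=\frac{1}{c_k}\min\{M_k,\,c_{k-1}\gamma_{k-1}\}$.

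For the base case $k=0$, using $c_{-1}=c_0$ and $\gamma_{-1}=\gamma_b$ gives $\gamma_0=\frac{1}{c_0}\min\{M_0,\,c_0\gamma_b\}$. Since $M_0\ge c_0\gamma_\ell$ and $c_0\gamma_b\ge c_0\gamma_\ell$ (using $\gamma_b\ge\gamma_\ell$), both arguments of the $\min$ are at least $c_0\gamma_\ell$, whence $\gamma_0\ge\gamma_\ell=\frac{c_0\gamma_\ell}{c_0}$; the upper bound $\gamma_0\le\gamma_b$ is immediate from the $\min$ with $c_0\gamma_b$. For the inductive step, I would multiply the hypothesis $\frac{c_0\gamma_\ell}{c_k}\le\gamma_k\le\frac{c_0\gamma_b}{c_k}$ by $c_k$ to obtain $c_0\gamma_\ell\le c_k\gamma_k\le c_0\gamma_b$. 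Then combining $M_{k+1}\ge c_0\gamma_\ell$ with $c_k\gamma_k\ge c_0\gamma_\ell$ gives $\min\{M_{k+1},\,c_k\gamma_k\}\ge c_0\gamma_\ell$, hence $\gamma_{k+1}\ge\frac{c_0\gamma_\ell}{c_{k+1}}$; while $\min\{M_{k+1},\,c_k\gamma_k\}\le c_k\gamma_k\le c_0\gamma_b$ gives $\gamma_{k+1}\le\frac{c_0\gamma_b}{c_{k+1}}$, closing the induction.

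Finally, the monotonicity is immediate from the definition: $\gamma_k\le\frac{1}{c_k}\,c_{k-1}\gamma_{k-1}=\frac{c_{k-1}}{c_k}\gamma_{k-1}\le\gamma_{k-1}$, where the last inequality uses that $(c_k)_{k=0}^\infty$ is non-decreasing, so that $\gamma_k$ is non-increasing. The only point demanding care is the ordering $\gamma_b\ge\gamma_\ell$, which must hold so that neither the outer $\min$ with $c_0\gamma_b$ at the base case nor the propagated cap $c_k\gamma_k\le c_0\gamma_b$ ever cuts the stepsize below the intended floor $c_0\gamma_\ell/c_k$.
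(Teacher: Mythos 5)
Your proposal is correct and follows essentially the same route as the paper: an induction showing $c_0\gamma_\ell\le c_k\gamma_k\le c_0\gamma_b$, where the lower bound comes unconditionally from the explicit floor $c_0\gamma_\ell$ inside the $\max$ (the paper even abstracts the Polyak ratio to an arbitrary real $\alpha_k$, which your observation that $M_k\ge c_0\gamma_\ell$ regardless of the ratio's value already captures), followed by the one-line monotonicity argument. Note that you prove $\gamma_k\le\gamma_{k-1}$ (non-increasing), which is what the paper's own proof establishes and what is clearly intended, even though the lemma statement as written reads $\gamma_{k-1}\le\gamma_k$; your explicit flagging of the requirement $\gamma_\ell\le\gamma_b$ is also the correct reading of the (apparently typo'd) condition in the DecSPS-NS definition.
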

\begin{proof}
First, note that  $\gamma_k$ is trivially \textit{non-increasing} since $\gamma_k\le c_{k-1}\gamma_{k-1}/c_k$. Next, we prove the bounds on $\gamma_k$.

Without loss of generality, we can work with the simplified stepsize
\begin{equation}
    \gamma_k:=\frac{1}{c_k}\cdot\min\left\{\max\left\{c_0\gamma_{\ell}, \alpha_k \right\}, c_{k-1}\gamma_{k-1}\right\},
\end{equation}
where $\alpha_k\in\mathbb{R}$ is any number. We proceed by induction: at $k=0$~(base case) we get
\begin{equation}
    \gamma_k:=\frac{1}{c_0}\cdot\min\left\{\max\left\{c_0\gamma_{\ell}, \alpha \right\}, c_{0}\gamma_{b}\right\} = \min\left\{\max\left\{\gamma_{\ell}, \alpha_0/c_0 \right\}, \gamma_{b}\right\},
\end{equation}
if $\alpha_0/c_0\le \gamma_{\ell}$ then $\gamma_k = \min\left\{\gamma_\ell, \gamma_{b}\right\} = \gamma_\ell$. Otherwise $\alpha_0/c_0> \gamma_{\ell}$ and therefore $\gamma_k = \min\left\{\alpha_0/c_0, \gamma_{b}\right\}$. If in addition, if $\alpha_0/c_0\ge\gamma_{b}$ then $\gamma_0=\gamma_b$, else $\gamma_0 = \alpha_0/c_0\in[\gamma_\ell,\gamma_b]$. In all these cases, we get $\gamma_0\in[\gamma_\ell, \gamma_b]$; hence, the base case holds true.

We now proceed with the induction step by assuming $\frac{c_0\gamma_\ell}{c_k}\le\gamma_k\le \frac{c_0\gamma_\ell}{c_k}$. Using the definition of DecSPS-NS we will then show that the same inequalities hold for $\gamma_{k+1}$. We start by noting that, since $\gamma_k\in\left[\frac{c_0\gamma_\ell}{c_k},\frac{c_0\gamma_b}{c_k}\right]$, it holds that,
\begin{equation}
    \gamma_{k+1}:=\frac{1}{c_{k+1}}\cdot\min\left\{\max\left\{c_{0}\gamma_{\ell}, \alpha_{k+1} \right\}, c_{k}\gamma_{k}\right\} = \frac{1}{c_{k+1}}\cdot\min\left\{\max\left\{c_{0}\gamma_{\ell}, \alpha_{k+1} \right\}, c_0\iota\right\}, \quad \iota \in [\gamma_\ell, \gamma_b].
\end{equation}
Similarly to the base case, we can write:
\begin{equation}
     \gamma_{k+1} = \frac{c_0}{c_{k+1}}\cdot\min\left\{\max\left\{\gamma_{\ell}, \alpha_{k+1}/c_0 \right\}, \iota\right\}.
\end{equation}
With a procedure identical to the setting $k=0$ we get that $\min\left\{\max\left\{\gamma_{\ell}, \alpha_{k+1}/c_0 \right\}, \iota\right\}\in[\gamma_\ell,\iota]\subseteq[\gamma_\ell,\gamma_b]$. This concludes the proof.
\end{proof}

\subsection{Proof of Thm.~\ref{thm:SPS_bounded_domain_cvx_NS}}
\decSPScvxNS*
\begin{proof}
Let us consider the DecSPS stepsize in the non-smooth setting. Using convexity and the gradient bound we get
\begin{align}
\|x^{k+1}-x^*\|^2 
&=\|x^k-\gamma_k g_{S_k}-x^*\|^2\\
&= \|x^{k}-x^*\|^2 - 2 \gamma_k \langle g_{S_k},  x^k-x^* \rangle +  \gamma_k^2 \|g_{S_k}\|^2\\
&\leq \|x^{k}-x^*\|^2 - 2 \gamma_k [f_{\SC_k}(x^k)- f_{\SC_k}(x^*)] +  \gamma_k  \frac{c_0 \gamma_b G^2}{c_k},
\end{align}
where the last line follows from definition of subgradient and Lemma~\ref{lemma:easy_bounds_NS}.

By dividing by $\gamma_k>0$,
\begin{equation}
\frac{\|x^{k+1}-x^*\|^2}{\gamma_k} \leq \frac{\|x^{k}-x^*\|^2}{\gamma_k} - 2[f_{\SC_k}(x^k)- f_{\SC_k}(x^*)]+  \frac{1}{c_k} c_0 \gamma_b G^2.
\end{equation}
Using the same exact steps as Thm~\ref{thm:SPS_bounded_domain_cvx}, and using the fact that $\gamma_k$ is decreasing, we arrive at the equation
\begin{equation}
 \frac{1}{K}\sum_{k=0}^{K-1} f_{\SC_k}(x^k)- f_{\SC_k}(x^*)\leq \frac{D^2}{K\gamma_{K-1}} +  \sum_{k=0}^{K-1}\frac{c_0 \gamma_b G^2}{K c_k} .
\end{equation}
Now we use the fact that, since
$\frac{c_0\gamma_\ell}{c_k}\le\gamma_k$ by Lemma~\ref{lemma:easy_bounds_NS}, we have

\begin{equation}
 \frac{1}{K}\sum_{k=0}^{K-1} f_{\SC_k}(x^k)- f_{\SC_k}(x^*)\leq \frac{c_K D^2}{\gamma_\ell c_0 K } + \frac{1}{K} \sum_{k=0}^{K-1}\frac{c_0 \gamma_b G^2}{c_k}.
\end{equation}
We conclude by taking the expectation and using Jensen's inequality.
\end{proof}

\begin{restatable}[]{cor}{CorSqrtNS}
\label{cor:sqrt_NS}
In the setting of Thm.~\ref{thm:SPS_bounded_domain_cvx_NS}, if $c_k=c_0\sqrt{k+1}$ we have 
\begin{equation}
    \Exp[f(\bar{x}^K)-f(x^*)] \leq \frac{D^2/\gamma_\ell+2\gamma_b G^2}{\sqrt{K}}.
\end{equation}
\end{restatable}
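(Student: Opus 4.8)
The plan is to obtain Corollary~\ref{cor:sqrt_NS} by simply specializing the general bound of Theorem~\ref{thm:SPS_bounded_domain_cvx_NS} to the choice $c_k = c_0\sqrt{k+1}$ and then controlling the two resulting terms separately. First I would record that this choice gives $c_{K-1} = c_0\sqrt{(K-1)+1} = c_0\sqrt{K}$, which is the only value of the sequence that enters the deterministic (first) term.

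For the first term, substituting $c_{K-1} = c_0\sqrt{K}$ into $\frac{c_{K-1}D^2}{\gamma_\ell c_0 K}$ causes the factor $c_0$ to cancel, leaving $\frac{\sqrt{K}\,D^2}{\gamma_\ell K} = \frac{D^2}{\gamma_\ell\sqrt{K}}$. For the second (stochastic) term, substituting $c_k = c_0\sqrt{k+1}$ into $\frac{1}{K}\sum_{k=0}^{K-1}\frac{c_0\gamma_b G^2}{c_k}$ again cancels $c_0$ and yields $\frac{\gamma_b G^2}{K}\sum_{k=0}^{K-1}\frac{1}{\sqrt{k+1}}$. I would then invoke the standard harmonic-type estimate $\sum_{k=0}^{K-1}\frac{1}{\sqrt{k+1}}=\sum_{j=1}^{K}\frac{1}{\sqrt{j}}\le 2\sqrt{K}$ (the same bound already used after Thm.~\ref{thm:SPS_bounded_domain_cvx} in the main text), which follows from comparing the decreasing summand to the integral $\int_0^K x^{-1/2}\,dx = 2\sqrt{K}$. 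This turns the second term into $\frac{\gamma_b G^2}{K}\cdot 2\sqrt{K} = \frac{2\gamma_b G^2}{\sqrt{K}}$.

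Adding the two contributions gives $\frac{D^2}{\gamma_\ell\sqrt{K}} + \frac{2\gamma_b G^2}{\sqrt{K}} = \frac{D^2/\gamma_\ell + 2\gamma_b G^2}{\sqrt{K}}$, which is exactly the claimed rate. There is no genuine obstacle here: the corollary is a direct substitution, and the only nontrivial ingredient is the elementary sum bound $\sum_{j=1}^{K}j^{-1/2}\le 2\sqrt{K}$, which is where the $\mathcal{O}(1/\sqrt{K})$ trade-off between the deterministic and stochastic terms becomes visible. I would state that bound explicitly (with a one-line integral-comparison justification) to keep the argument self-contained, and otherwise the proof reduces to arithmetic.
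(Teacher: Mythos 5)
Your proposal is correct and is exactly the intended derivation: the paper states Cor.~\ref{cor:sqrt_NS} as a direct consequence of Thm.~\ref{thm:SPS_bounded_domain_cvx_NS} obtained by substituting $c_k=c_0\sqrt{k+1}$ (so that $c_0$ cancels in both terms) and applying the same bound $\sum_{k=0}^{K-1}(k+1)^{-1/2}\le 2\sqrt{K}$ already used for Cor.~\ref{cor:sqrt_smooth}. No gaps.
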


\begin{remark}
The bound in Cor.~\ref{cor:sqrt_NS} does not depend on $\sigma^2_{B}$, while the one in Cor.~\ref{cor:sqrt_smooth} does. This is because the proof is different, and does not rely on bounding squared gradients with function suboptimalities~(one cannot, if smoothness does not hold). Similarly, usual bounds for non-smooth optimization do not depend on subgradient variance but instead on $G$~\citep{nemirovski2009robust,duchi2011adaptive,ene2020adaptive}.
\end{remark}
\newpage
\section{Further experimental results}
\label{app:exp}
\begin{itemize}[leftmargin=*]
  \setlength\itemsep{0.01em}
    \item \textit{Synthetic Dataset} : Following~\citep{gower2019sgd} we generate $n = 500$ datapoints from a standardized Gaussian distribution in $\R^d$, with $d=100$. We sample the corresponding labels at random. We consider a batch size $B=20$ and either $\lambda=0$ or $\lambda = 1e-4$.
    \item \textit{A1A dataset}~(standard normalization) from~\citep{chang2011libsvm}, consisting in $1605$ datapoints in 123 dimensions. We consider again $B=20$ but a substantial regularization with $\lambda = 0.01$. 
    \item \textit{Breast Cancer dataset}~(standard normalization)~\citep{Dua:2019}, consisting in $569$ datapoints in 39 dimensions. We consider a small batch size $B=5$ with strong regularization $\lambda = 0.1$.
\end{itemize}

All experiments reported below are repeated 5 times. Shown is mean and 2 standard deviations.

\paragraph{Tuning of DecSPS.} DecSPS has two hyperparameters: the upper bound $\gamma_b$ on the first stepsize and the scaling constant $c_0$. As stated in the main paper, while Thm.~\ref{thm:SPS_bounded_domain_cvx_NS} guarantees convergence for any positive value of these hyperparameters, the result of Thm.~\ref{thm:SPS_bounded_domain_cvx} suggests that using $c_0=1$ yields the best performance under the assumption that $\hat \sigma^2_B\ll \tilde L D^2$~(e.g. reasonable distance of initialization from the solution, and the maximum gradient Lipschitz constant $L_{\max}=\max_i L_i> 1/\gamma_b$). For the definition of these quantities plese refer to the main paper. In Fig.~\ref{fig:sensitivity} in the main paper we showed that (1) $c_0=1$ is optimal in this setting~(under $\gamma_b=10)$ and (2) the performance of SGD with DecSPS is almost independent of $\gamma_b$ at $c_0=1$. Similar findings hold for the A1A and Breast Cancer datasets, as shown in Figure~\ref{fig:A1A_Breast_tuning}. For A1A, we can see that the dynamics is almost independent of $\gamma_b$ at $c_0=1$ and that, at $\gamma_b=10$, $c_0=1$ indeed yields the best performance. The findings are similar for the Breast Cancer dataset; however, there we see that at $\gamma_b=10$, $c_0=5$ yields the best final suboptimality --- yet $c_0=1$ is clearly the best tradeoff between convergence speed and final accuracy.

\begin{figure}[ht]
    \centering
    \textbf{\scriptsize\quad \quad A1A - Sensitivity $\bf{\gamma_b}$\qquad\quad\quad \quad A1A - Sensitivity $\bf{c_0}$\qquad\quad\quad \quad Breast - Sensitivity $\bf{\gamma_b}$\qquad\quad\quad\quad Breast - Sensitivity $\bf{c_0}$\qquad\quad}\\
    \includegraphics[height = 0.26\textwidth]{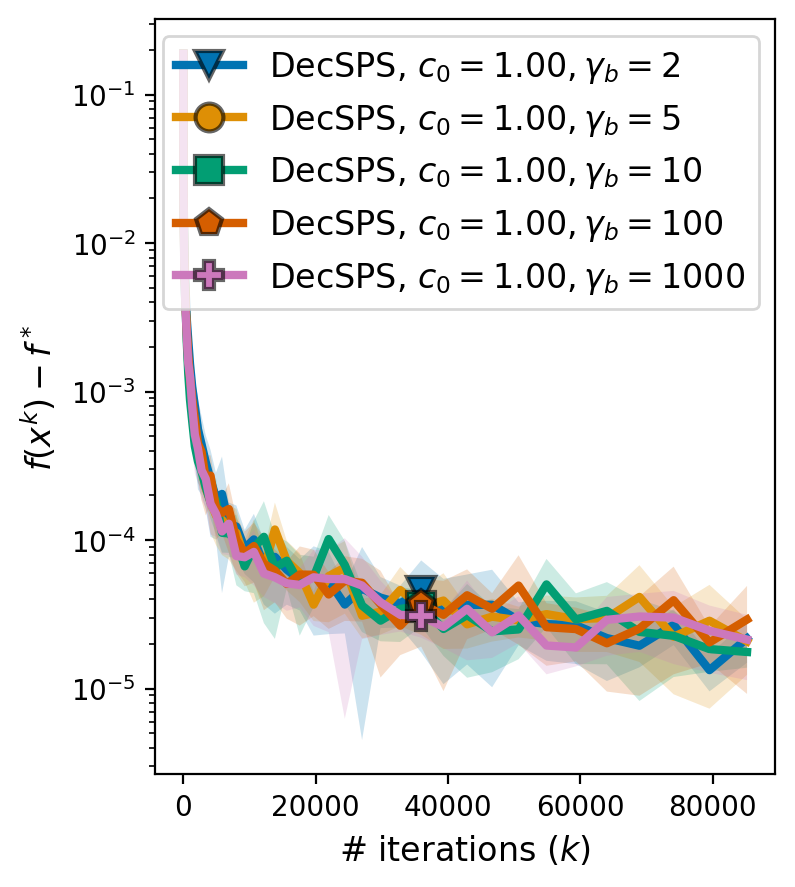}
    \includegraphics[height = 0.26\textwidth]{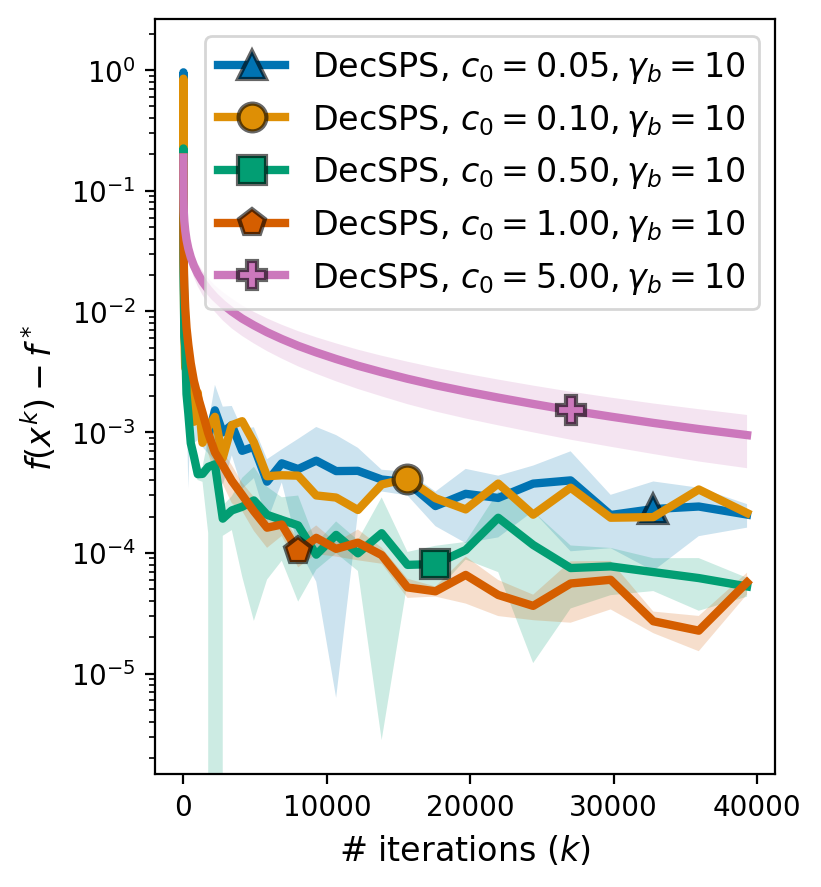}
    \includegraphics[height = 0.26\textwidth]{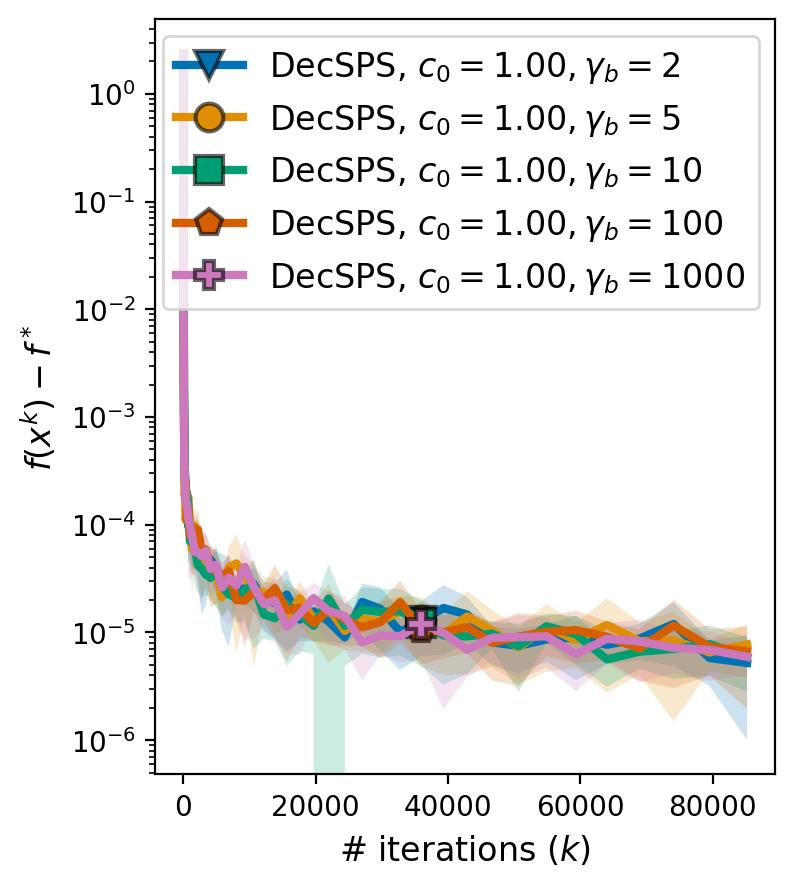}
    \includegraphics[height = 0.26\textwidth]{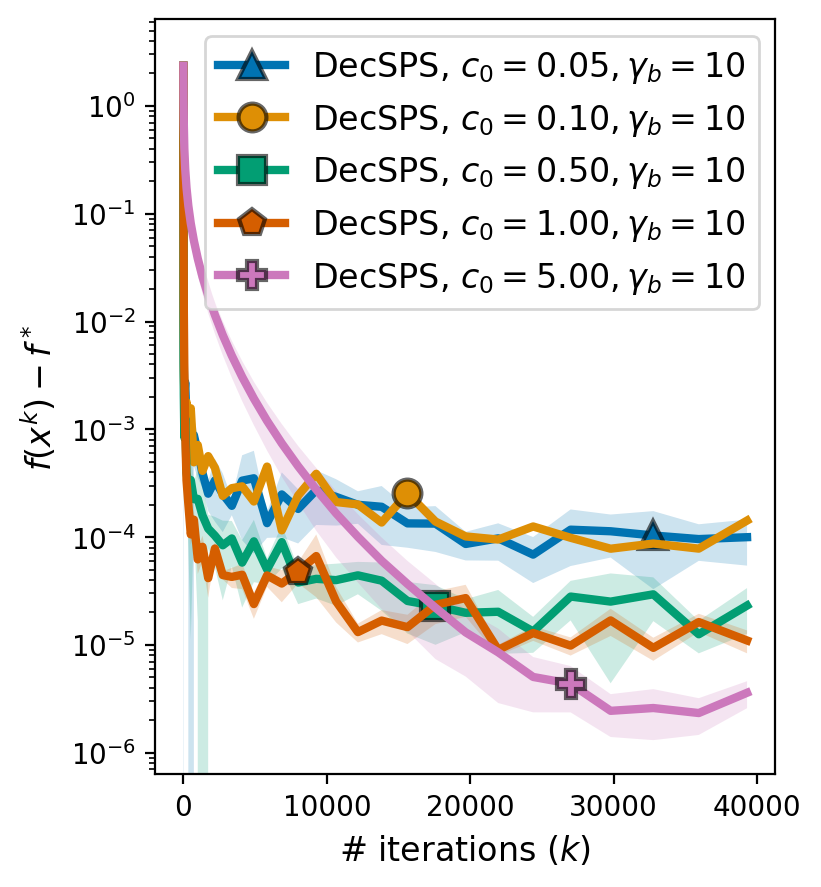}
    \vspace{-2mm}
    \caption{\small Tuning of DecSPS on the A1A and Breast cancer datasets.}
    \label{fig:A1A_Breast_tuning}
\end{figure}

\paragraph{Comparison with SGD.} In addition to Figure~\ref{Fig_SGDVsDecSPS_A1A}~(A1A dataset), in Figure~\ref{Fig_SGDVsDecSPS_app} we provide comparison of DecSPS with SGD with stepsize $\gamma_0/\sqrt{k+1}$ for the Synthetic and Breast Cancer datasets. From the results, it is clear that DecSPS with standard parameters $c_0=1, \gamma_b = 10$~(see discussion in main paper and paragraph above) is comparable if not faster than vanilla SGD with decreasing stepsize.

\begin{figure}[ht]
    \centering
\textbf{\scriptsize Synthetic Logistic Regression -- SGD Vs DecSPS \quad \quad  \quad \quad \quad  \quad  Regularized Breast Cancer -- SGD Vs DecSPS}\\
    \includegraphics[height=0.26\linewidth]{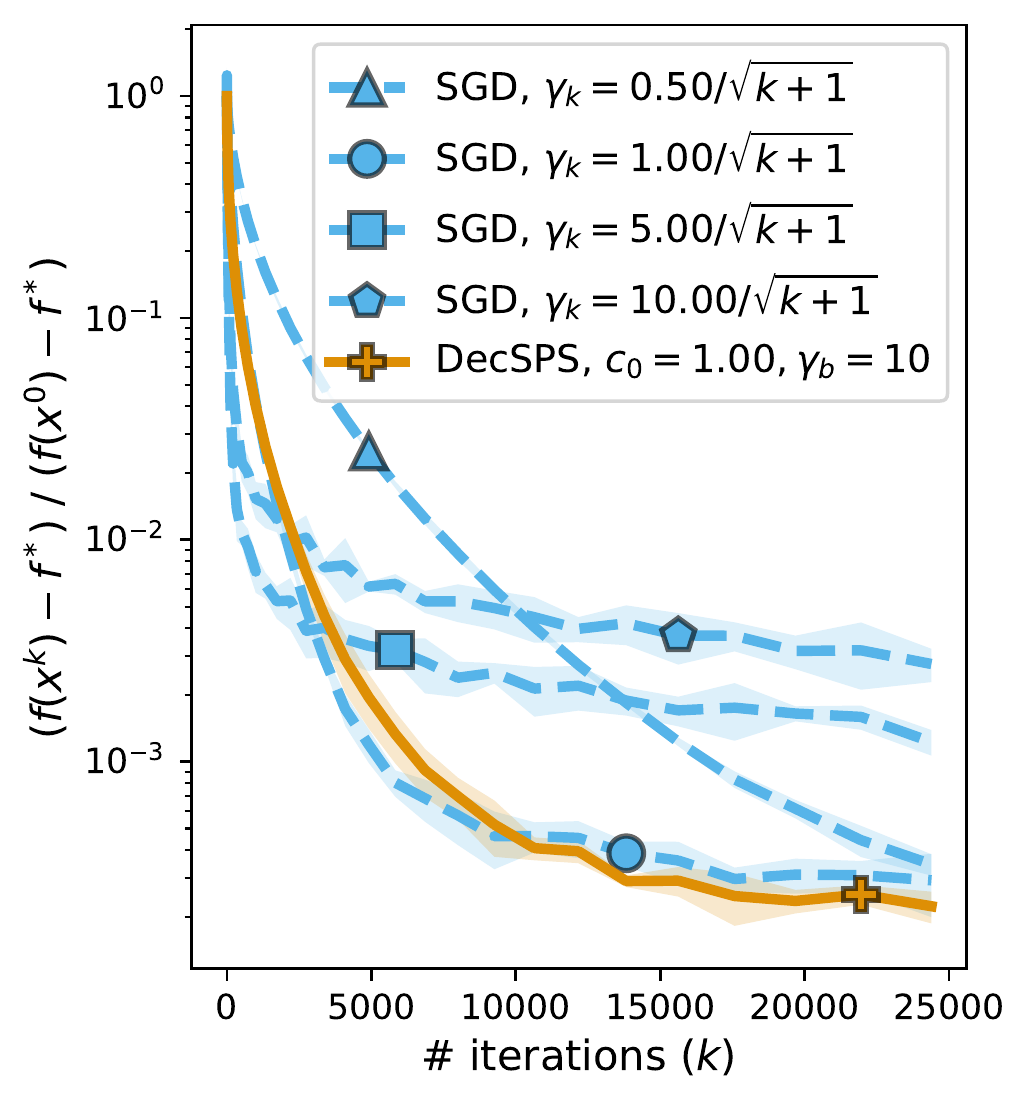}
    \includegraphics[height=0.26\linewidth]{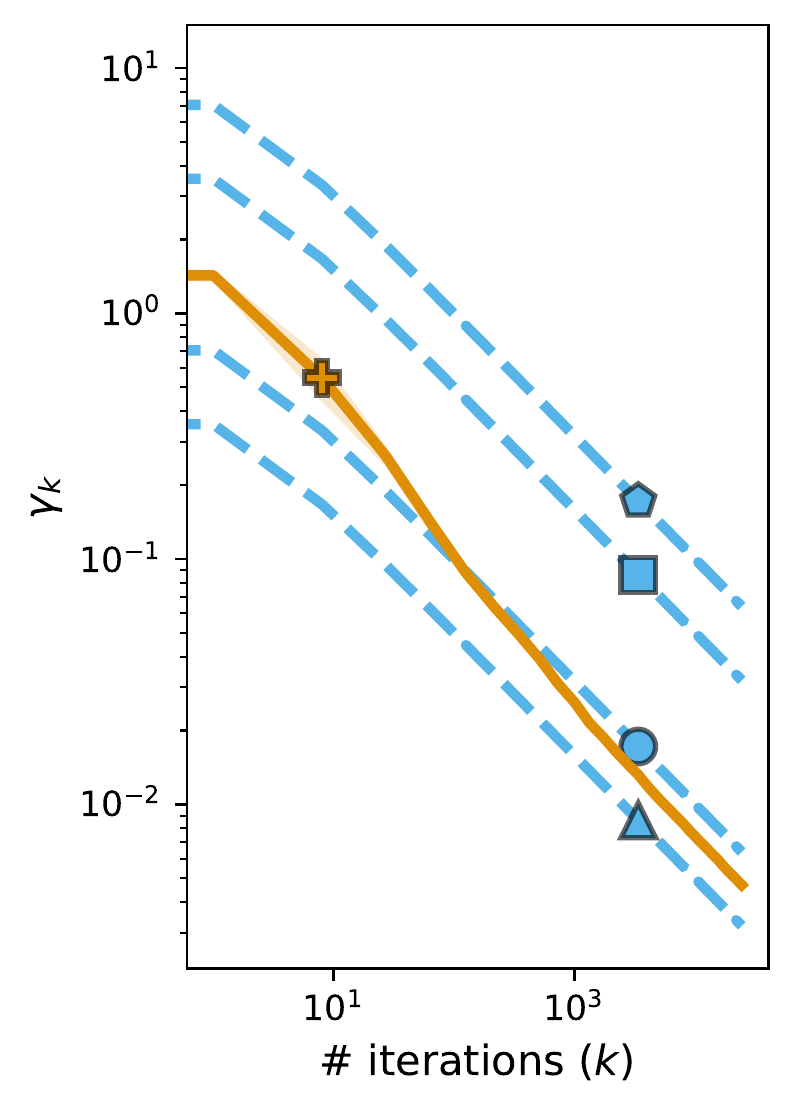}
    \includegraphics[height=0.26\linewidth]{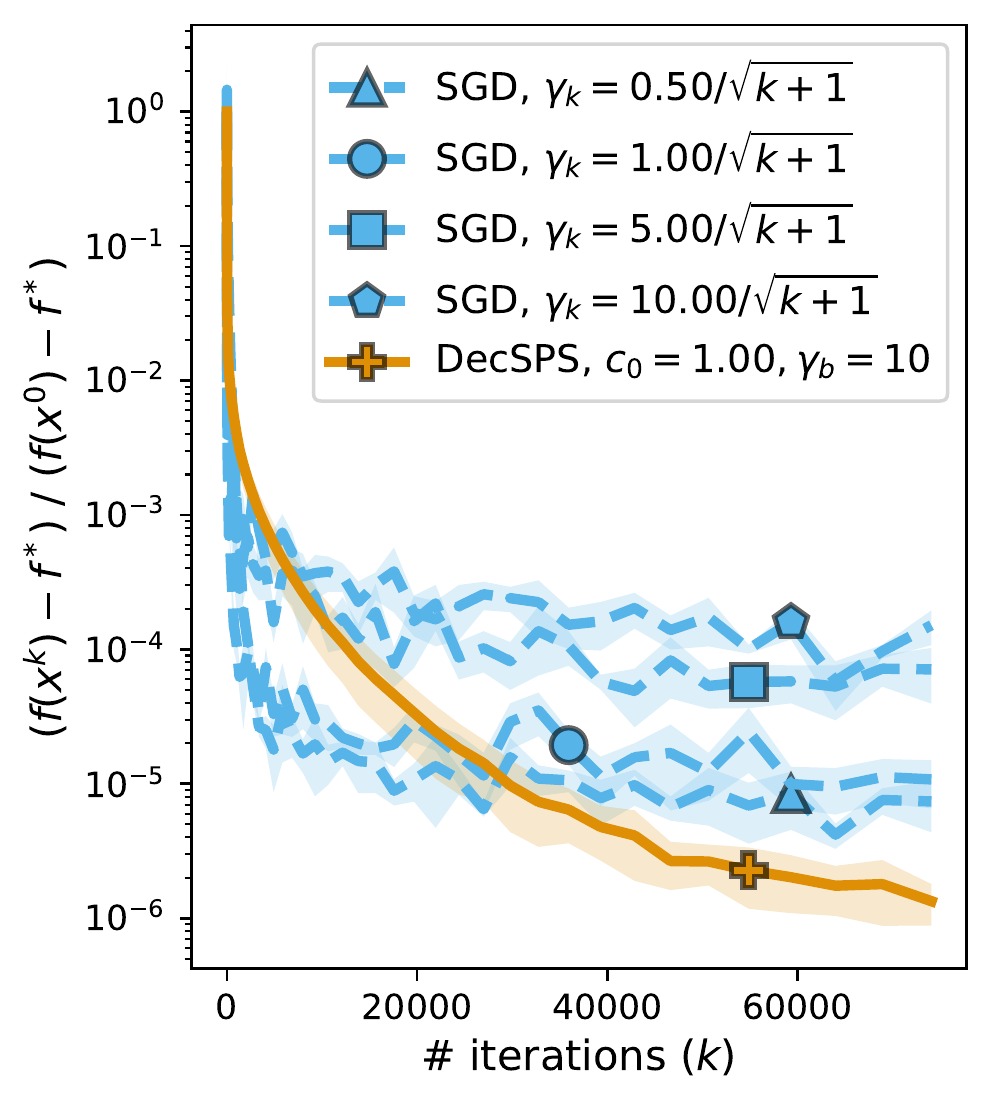}
    \includegraphics[height=0.26\linewidth]{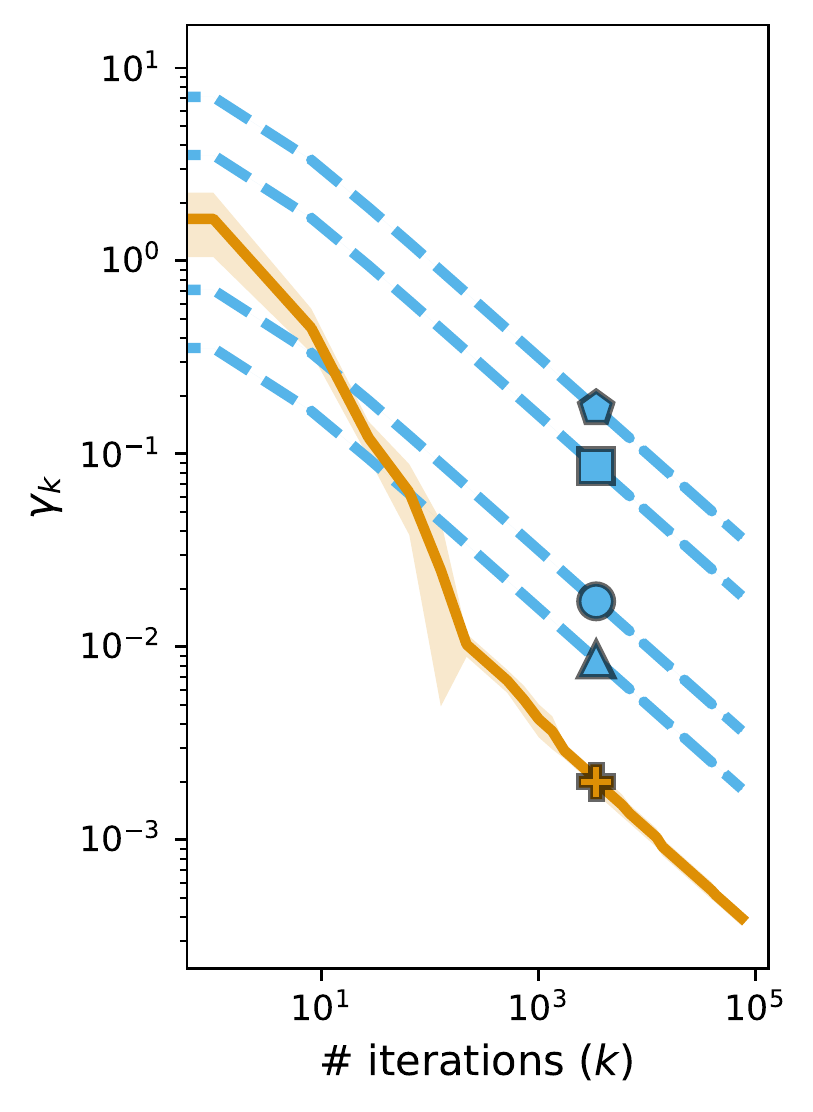}
    \vspace{-2mm}
    \caption{\small DecSPS on the Synthetic Dataset~($\lambda = 1e-4$) and the Breast Cancer Dataset~($\lambda = 1e-1$) .}
    \label{Fig_SGDVsDecSPS_app}
\end{figure}

\paragraph{Comparison with Adagrad-Norm.} In addition to Figure~\ref{Fig_SGDVsDecSPS_A1A}~(Breast Cancer dataset), in Figures~\ref{fig:synthetic_ada_tuning_1}\&\ref{fig:synthetic_ada_tuning_2}\&\ref{Fig_SGDVsADA_sklearn} we provide comparison of DecSPS with AdaGrad-Norm~\cite{ward2019adagrad} for the Synthetic and A1A datasets. AdaGrad-norm at each iteration updates the scalar $b_{k+1}^2=b_{k}^2+\|\nabla f_{\SC_k}(x_k)\|^2$ and then selects the next step as $x_{k+1}=x_{k}-\frac{\eta}{b_{k+1}}\nabla f_i(x_k)$.  Hence, it has tuning parameters $b_0$ and $\eta$; $b_0=0.1$ is recommended in~\citep{ward2019adagrad}~(see their Figure 3). Using this value for $b_0$ we show in Figure~\ref{Fig_SGDVsADA_sklearn} that the performance of DecSPS is competitive against a well-tuned value of the AdaGrad-norm stepsize $\eta$. In Figure~\ref{fig:synthetic_ada_tuning_1}\&\ref{fig:synthetic_ada_tuning_2} we show the effect of tuning $b_0$ on the synthetic dataset: no major improvement is observed.

\begin{figure}[ht]
    \centering
\textbf{\scriptsize \qquad \qquad Regularized A1A -- DecSPS Vs AdaGrad-Norm \qquad \qquad  Synthetic Dataset -- DecSPS Vs AdaGrad-Norm}\\
    \includegraphics[height=0.26\linewidth]{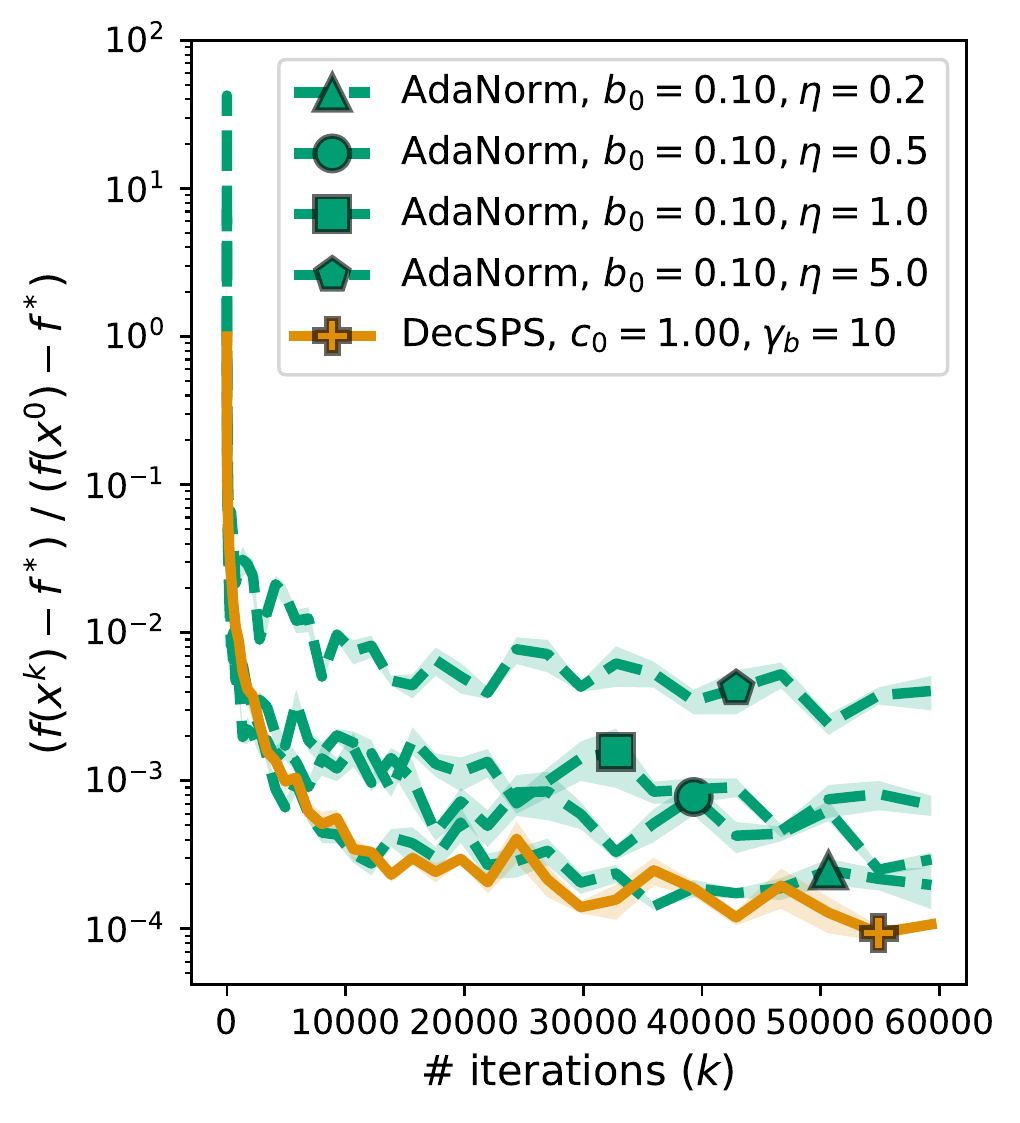}
    \includegraphics[height=0.26\linewidth]{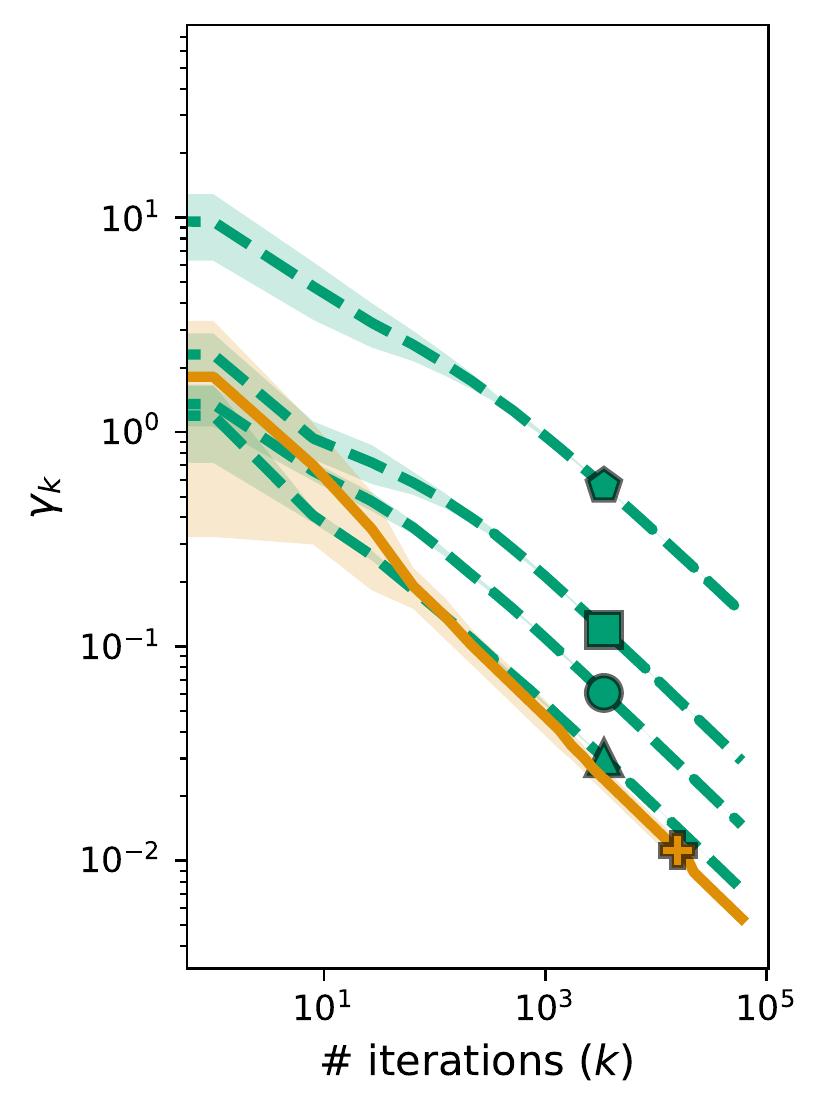}
\includegraphics[height=0.26\linewidth]{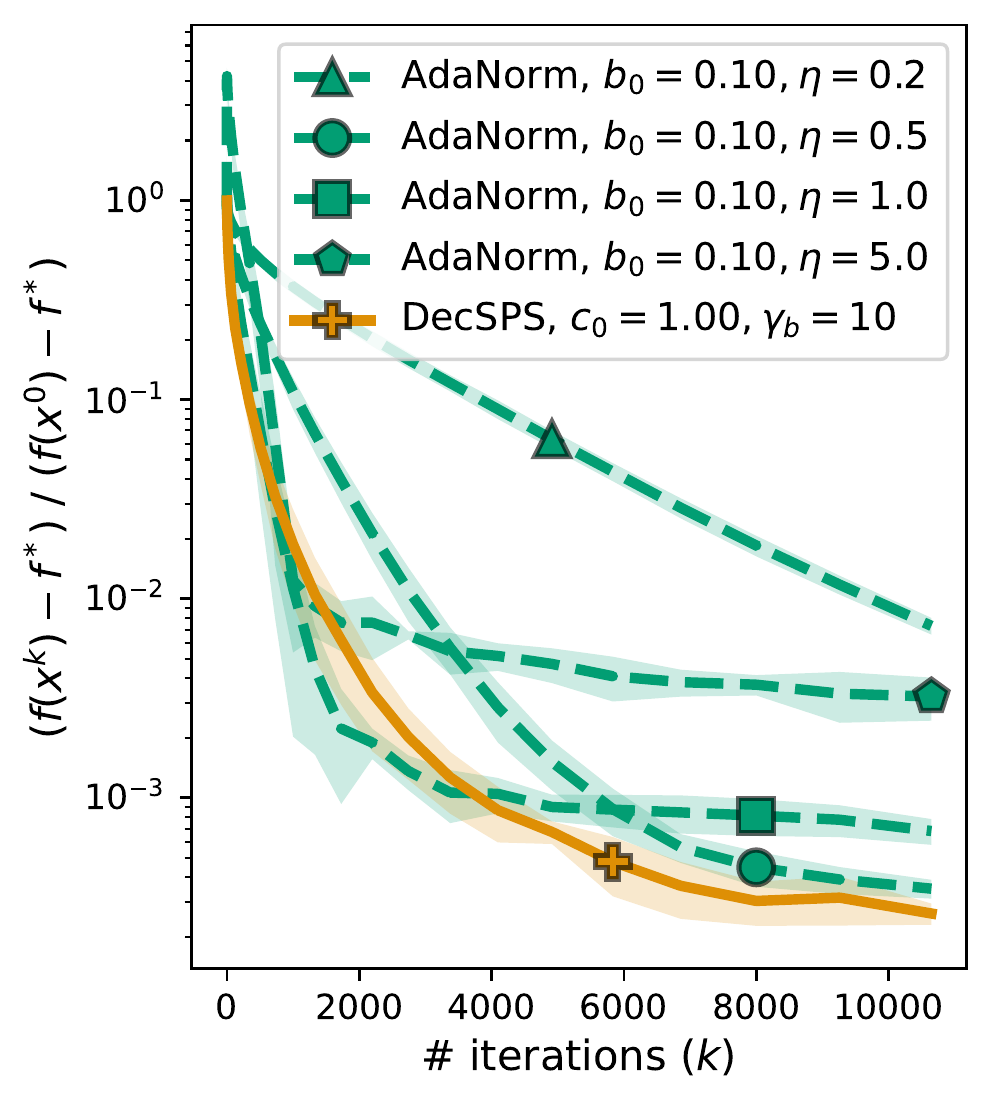}
    \includegraphics[height=0.26\linewidth]{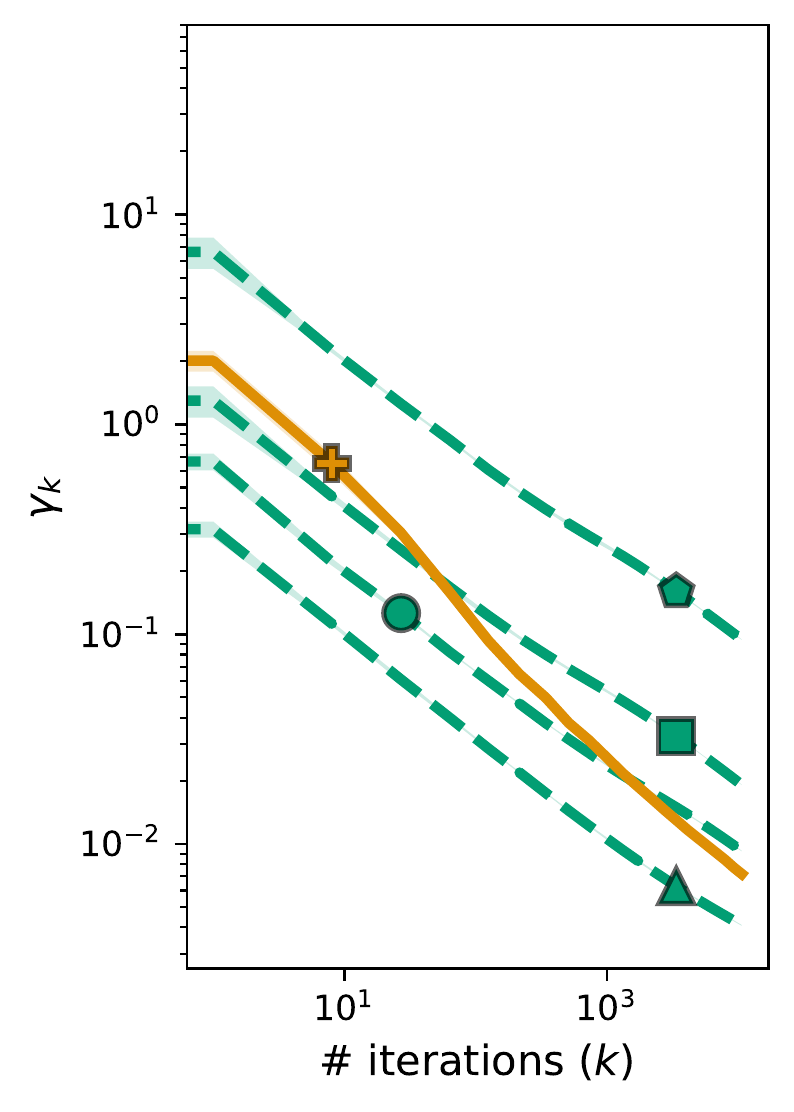}
    \vspace{-2mm}
    \caption{\small Performance of AdaGrad-Norm compared to DecSPS on the synthetic and A1A datasets. This figure is a complement to Figure~\ref{Fig_SGDVsDecSPS_A1A}.}
    \label{Fig_SGDVsADA_sklearn}
\end{figure}

\begin{figure}[ht]
    \centering
    \textbf{\scriptsize\quad \quad Synthetic Logistic Regression, $\boldsymbol{b_0 = 0.05}$\qquad\quad\quad \quad\quad  Synthetic Logistic Regression, $\boldsymbol{b_0 = 0.5}$}\\
    \includegraphics[height = 0.26\textwidth]{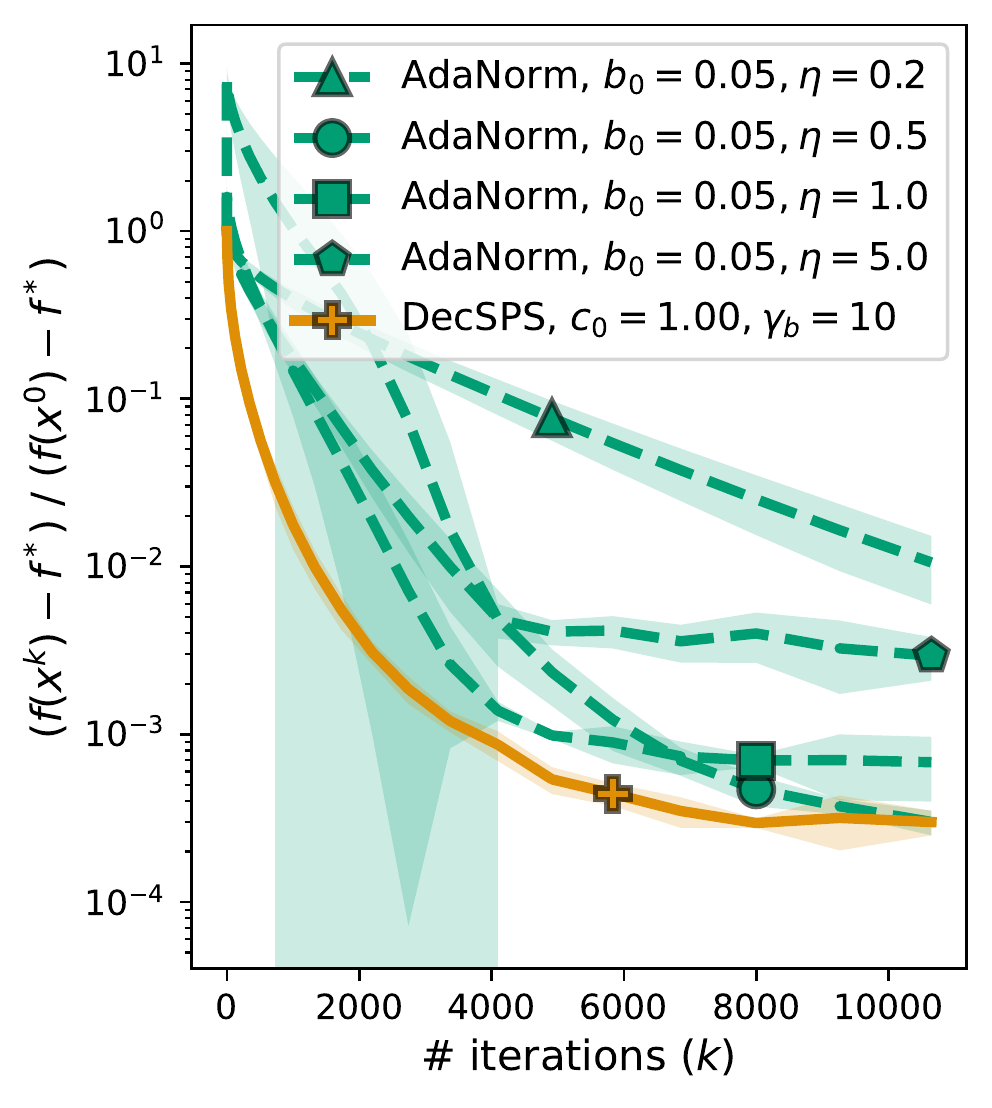}
    \includegraphics[height = 0.26\textwidth]{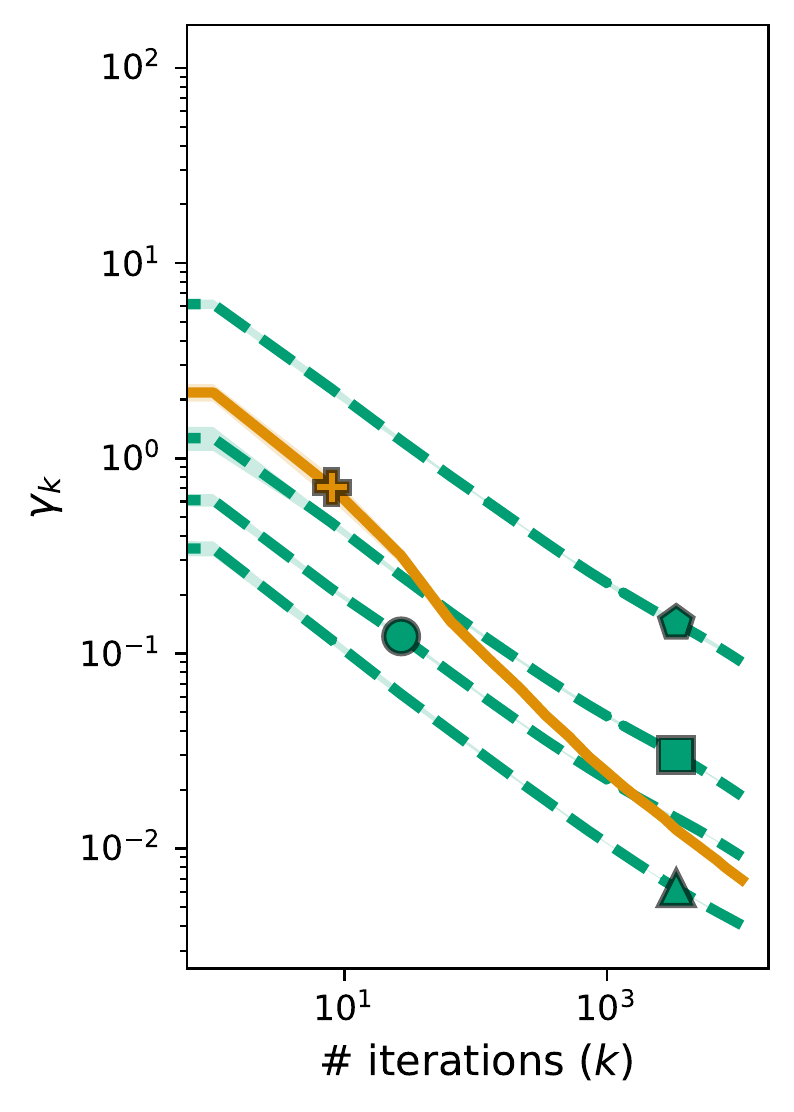}
    \includegraphics[height = 0.26\textwidth]{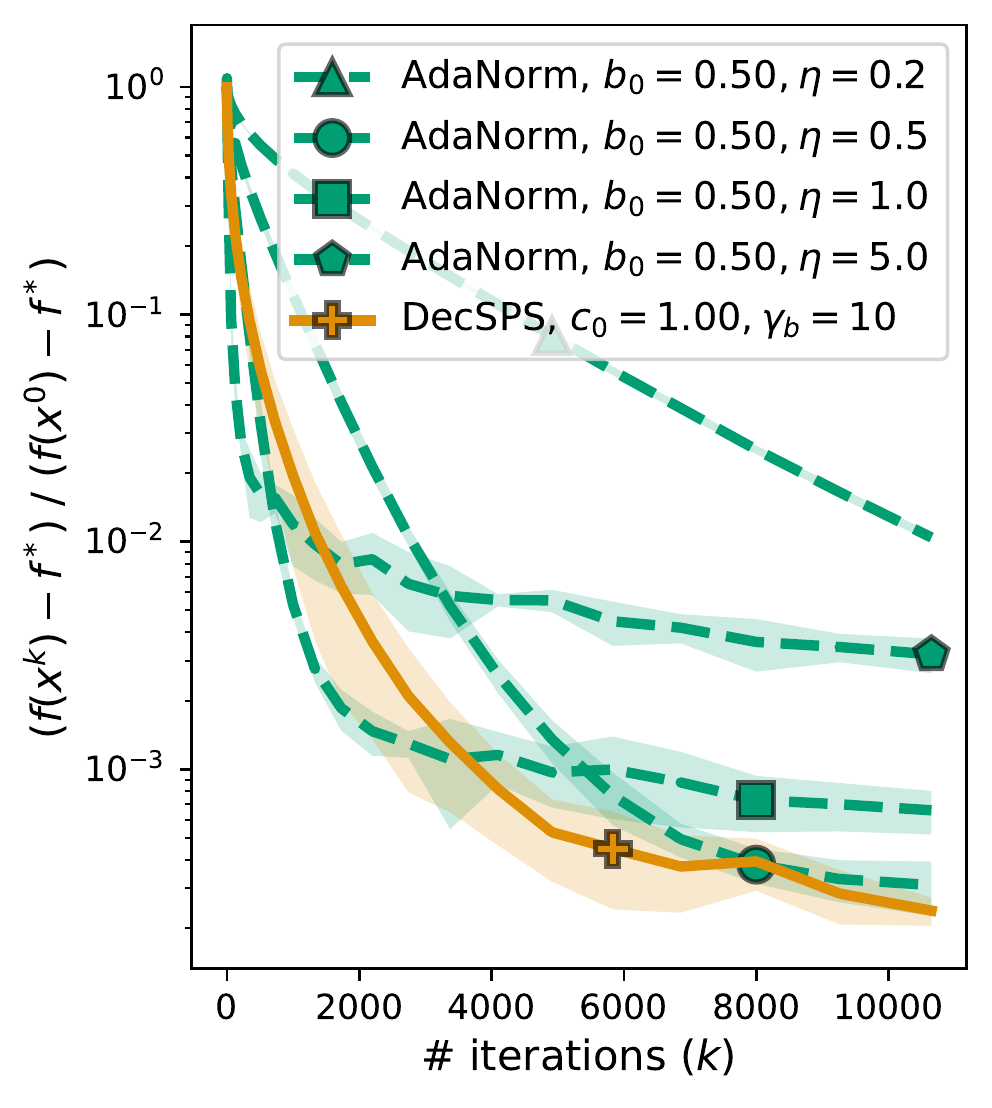}
    \includegraphics[height = 0.26\textwidth]{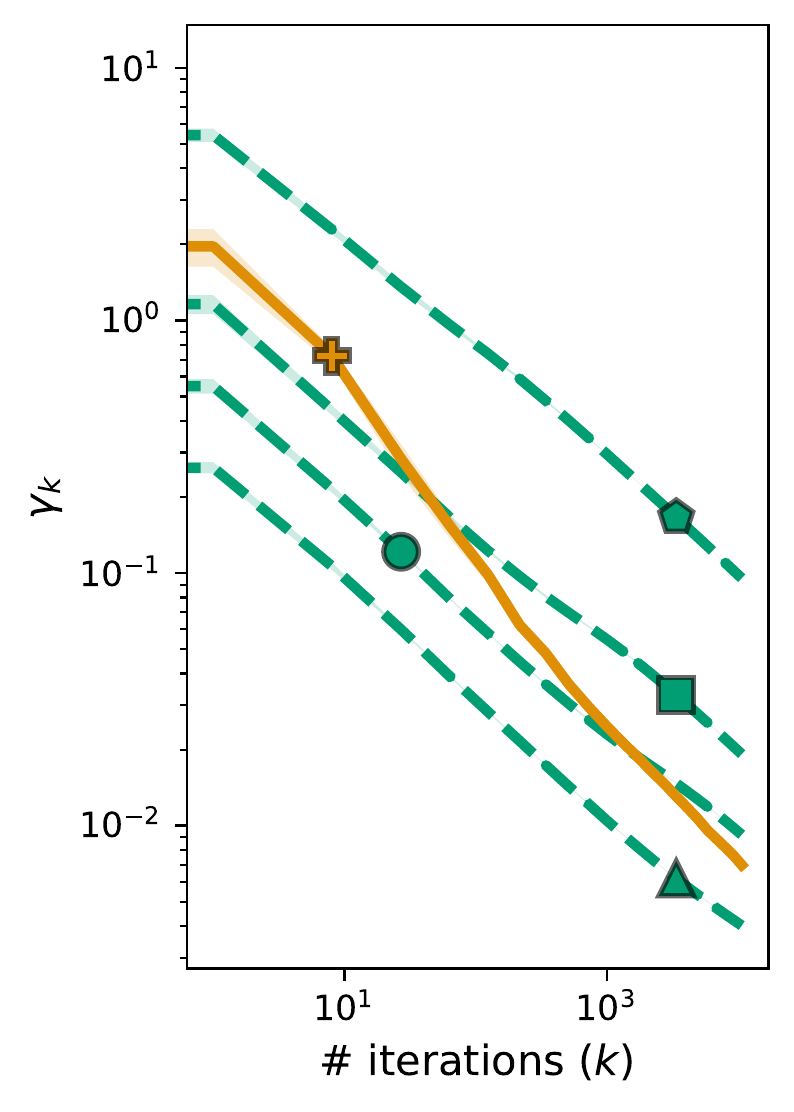}
    \vspace{-2mm}
    \caption{\small Performance of AdaGrad-Norm compared to DecSPS on the Synthetic dataset, for $b_0\ne 0.1$. This figure is a complement to Figure~\ref{Fig_SGDVsDecSPS_A1A}.}
    \label{fig:synthetic_ada_tuning_1}
\end{figure}

\begin{figure}[ht]
     \centering
    \textbf{\scriptsize\quad \quad \quad \quad Regularized A1A, $\boldsymbol{b_0 = 0.5, 0.05}$\qquad\quad\quad \quad\quad  \quad\quad Regularized Breast Cancer , $\boldsymbol{b_0 = 0.5, 0.05}$}\\
    \includegraphics[width = 0.22\textwidth]{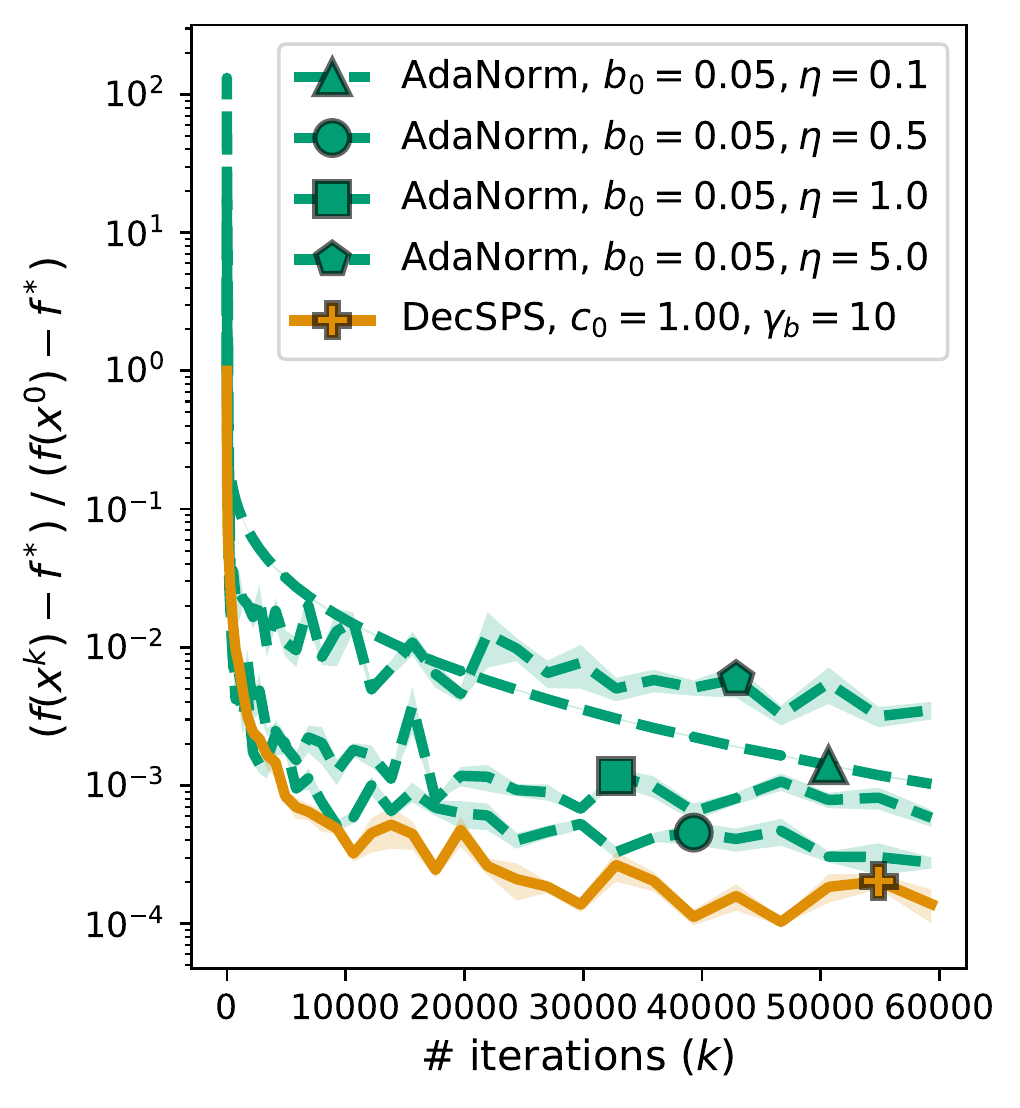}
     \includegraphics[width = 0.22\textwidth]{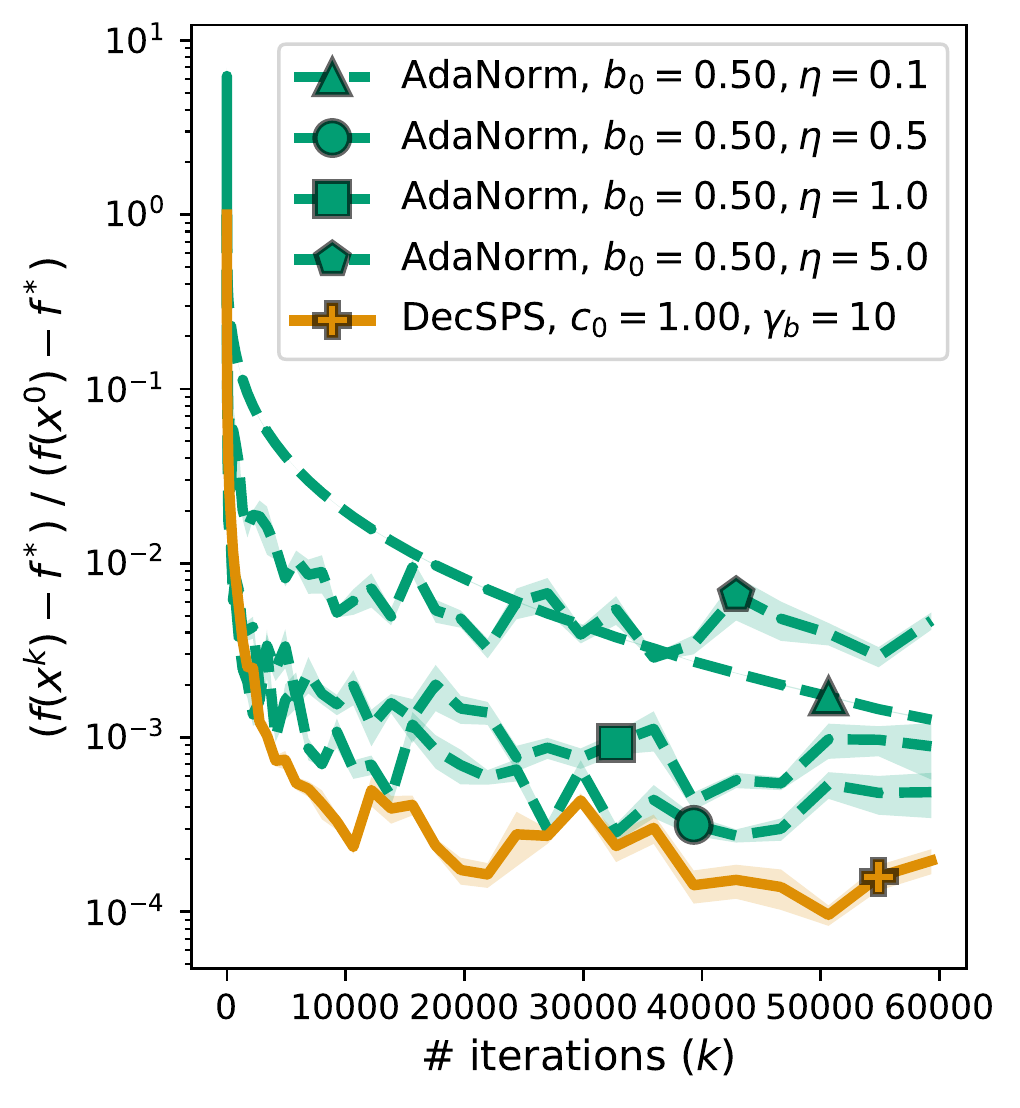}
     \includegraphics[width = 0.22\textwidth]{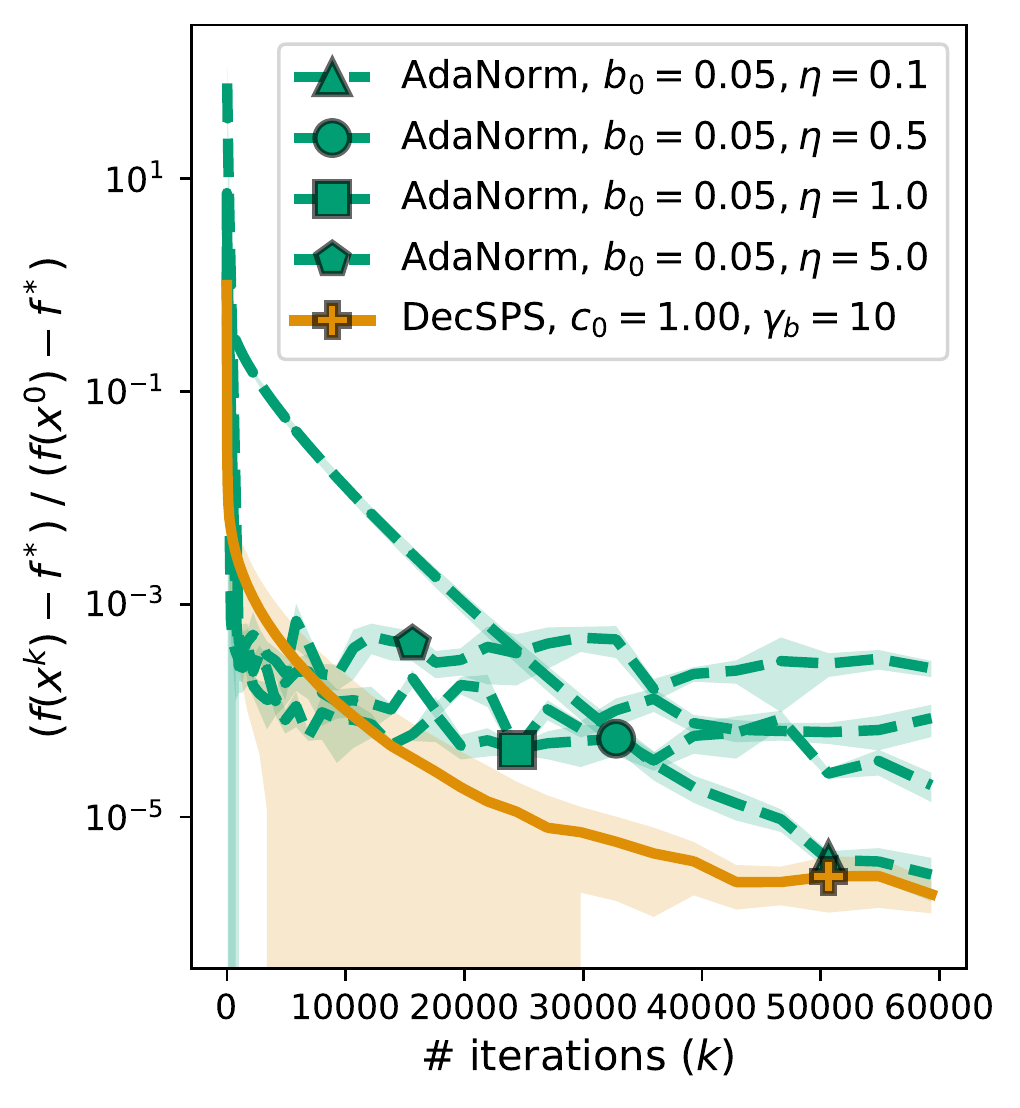}
     \includegraphics[width = 0.22\textwidth]{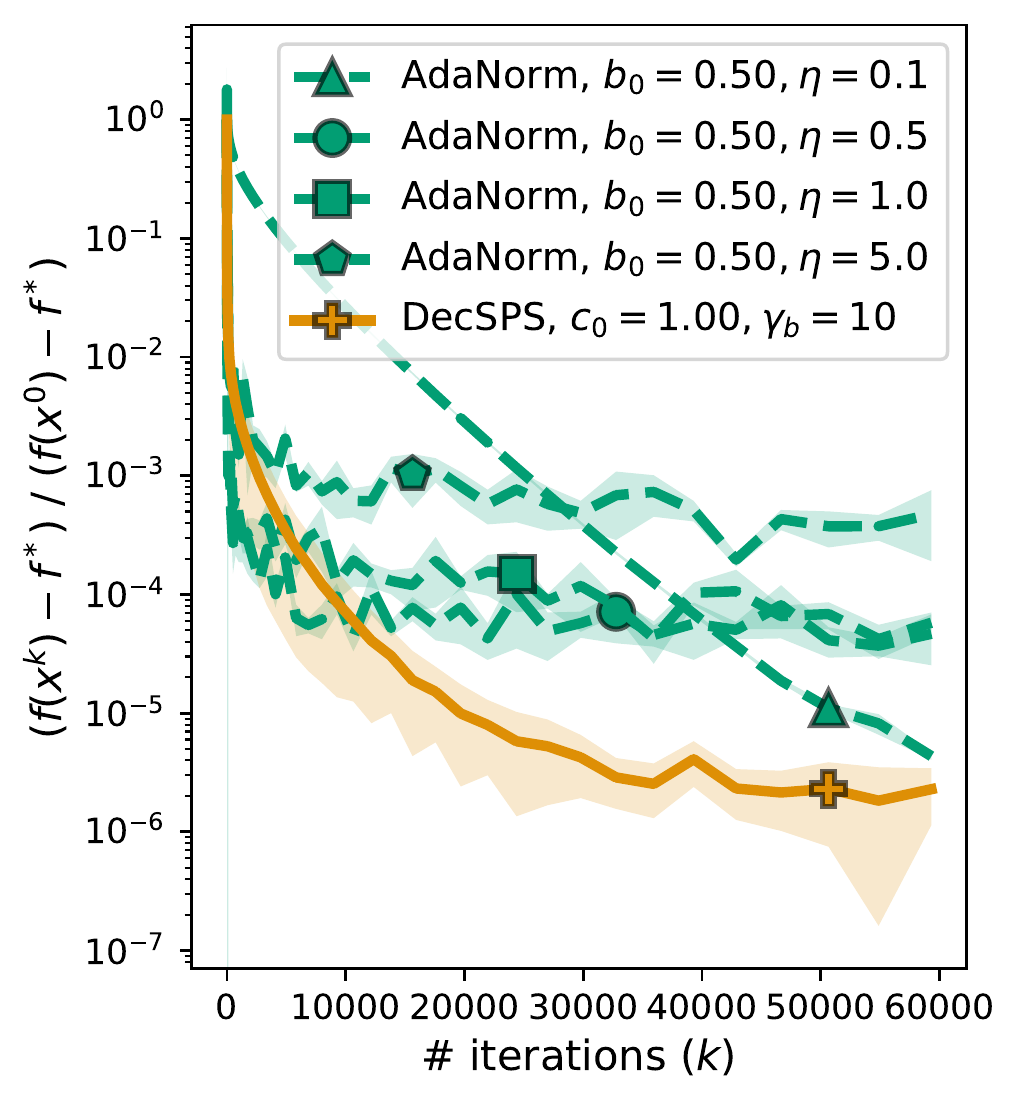}
     \caption{\small Performance of AdaGrad-Norm compared to DecSPS on A1A and Breast Cancer datasets, for $b_0\ne 0.1$. This figure is a complement to Figure~\ref{Fig_SGDVsDecSPS_A1A}.}
    \label{fig:synthetic_ada_tuning_2}
 \end{figure}

\paragraph{Comparison with Adam and AMSgrad.} We provide a comparison with Adam~\cite{kingma2014adam} and AMSgrad~\cite{reddi2019convergence}. For both methods, we set the momentum parameter to zero~(a.k.a RMSprop) for a fair comparison with DecSPS. For $\beta :=\beta_2$, the parameter that controls the moving average of the second moments, we select the value $0.99$ since we found that the standard $0.999$ leads to problematic~(exploding) stepsizes. Findings are pretty similar for both the A1A~(Figure~\ref{Fig_SGDVsAdam_A1A}) and Breast Cancer~(Figure~\ref{Fig_SGDVsAdam_Breast}) datasets: when compared to DecSPS with the usual parameters, fine-tuned Adam with fixed stepsize can reach the same performance after a few tens of thousand iterations --- however, it is much slower at the beginning of training. While deriving convergence guarantees for Adam is problematic~\cite{reddi2019convergence}, AMSgrad~\cite{reddi2019convergence} with stepsize $\eta/\sqrt{k+1}$ enjoys a convergence guarantee similar to Adagrad and Adagrad-Norm. This is reflected in the empirical convergence: fine-tuned AMSgrad is able to match the convergence of DecSPS with the usual parameters motivated at the beginning of this section. Yet, we recall that the convergence guarantees of AMSgrad require the iterates to live in a bounded domain, an assumption which is not needed in our DecSPS~(see \S~\ref{sec:no_bound}).

\begin{figure}[ht]
    \centering
\textbf{\scriptsize \qquad \qquad Regularized A1A -- DecSPS Vs Adam \qquad \qquad \quad Regularized A1A -- DecSPS Vs AMSgrad}\\
    \includegraphics[height=0.25\linewidth]{fig/real_new/A1A_Adam.pdf}
    \includegraphics[height=0.25\linewidth]{fig/real_new/A1A_stepsize_Adam.pdf}
\includegraphics[height=0.25\linewidth]{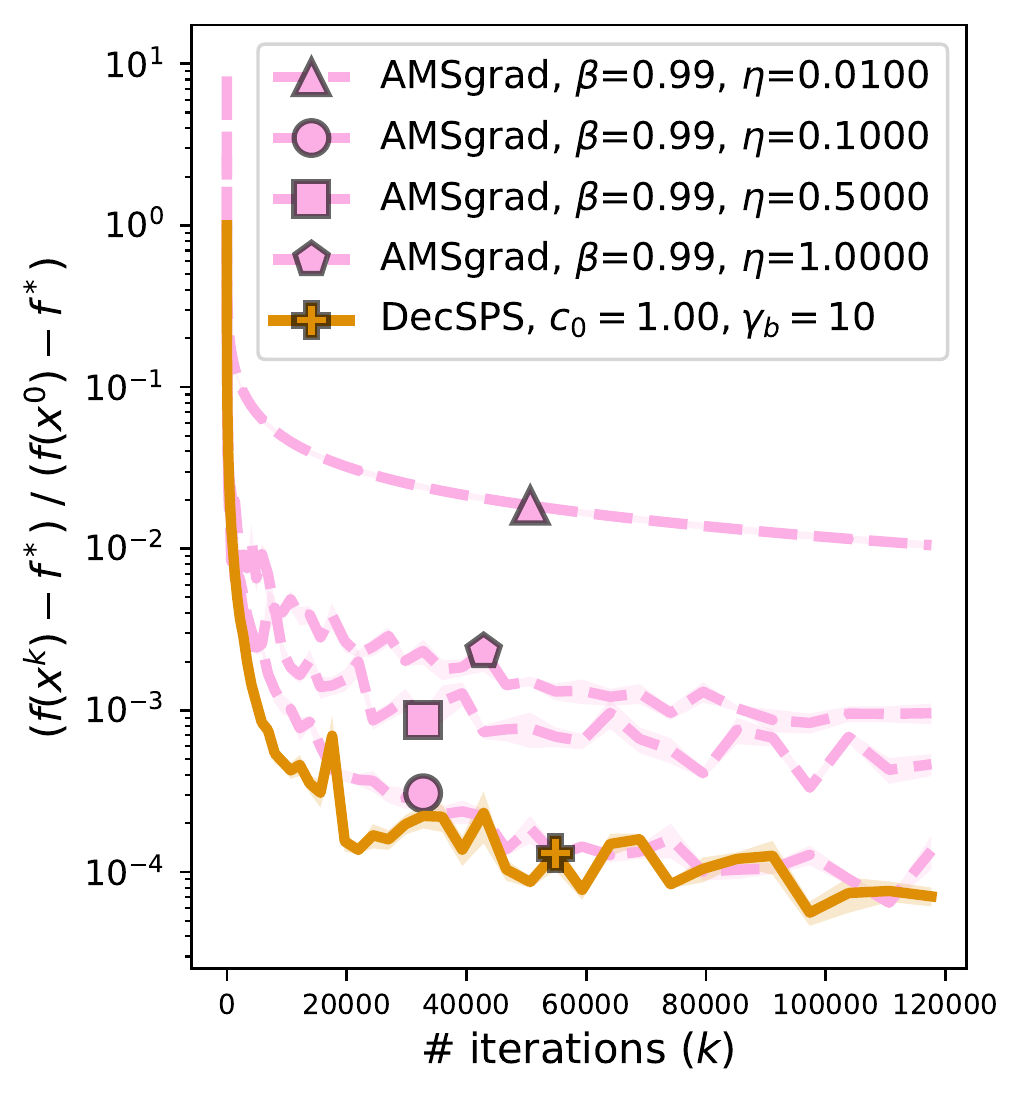}
    \includegraphics[height=0.25\linewidth]{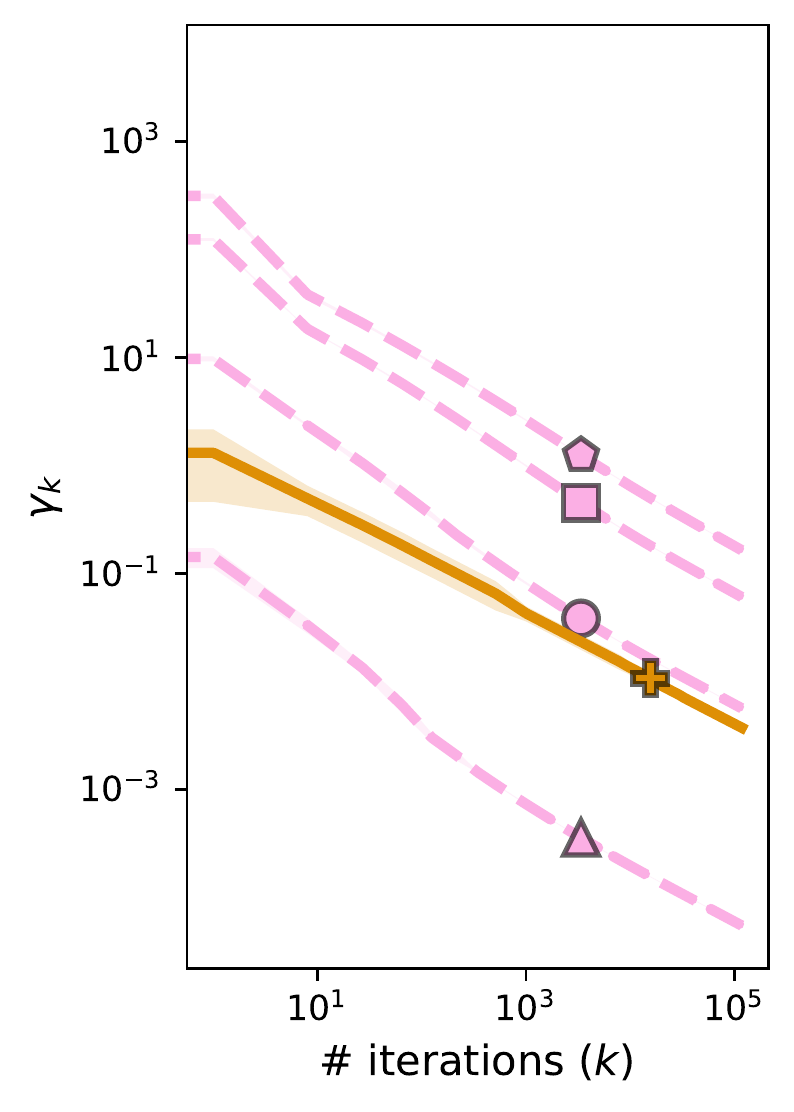}

    \vspace{-2mm}
\caption{\small Performance of Adam~(with fixed stepsize and no momentum) and AMSgrad~(with sqrt decreasing stepsize and no momentum) compared to DecSPS on the A1A dataset. Plotted is also the average stepsize~(each parameter evolves with a different stepsize).}
    \label{Fig_SGDVsAdam_A1A}
\end{figure}

\begin{figure}[h]
    \centering
\textbf{\scriptsize \qquad \qquad Regularized Breast Cancer -- DecSPS Vs Adam \qquad  Regularized Breast Cancer -- DecSPS Vs AMSgrad}\\
    \includegraphics[height=0.25\linewidth]{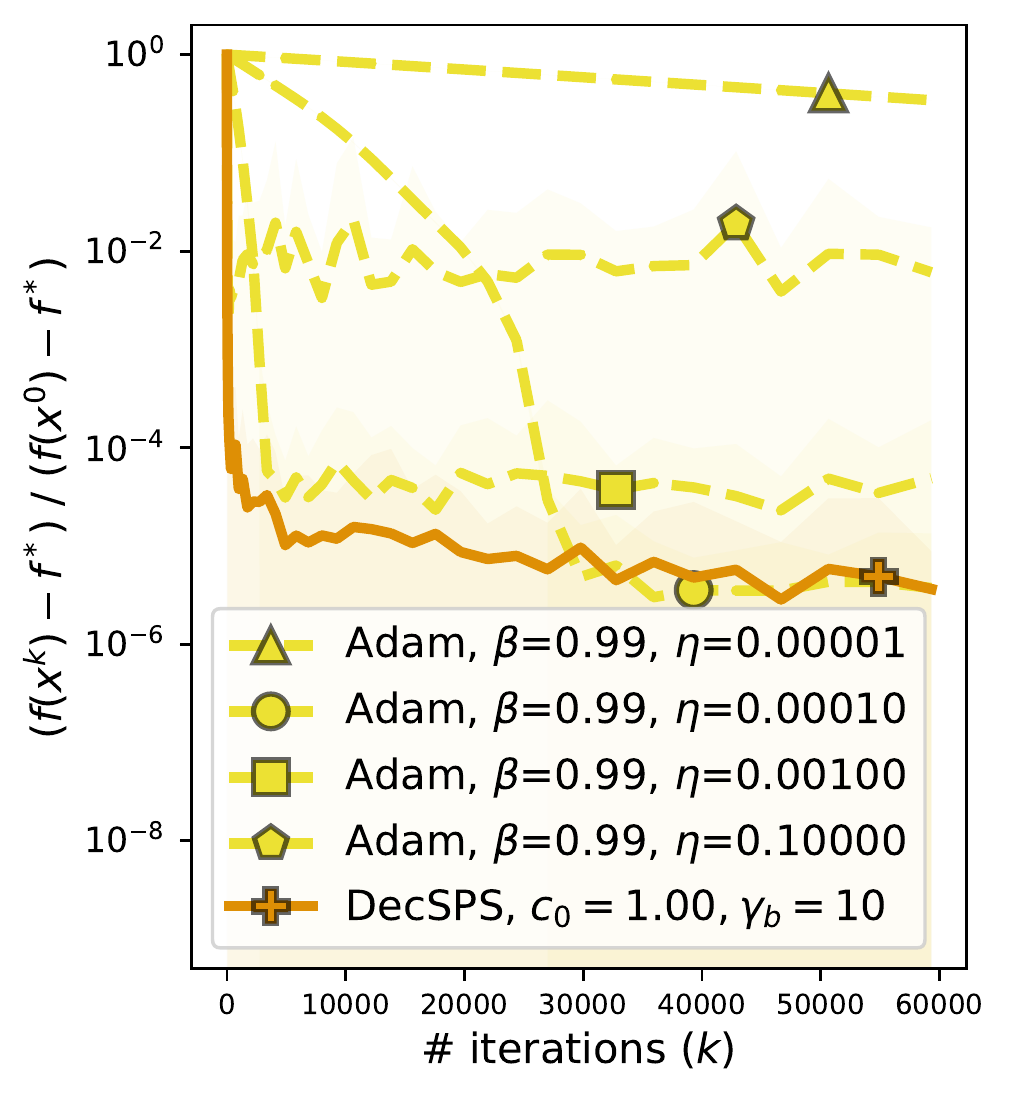}
    \includegraphics[height=0.25\linewidth]{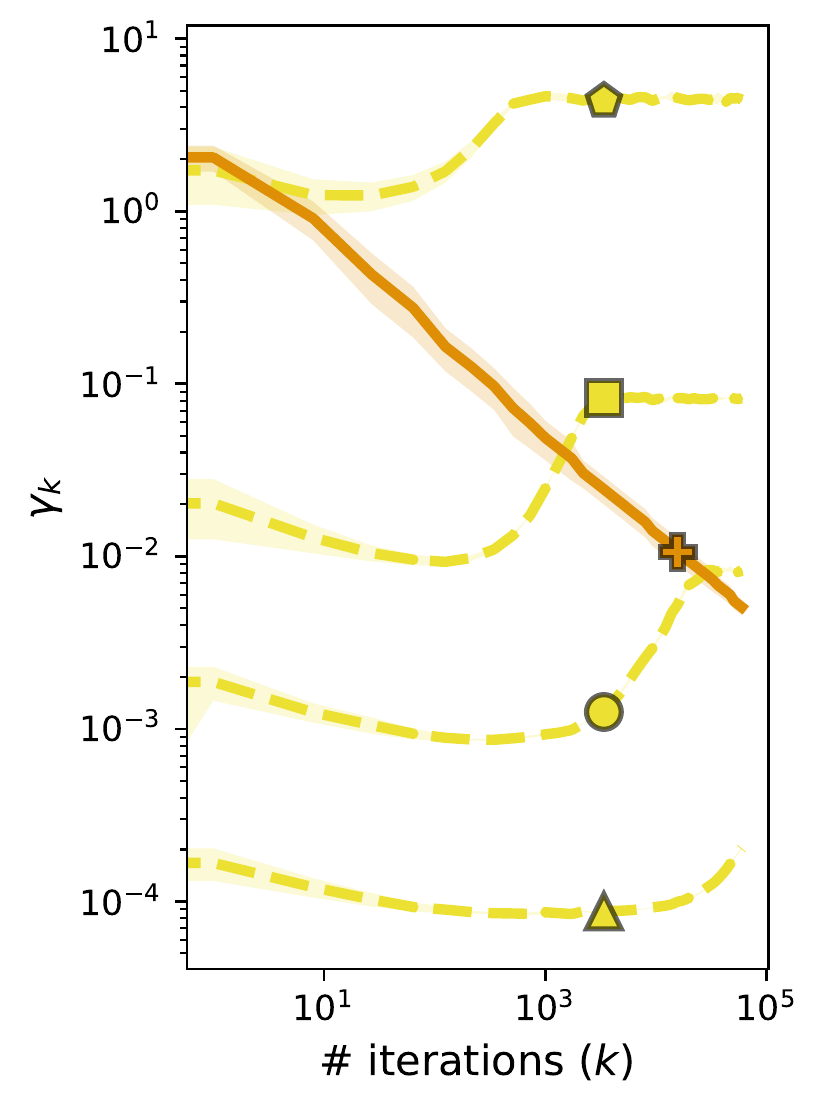}
\includegraphics[height=0.25\linewidth]{fig/real_new/Breast_AMSgrad.pdf}
    \includegraphics[height=0.25\linewidth]{fig/real_new/Breast_stepsize_AMSgrad.pdf}

    \vspace{-2mm}
\caption{\small Performance of Adam~(with fixed stepsize and no momentum) and AMSgrad~(with sqrt decreasing stepsize and no momentum) compared to DecSPS on the Breast Cancer dataset. Plotted is also the average stepsize~(each parameter evolves with a different stepsize).}    \label{Fig_SGDVsAdam_Breast}
\end{figure}

\paragraph{Performance under light regularization.} If the problem at hand does not have strong curvature information, e.g. there is very light regularization, then additional tuning of the DecSPS parameters is required. Figure~\ref{fig_light_reg} shows that it is possible to retrieve the performance of SGD also with light regularization parameters~($1e-4, 1e-6$) under additional tuning of $c_0$ and $\gamma_b$.

 \begin{figure}[h]
     \centering
\textbf{\scriptsize SGD vs DecSPS on A1A with lighter regularization}\\
\includegraphics[width = 0.22\textwidth]{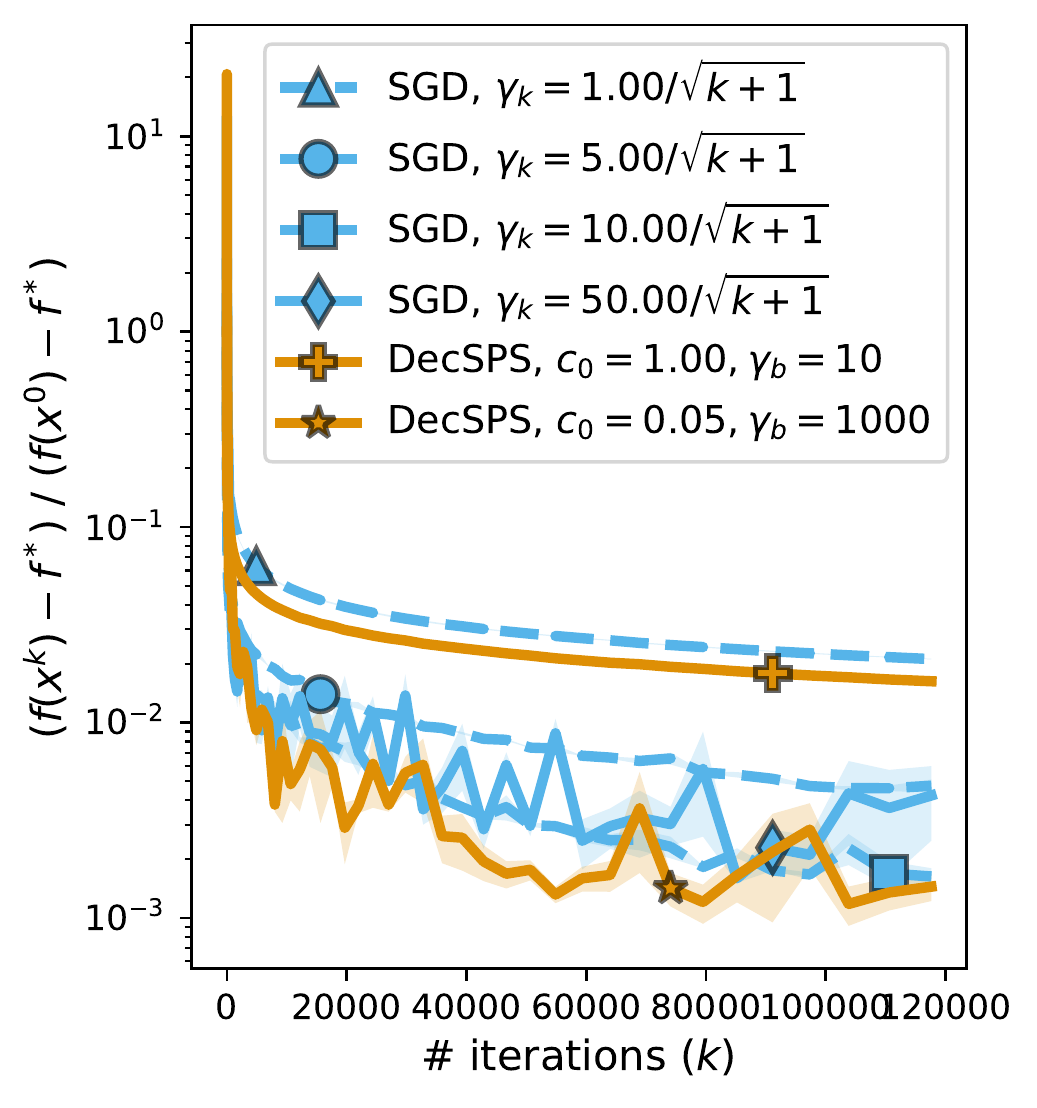}
     \includegraphics[width = 0.22\textwidth]{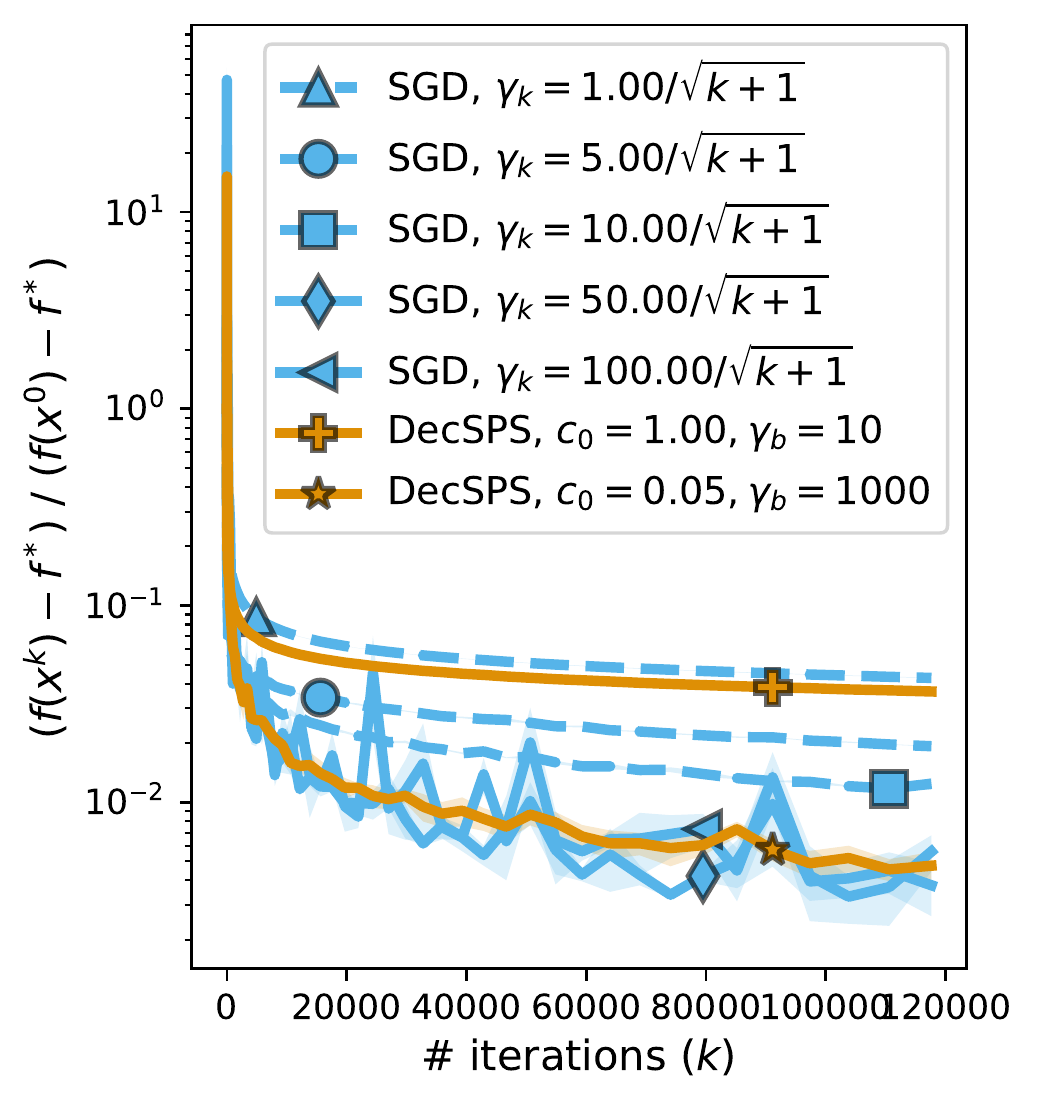}

     \caption{Results on A1A for $\lambda = 1e-4$ (left) and $\lambda = 1e-6$ (right). Additional tuning of SPS is required to match the tuned SGD performance.}
     \label{fig_light_reg}
 \end{figure}

\end{document}